\definecolor{darkblue}{rgb}{0.0,0.0,0.35}
\definecolor{darkgreen}{rgb}{0,0.35,0}
\definecolor{darkred}{rgb}{ 0.8,0,0}
\definecolor{lightgreen}{rgb}{0.75,1,0.75}
\def\fru{{\mathfrak{u}}}
\def\frB{{\mathfrak{B}}}
\def\gc{{g}}
\newtheorem{theorem}{Theorem}
\newtheorem{lemma}[theorem]{Lemma}
\theoremstyle{definition}
\newtheorem{definition}[theorem]{Definition}
\newtheorem{proposition}[theorem]{Proposition}
\newtheorem{remark}[theorem]{Remark}
\numberwithin{equation}{section}
\numberwithin{theorem}{section}
\newcommand{\rd}{{\rm d}}
\newcommand{\bq}{{\mathbf{x}}}
\newcommand{\bK}{{\mathbf{K}}}
\newcommand{\vJ}{{\mathbf{J}}}
\newcommand{\bE}{{\mathbf{E}}}
\newcommand{\bB}{{\mathbf{B}}}
\newcommand{\ba}{{\mathbf{a}}}
\newcommand{\bb}{{\mathbf{b}}}
\newcommand{\bg}{{\mathbf{g}}}
\newcommand{\bh}{{\mathbf{h}}}
\newcommand{\bu}{{\mathbf{u}}}
\newcommand{\bv}{{\mathbf{v}}}
\newcommand{\bx}{{\mathbf{x}}}
\newcommand{\by}{{\mathbf{y}}}
\newcommand{\bU}{{\mathbf{U}}}
\newcommand{\bV}{{\mathbf{V}}}
\newcommand{\bW}{{\mathbf{W}}}
\newcommand{\bX}{{\mathbf{X}}}
\newcommand{\bw}{{\mathbf{w}}}
\def\sfS{{\mathsf S}}
\def\cF{{\mathscr F}}
\def\Fo{\mr F}
\def\Eno{\mr{\overline{E}}}
\def\ep{\varepsilon}
\def\thx{\theta^{\ep}_{\bx}}
\def\thp{\theta^{\ep}_{\bv}}
\def\IC{\Big|_{t=0}}
\def\mo{^{\langle\ep\rangle}}
\def\vu{\mathbf{U}}	
\def\unlm{{\widetilde{\vu}\mo}} 		
\def\un{\overline{\vu}} 		
\def\unl{\widetilde\vu}	
\def\ue{\vu_\ep}	
\def\uee{\vu_{\ep}\mo}
\def\ueen{\vu_{\ep_n}^{\langle\ep_n\rangle}}
\def\vB{\mathbf{B}} 	
\def\Bnlm{{\widetilde{\vB}\mo}} 	
\def\Bn{\overline{\vB}}		
\def\Bnl{\widetilde\vB}
\def\Be{\vB_\ep}	
\def\Bee{\vB_{\ep}\mo}
\def\Been{\vB_{\ep_n}^{\langle\ep_n\rangle}}
\def\fn{\overline{f}}					 	 			
\def\fe{f_\ep}
\def\fb{f }	
\def\ub{\vu}	
\def\Bb{\vB}
\def\fen{f_{\ep_n}}	
\def\uen{\vu_{\ep_n}}		
\def\Ben{\vB_{\ep_n}}
\def\utt{\unlm}		
\def\Btt{\Bnlm}
\def\mR{{\mathbb{R}}}
\def\mT{{\mathbb{T}}}	
\def\mZ{{\mathbb{Z}}}
\def\mQ{{\mathbb{Q}}}
\def\mRp{{\mR^3}}	
\def\mTx{{\mT^3}}
\def\bC{{\mathcal C}}
\def\Cinf{\bC^\infty}
\def\cP{{\mathcal P}}
\def\cR{{\mathcal R}}
\def\cE{{\mathcal E}}
\def\mU{{\mathbbm U}}
\def\sA{{\mathsf A}}
\def\bW{{\mathbf W}}
\def\sK{{\mathsf K}}
\def\Rn{R}
\def\vv{\mathbf{V}}
\def\bX{{\mathbf X}}
\def\xx{\mathbf{X}}
\def\pp{\mathbf{V}}
\def\td{\textnormal{\,d}}
\def\ccdot{\cdot}
\def\lesssim{\stackrel{{}_<}{{}_\sim}}
\def\nc{\nabla_{\bx} \ccdot}
\def\ncx{\nabla_{\bx} \ccdot}
\def\cn{\ccdot\nabla_{\bx}}
\def\cnp{\ccdot\nabla_{\bv}}
\def\nt{\nabla_{\bx}\times}
\def\pa{\partial}
\def\pt{\pa_t}
\def\sss{\scriptscriptstyle}
\def\N{{\sss N}}
\def\dint{\displaystyle \int }
\def\ddt{{\td\over \td t}}
\def\Ettinc{{{\mathcal E}_\textnormal{total}^\textnormal{inc}}}
\newcommand{\Epar}[1]{\cE_\textnormal{par}[#1]}
\def\mr{\mathring}
\newcommand{\CCm}[1]{$#1 \in \bC([0,T]; \bC^m(\mTx;\mR^6))$ \;for all $m\ge0$}
\def\be{\begin{equation} }
\def\ee{\end{equation}}
\def\bpm{\begin{pmatrix}}
\def\epm{\end{pmatrix}}
\newcommand{\mychi}[1]{\chi_{(#1)}}
\newcommand{\intp}[1]{\int_{\mRp} \!#1\!\td^3\bv}
\newcommand{\pair}[1]{\langle #1 \rangle}
\renewcommand{\emph}[1]{\textit{#1}}
\newcommand{\rem}[1]{}
\def\pheq{\phantom{=}}
\def\incL{{L^{2,\textnormal{inc}}}}
\def\scP{{\mathscr P}}
\title[Weak solutions to a hybrid Vlasov-MHD model in plasma physics]{Existence of Global Weak Solutions to a\\Hybrid Vlasov-MHD Model for Magnetized Plasmas}
\author[Bin Cheng]{Bin Cheng}
\address{Department of Mathematics, University of Surrey, Guildford, GU2 7XH, UK}
\email{b.cheng@surrey.ac.uk}
\author[Endre S\"uli]{Endre S\"uli}
\address{Mathematical Institute, University of Oxford, Woodstock Road, Oxford OX2 6GG, UK}
\email{endre.suli@maths.ox.ac.uk}
\author[Cesare Tronci]{Cesare Tronci}
\address{Department of Mathematics, University of Surrey, Guildford, GU2 7XH, UK}
\email{c.tronci@surrey.ac.uk}
\keywords{Global weak solution, Vlasov equation, magnetohydrodynamics}
\subjclass[2010]{35D30, 35Q83, 76W05}
\begin{document}

\begin{abstract}
We prove the global-in-time existence of large-data finite-energy weak solutions to an incompressible hybrid Vlasov-magnetohydrodynamic model in three space dimensions. The model couples three essential ingredients of magnetized plasmas: a transport equation for the probability density function, which models energetic rarefied particles of one species; the incompressible Navier--Stokes system for the bulk fluid; and a parabolic evolution equation, involving magnetic diffusivity, for the magnetic field.  The physical derivation of our model is given. It is also shown that the weak solution, whose existence is established, has nonincreasing total energy, and that it satisfies a number of physically relevant properties, including conservation of the total momentum, conservation of the total mass, and nonnegativity of the probability density function for the energetic particles. The proof is based on a one-level approximation scheme, which is carefully devised to avoid increase of the total energy for the
sequence of approximating solutions, in conjunction with  a weak compactness argument for the sequence of approximating solutions. The key technical challenges in the analysis of the mathematical model are the nondissipative nature of the Vlasov-type particle equation and passage to the weak limits in the multilinear coupling terms.
\end{abstract}

\maketitle

\section{\bf Introduction}\label{sec:intro}

In multiscale dynamics, hybrid kinetic-magnetohydrodynamic (MHD) theory offers the opportunity of a multi-physics modeling approach in which a macroscopic fluid flow is coupled to a kinetic equation incorporating the microscopic dynamics of a particle ensemble. Over the past decades, various hybrid models were formulated for different purposes, ranging from combustion theory \cite{Williams}  to polymeric fluid flows (see, for example, \cite{Constantin}, \cite{Barrett-Suli} and the references therein).

In plasma physics, linear hybrid schemes have now been used over several decades to model the interaction of the MHD bulk fluid with a rarified ensemble of energetic  particles, which in turn affect the overall energy and momentum balance. For example, in tokamak devices, the fusion reactions produce energetic rarefied alpha particles that escape a fluid description and thus require a kinetic treatment. Although the linear theory of hybrid kinetic-MHD schemes has been consolidated by computer simulations and analytical stability results \cite{Cheng2}, its nonlinear counterpart poses several consistency questions, which have only been approached during the last few years \cite{Tronci,TrTaCaMo,HoTr2011}. In particular, while different hybrid schemes are currently used in computer simulations, many of them have been found to lack energy conservation \cite{TrTaCaMo}, thereby generating unphysical instabilities above a certain frequency range. The formulation and analysis of hybrid kinetic-MHD models therefore 
represents a fascinating  research area, in which the analysis of nonlinear features of the  kinetic-MHD coupling necessitates the use of powerful modern mathematical techniques.

\subsection{\bf Mathematical setup}\label{sec:notation}

The physical derivation of the system of partial differential equations (PDE) studied in this paper is postponed to Section \ref{ss:lit}. In its original form, the system is stated in \eqref{rcc-hybrid} which is a model from the so-called current-coupling scheme (CCS), and is an incompressible, dissipative version of  \cite[eqs. (52)--(55)]{Tronci}. 
Here, for simplicity of the exposition, we shall set all positive physical constants appearing in  \eqref{rcc-hybrid}, including the density, to unity, as their specific values do not affect our considerations.

Suppose that $T>0$ and $\mT=\mR/(2\pi\mZ)$. For $t \in [0,T]$, $\bx \in \mTx$ and $\bv\in \mRp$, we shall seek the  3-vector velocity $\vu=\vu(t,\bx)$ of the bulk fluid, the  3-vector magnetic field $\vB=\vB(t,\bx)$, and the scalar probability density function $f(t,\bx,\bv)\ge 0$, which models energetic rarefied  particles of one species.
It will be implicitly understood throughout the paper that all functions of the variable $\bx \in \mTx$ satisfy $2\pi$-periodic boundary conditions with respect to $\bx$, and this property will only be explicitly stated when it is necessary to emphasize it.

The unknown functions $\vu$,
$\vB$ and $f$ are then required to satisfy
the following coupled system of nonlinear PDEs:
\begin{alignat*}{2}
&\pt f+\bv\cdot \nabla_{\bx} f= (( \vu-\bv)\times\vB)\cnp f
&&\quad ~\\
&\pt\vu+(\vu\cdot\nabla_{\bx})\vu+\nabla_{\bx} \mathcal{P}-(\nabla_{\bx} \times\vB)\times\vB- \Delta_{\bx}\vu=\int_{\mRp}{(\vu- \bv)\times\vB f}\td^3\bv
&&\quad \text{(subject to }\nabla_{\bx} \cdot\vu=0), \\
&\pt \vB- \nt(\vu\times\vB)= \Delta_{\bx}\vB&&\quad \text{(subject to }\nc\vB=0),
\end{alignat*}
where the auxiliary variable $\mathcal{P}$ denotes the ``pressure''.
Since $\vu$ and $\vB$ are divergence-free, one can apply the identities \eqref{A7} and \eqref{A6} in Appendix \ref{app:prod} to rewrite
\[(\nt\vB)\times\vB=(\vB\cdot \nabla_{\bx})\vB-{1\over2}\nabla_{\bx}|\vB|^2\,,\qquad \nt(\vu\times\vB)=(\vB\cdot\nabla_{\bx})\vu-(\vu\cdot\nabla_{\bx})\vB. \]
Then, by adding ${1\over2}|B|^2$ to the pressure $\mathcal{P}$, the incompressible Vlasov-MHD system can be restated as follows:
\vspace{-4mm}
\begin{subequations}
\label{eq}
\begin{alignat}{2}
&\label{f:eq}\pt f+\bv\cdot \nabla_{\bx} f= (( \vu-\bv)\times\vB)\cnp f
\rem{\\nonumber& =\nabla_\bv\cdot(( \vu-\bv)\times\vB f)},  &\qquad ~\\
\label{u:eq}&\pt\vu+(\vu\cdot \nabla_{\bx}) \vu-(\vB\cdot\nabla_{\bx})\vB- \Delta_{\bx}\vu=\int_{\mRp}{(\vu-\bv)\times\vB f}\td^3\bv -\nabla_{\bx} \mathcal{P}
&&\qquad (\text{subject to }\nc\vu=0), \\
\label{B:eq}&\pt \vB+(\vu\cdot\nabla_{\bx})\vB-(\vB\cdot\nabla_{\bx})\vu - \Delta_{\bx}\vB=0 &&\qquad (\text{subject to }\nc\vB=0).
\end{alignat}
\end{subequations}
It is this set of PDEs that we shall focus on on this article, subject to the initial conditions
\begin{equation}\label{initial}
\vu(0,\bx) = \mr\vu(\bx), \quad \vB(0,\bx) =\mr \vB (\bx),\quad f(0,\bx,\bv) =\mr f (\bx,\bv) \geq 0,\qquad \bx \in \mTx,\;
\bv \in \mRp,
\end{equation}
together with \textit{$2\pi$-periodic boundary conditions} with respect to $\bx$. 
The initial data  $\mr\vu(\bx)$ and $\mr\vB(\bx) $ are assumed to be {\it divergence-free} and $2\pi$-periodic.

We remark that $\nc\vB=0$ is an invariant of the evolutionary PDE system \eqref{eq}, at least when $\vu$ and $\vB$ have sufficiently many derivatives. Indeed, by taking the divergence of \eqref{B:eq}  and using the identity $   \sum_{j=1}^3(\pa_{x_j}\vu\cdot\nabla_{\bx})B_j-(\pa_{x_j}\vB\cdot\nabla_{\bx})U_j\equiv0$, we have that
\[\pt(\nc\vB)+(\vu\cn)(\nc\vB)-(\vB\cn)(\nc\vu)-\Delta_{\bx}(\nc\vB)=0,\]
so that if $\nc\vu=0$ for all times and the initial magnetic field $\mr\vB$ is divergence free, then $\nc\vB=0$ for all times.  A pressure-like term in the evolution equation for $\vB$ would be trivially constant and is therefore absent from \eqref{B:eq}. For weak solutions, which this article is concerned with, however, the divergence-free invariance of $\vB$ is not immediate; we therefore retain the divergence-free condition in \eqref{B:eq} for clarity. Another reason for explicitly stressing this condition is that our proof of the existence of weak solutions to the PDE system \eqref{eq} involves a sequence of approximate PDE systems to \eqref{eq}, which do not possess this divergence-free invariance. Thus, in these approximate systems, we must require $\vB$ to be divergence-free and add an explicit pressure-like term in the magnetic equation.

\subsection{Main result}\label{ss:result}
Here, we summarize our main result, Theorem \ref{thm:incomp}. 
It states the existence of global-in-time weak solutions to the PDE system \eqref{eq} in a sense to be made precise in Definition \ref{def:wk:inc}. This definition gives a weak formulation of \eqref{eq} along with certain physically relevant properties satisfied by such weak solutions.

Consider the {\it incompressible}, current-coupling scheme of the resistive Vlasov-MHD system \eqref{eq} for $t\in[0,\infty)$, with $\bq$
contained in a three-dimensional torus, $\mathbb{T}^3$, and $\bv$ contained in the whole three-dimensional space, $\mathbb{R}^3$. The given initial data are: the fluid velocity field $\vu(0,\bq)$, the initial magnetic field $\bB(0,\bq)$, which are both divergence-free and (Lebesgue) square-integrable, and the probability density of particles $f(0,\bq,\bv)$, which is pointwise nonnegative, (Lebesgue) integrable and essentially bounded. Suppose also that the initial energy is finite, i.e.,
\[\frac1{2} \int_{\mathbb{T}^3} \! \big(\big|  \vu(0,\bq) \big|^2+\big|  \bB(0,\bq) \big|^2\big)\rd^3\bx+\frac{1}{2}\int_{\mathbb{T}^3}\int_{\mathbb{R}^3}\!   f(0,\bq,\bv)  \left|\bv\right|^2\rd^3\bv\,\rd^3\bx<\infty.\]
Then, there exists a finite-energy global-in-time large-data weak solution $(f(t,\bq,\bv),\vu(t,\bq),  \bB(t,\bq))$ to the system \eqref{rcc-hybrid} for $t\in[0,\infty)$ in the sense of Definition \ref{def:wk:inc}. In particular, the total energy does not exceed its initial value  and the integrability properties assumed on the initial data as above are preserved in the course of evolution in time for all $t>0$.

\subsection{Mathematical literature}\label{ss:literature}

During the past decade several mathematical studies of PDE systems of coupled Navier--Stokes--Vlasov type (with or without the Fokker-Planck term) have been undertaken; the reader is referred, for example, to   \cite{MV2007,V:NS:drag,Yu:inc, Wang:Yu:comp}
and the references therein.  The existence of global weak solutions has been proved in these in several instances, using the key fact that the total energy is nonincreasing,
in conjunction with weak compactness arguments based on moment-estimates for the probability density function $f$. While these techniques have inspired the analysis performed in this paper, there is a significant difference in terms of the formulation: the coupling terms in the existing literature have almost exclusively taken to be of  (linear) drag-force type, which are proportional to $(\vu-\bv)$ or $\big(\vu- {\bK\over n}\big)$, whereas our model includes, instead, nonlinear coupling terms of Lorenz-force type, which is the natural choice from the point of view of plasma physics.

There are also a number of results concerning the existence of weak and classical solutions to Vlasov--Maxwell equations, without coupling to fluid dynamics. The list is long and we shall only mention \cite{DL:vlasov:maxwell} 
for the existence of global weak solutions, \cite{Gl:Sr:ARMA} for the existence of classical solutions {\it under the {\it a priori} assumption} that the plasma density vanishes for high velocities, and \cite[Chapter 5]{Glassey}, which includes a number of additional references on the subject. 
We note that the $\bv$-advection term of the Vlasov--Maxwell system is a constant times  $(\bv\times\bB+{ \bE})\cdot \nabla_\bv f$, so that when the ideal Ohm's law $\bE+\vu\times\bB=0 $ is used, it coincides with its counterpart in  \eqref{f:eq}.

The proof of the existence of global-in-time classical solutions to our hybrid kinetic-MHD system \eqref{eq}, which nonlinearly couples the incompressible Navier--Stokes equations to evolution equations for the magnetic field $\vB$
and the probability density function $f$, involves significant technical difficulties, even if one adopts an {\it a priori} assumption similar to that in \cite{Gl:Sr:ARMA}. Indeed, even for the source-free Navier--Stokes system the proof of the existence of global-in-time classical solutions for arbitrary smooth initial data is lacking, nor is there a counterexample to the breakdown of regularity of classical solutions. Our
study of the hybrid kinetic-MHD system \eqref{eq} therefore concentrates here on the existence of global-in-time weak solutions.

The rest of the article is organized as follows. 
In Section \ref{ss:lit}, we  present a detailed physical derivation of the system \eqref{eq} in the context of plasma physics and modeling. 
Then, we   formally prove   the key property that the total energy is nonincreasing in conjunction with rigorous proofs of moment-estimates in Section \ref{sec:cons}. We construct in Section \ref{sec:mo} a mollified system, using a single-level approximation,   for which   the desired regularity and energy bound are rigorously verified. The existence of a solution to this
approximating system is shown by proving it to be a fixed point of a mollified mapping. We carefully devise this mapping so as to ensure that its fixed point leads to a nonincreasing total energy. Finally in Section \ref{sec:compact}, we employ various compactness techniques to show that a subsequence of the sequence of approximating solutions converges weakly to a weak solution of the original PDE system. Since $f$ is governed by a transport equation without any diffusion, its regularity needs to be studied with particular care. We also address the lack of $L^1$ compactness, which is, to some extent, alleviated by the assumption that the initial datum for $f$ is in $L^r$ for all $r>1$. It is worth mentioning that the spatial $L^6$ integrability of $(\vu,\bB)$ and the trilinear coupling terms in the PDE require the moments $n,\bK$ to have rather high integrability indices, which we believe necessitates the high integrability indices of $f$.

\section{\bf  Plasma modeling and the physical derivation of the main PDE system}\label{ss:lit}

Many different hybrid models are available in the plasma physics literature \cite{PaBeFuTaStSu}, typically depending on whether the ``Hall term'' is retained in    Ohm's Law -- c.f. the comments above \eqref{ohmslaw1}. In the absence of a kinetic component, when the Hall term is neglected, the quasi-neutrality and the inertia-less electron assumptions lead to the most basic MHD fluid equations \cite{Freidberg}. Then, one may or may not consider resistivity effects, thereby obtaining a resistive or an ideal MHD model, respectively.

When the kinetic description of energetic particles is included,  the coupling of MHD to the kinetic component depends on the particular description that is adopted to model the energetic particles. Here, we shall focus on Vlasov-MHD models that neglect the Hall term, since such a treatment is customary in the nuclear fusion and solar physics literature. In this class of models, two main kinetic-MHD coupling schemes are discussed in the literature: the current-coupling scheme (CCS) \cite{BeDeCh,ParkEtAl,PaBeFuTaStSu,ToSaWaWaHo,Todo} and the pressure-coupling scheme (PCS) \cite{Cheng,ParkEtAl,PaBeFuTaStSu}, as they differ by the nature of the coupling terms in the fluid momentum equation. Upon adopting the Vlasov description for  energetic particle kinetics, references  \cite{Tronci,HoTr2011} showed that (in the ideal limit) the CCS conserves energy exactly as a consequence of its variational/Hamiltonian structures, while the PCS (as it appears in the literature) lacks an energy balance, unless extra inertial 
force terms are added to the Vlasov equation. These last terms are produced naturally by the variational/Hamiltonian approach and lead to an entirely new hybrid theory, which is currently under study and was shown to  reproduce Landau damping. The Lyapunov stability of energy-conserving hybrid models has recently been studied in \cite{TrTaMo1,TrTaMo2}, whereas previously proposed nonconservative models are known to exhibit unphysical instabilities \cite{TrTaCaMo}.

While the mathematical approach to kinetic-MHD theories is simplified by the use of the Vlasov equation for the kinetic component, practical computer simulations \cite{BeDeCh,Cheng,ParkEtAl,PaBeFuTaStSu,Todo,ToSaWaWaHo} employ the corresponding drift-kinetic or gyrokinetic approximations \cite{Brizard}: these are low-frequency kinetic equations that are obtained by sophisticated perturbation techniques to average out the fast Larmor gyration  around the magnetic field. While these low-frequency options are the subject of current research in terms of geometric variational methods \cite{BuTr2016}, here we shall consider the general case of full-orbit particle motion, thereby focusing on Vlasov-MHD models. As it was done in \cite{Tronci,HoTr2011,TrTaCaMo}, we shall modify the standard CCS appearing in the plasma physics literature \cite{ParkEtAl,ToSaWaWaHo,PaBeFuTaStSu} by replacing the drift-kinetic equation with the Vlasov equation. In its CCS variant, the set of partial differential equations (PDEs) of the 
Vlasov-MHD model (in the absence of collisional effects) reads in standard  plasma physics notation as follows:
\begin{subequations}\label{cc-hybrid}
\begin{align}
&\frac{\partial f}{\partial t}+\bv\cdot\nabla_{\bx} f+\frac{q_h}{m_h}\left(\bv-\vu\right)\times\bB\cdot\nabla_\bv f=0, \label{cc-hybrid-Vlasov}
\\
\label{cc-hybrid-momentum}
&\varrho\left(\frac{\partial\vu}{\partial
t}+\left(\vu\cdot\nabla_{\bx}\right)\vu\right)= \left(q_hn\, \vu-\,q_h\mathbf{K}+{\mu_0^{-1}\nabla_{\bx}\times\bB}\right)\times\bB
-\nabla_{\bx}{\cP},
\\
&\frac{\partial\varrho}{\partial
t}+\nabla_{\bx}\ccdot\left(\varrho\,\vu\right)=0,
\\
&\frac{\partial\bB}{\partial
t}=\nabla_{\bx}\times\big(\vu\times\bB\big),
\label{cc-hybrid-end}
\end{align}
\end{subequations}
where the operators $\nabla_{\bx}$ and $\nabla_{\bv}$ are understood to be taken with respect to the $\bx$ and $\bv$ variables,
respectively. The prognostic quantities are the probability density of the number of energetic particles, $f(t,\bq,\bv)$, of  dimension $(time)^3/(length)^6$; $\bB(t,\bq)$, denoting the magnetic field; and $\vu(t,\bq)$ and $\varrho(t,\bq)$, denoting the velocity and density of the bulk fluid, respectively.
The derived, diagnostic quantities (which are treated as auxiliary variables  throughout the article) are
$ n(t,\bq):=\int_{\mathbb{R}^3}\!f(t,\bq,\bv)\,\rd^3\bv$ of dimension $1/(length)^3$ denoting the total number of energetic particles per volume, and $ \mathbf{K}(t,\bq):=\int_{\mathbb{R}^3}\!\bv\,f(t,\bq,\bv)\,\rd^3\bv $    of dimension $1/\big[(length)^2\,(time)\big]$ denoting the sum of velocities of energetic particles per volume. Also,
$\mu_0$ denotes  the magnetic constant.
Note that the pressure ${\cP}$ is determined by an equation of state, which we shall assume to be barotropic, so that $\cP=\cP(\varrho)$, although the analysis in the current article is performed on an incompressible model, which is indifferent to the choice of the equation of state. Finally, the physical constants subscripted with $h$ (standing for ``hot'') are all associated with intrinsic properties of the  energetic particle species; in particular,
$q_h,\,m_h$ signify the charge and the mass of a single (energetic) particle, respectively, and  $a_h:={q_h}/{m_h}$ denotes the charge-to-mass ratio.

The total energy Hamiltonian,
\begin{equation} \label{Ham-preMHD}
H(f,\vu,\bB)=\frac1{2}\int_{\mathbb{R}^3} \!\varrho\left|\vu\right|^2\rd^3\bx+\frac{m_h}{2}\int_{\mathbb{R}^3}\int_{ \mathbb{R}^3}\! f \left|\bv\right|^2\rd^3\bv\,\rd^3\bx+\int_{\mathbb{R}^3}\!   \mathcal{U}(\varrho)\,\rd^3\bx
+\frac1{2\mu_0}\int_{\mathbb{R}^3} \left|\bB\right|^2\rd^3\bx,
\end{equation}
is conserved by the dynamics of \eqref{cc-hybrid}. Here, we have assumed a barotropic pressure law $\cP=\cP(\varrho)$, so that the internal energy per unit volume $\mathcal{U}$ depends only on the mass density $\varrho$ and satisfies $\varrho^2\,{\mathcal U}'(\varrho)=\cP(\varrho)$. Its
variational Euler--Poincar\'e and Hamiltonian structures     were   characterized in \cite{HoTr2011}, where conservations of the magnetic helicity $\int_{\mathbb{R}^3}\mathbf{A}\cdot\mathbf{B}\,\rd^3\bx$ and the cross helicity $\int_{\mathbb{R}^3}\vu\cdot\mathbf{B}\,\rd^3\bx$ were also verified explicitly.

The conservative properties of \eqref{cc-hybrid} are no longer true upon the introduction of collisional effects into the model. Such collisional effects are often incorporated in the plasma physics literature via a finite resistivity \cite{Todo} that breaks the so-called  ``frozen-in condition'' \eqref{cc-hybrid-end} (as it is expressed in terms of Lie-dragging, this condition enforces fluid particles on the same magnetic field line to always remain on the same field line).
In this paper, we adopt the same strategy in order to study the existence of weak solutions for a {\it resistive} variant of the system \eqref{cc-hybrid}. More particularly, we shall insert a finite resistivity in the problem so that the  total energy Hamiltonian \eqref{Ham-preMHD} decreases in time.  Although a complete physical treatment would also require incorporating the collisional effects emerging from the energetic particle dynamics, we shall look at a mathematically more tractable case here by only considering collisional effects in the MHD part of the model that are associated with the bulk fluid.

Now we derive the main focus of this article: a   member of the family of resistive Vlasov-MHD   models in its CCS variant. This model has appeared in the work of Belova and collaborators \cite{BelovaPark,BeGoFrTrCr}, who  implemented it in the {\it HYbrid and MHD simulation code} (HYM) to support fusion experiments.

More particularly, we shall focus on obtaining a consistent Ohm's law for the electric field $\bf E$.
The resistive term(s) shall be derived   via a standard procedure of adding collisional terms in the fluid momentum equations. We start with the full system of three sets of equations using the notations introduced between equations \eqref{cc-hybrid} and \eqref{Ham-preMHD}:
\begin{align}
\intertext{$\bullet$ kinetic Vlasov equation for energetic particles:}
\label{multi-fluid-Vlasov}
&\frac{\partial f}{\partial t}+\bv\cdot\nabla_\bx f +a_h\left(\bE+\bv\times\bB\right)\cdot\nabla_\bv f=0;
\\\intertext{$\bullet$ fluid equations for ions ($s=1$) and electrons ($s=2$) with momentum exchange via a friction term ${\mathbf R}$ as   the macroscopic description of collisional effects:}
\label{multi-fluid-momentum}
&\varrho_s\frac{\partial\vu_s}{\partial
t}+\varrho_s\left(\vu_s\cdot\nabla_{\bx}\right)\vu_s =
a_s\varrho_s\left(\bE+\vu_s\times\bB\right)-\nabla_{\bx}{\cP}_s+(-1)^s{\mathbf R},
\\\label{multi-fluid-mass}
&\frac{\partial\varrho_s}{\partial
t}+\nabla_{\bx}\ccdot \left(\varrho_s\vu_s\right)=0;\qquad\text{ and}
\\\intertext{$\bullet$  Maxwell equations:}
&\frac{\partial\bB}{\partial t}=-\nabla_{\bx}\times\bE,\qquad \text{(subject to $\nabla_{\bx}\ccdot \bB=0$)},
\label{faraday}
\\
&\mu_0\epsilon_0\frac{\partial\bE}{\partial
t}=\nabla_{\bx}\times\bB-\mu_0\sum_{s=1}^2
a_s\varrho_s\vu_s-\mu_0\,q_h\mathbf{K},\label{ampere:Max}
\\
&\epsilon_0 \nabla_{\bx}\ccdot \bE=\sum_{s=1}^2 a_s\varrho_s+q_hn;
\label{multi-fluid-gauge}
\end{align}
where the physical constants $q_s,m_s$ for $s=1,2$, just like their counterparts for energetic particles, denote the charge and the mass, respectively, of a single ion ($s=1$) or a single electron ($s=2$) and $a_s:=q_s/m_s$ denotes the charge-to-mass ratio. By the nature of all relevant physical settings, we have $m_h$ and $m_1$ at the same scale and $m_2$ extremely small. Then, by the fact that the energetic particles are very rarefied, we have the scaling regime
\be\label{rare:scale}
m_1n/\varrho_1\ll 1.\ee
Since $q_h,q_1,q_2$ are all at the same scale, the above relation implies $q_hn\ll a_1\varrho_1$, which will be directly used later.

The opposite signs of the collisional/frictional term ${\mathbf R}$ in the two momentum equations ensure conservation of the total momentum. The detailed derivation of ${\mathbf R}$ starting from particle or kinetic description is beyond the scope of this article, and we only refer to \cite[(2.17)]{Callen}, \cite[(3.105)]{Hut2001:plasma} and state that it is of the form
$${\mathbf R}=\nu\varrho_2(\vu_1-\vu_2),\qquad \nu=C_{\mathbf R}\varrho_1,$$
where the positive parameter $\nu$ is the  Maxwellian-averaged electron-ion collision frequency (despite its name, $\nu$ is in fact the average momentum {\it relaxation} rate for the slowly changing Maxwellian distribution of {\it electrons}). Also, the positive parameter $C_{\mathbf R}$ can be well approximated by a constant for barotropic flows since both \cite[(2.17)]{Callen} and \cite[(3.105)]{Hut2001:plasma} show that the electron-ion collision frequency $\nu$ is proportional to the number of ions
per unit volume, which is, apparently, proportional to the ion density $\varrho_1$. The factor $(\vu_1-\vu_2)$ in the formula for ${\mathbf R}$ is also consistent with the fact that the (macroscopic) collisional effects are determined by the  (macroscopic) drift velocity of the electrons relative to the ions.

We will later insert kinematic viscosity as part of the standard Navier--Stokes equations, which accounts for collisions amongst ions. Here, we have made the assumption that ions and electrons do not collide with the  energetic particles, which are themselves  assumed to be also collisionless. Although this assumption may not be completely justified, we shall pursue this direction to simplify the problem as much as possible. Further, we perform the same approximations as in standard  MHD theory \cite{Freidberg}. First, the enormous disparity in masses $m_1\gg m_2$   allows us to approximate \eqref{multi-fluid-momentum} with $s=2$ by neglecting the left-hand side terms, resulting in
\begin{equation}
\mathbf{0}=
a_2\varrho_2\left(\bE+\vu_2\times\bB\right)-\nabla_{\bx}{\cP}_2+ C_{\mathbf R}\varrho_1\varrho_2(\vu_1-\vu_2).
\label{eleq1}
\end{equation}
Then, adding this to \eqref{multi-fluid-momentum} with $s=1$ produces
\begin{equation}\label{TotMom}
\varrho_1\frac{\partial\vu_1}{\partial
t}+\varrho_1\left(\vu_1\cdot\nabla_{\bx}\right)\vu_1 =
\sum_{s=1}^2
a_s\varrho_s\bE+\sum_{s=1}^2
a_s\varrho_s\vu_s\times\bB-\nabla_{\bx}({\cP}_1+{\cP}_2),
\end{equation}
where the collisional terms $\pm {\mathbf R}$ cancel out. Next, upon assuming quasi-neutrality by the formal limit $\epsilon_0\to0$ that is applied to the Maxwell's equations {\eqref{ampere:Max}}, \eqref{multi-fluid-gauge}, the electromagnetic fields satisfy the equations
\begin{align}\label{ampere}
\text{zero displacement current: }\qquad&
a_1\varrho_1\vu_1+a_2\varrho_2\vu_2+\,q_h\mathbf{K}=\frac1{\mu_0}\nabla_{\bx}\times\bB\quad(=\vJ)
,\\
\text{quasi-neutrality: }\qquad&
a_1\varrho_1+ a_2\varrho_2+q_hn=0
\label{gauss}
,
\end{align}
{where $\vJ:=\mu_0^{-1}\nabla_{\bx}\times\bB$ denotes the electric current density in the system, and is henceforth always an auxiliary, diagnostic variable.} Then, the ion momentum equation \eqref{TotMom} becomes (with ${\cP}={\cP}_1+{\cP}_2$)
\begin{equation}
\varrho_1\frac{\partial\vu_1}{\partial
t}+\varrho_1\left(\vu_1\cdot\nabla_{\bx}\right)\vu_1 = -q_hn\,\bE+\left(\vJ-\,q_h\mathbf{K}\right)\times\bB
- \nabla_{\bx}{\cP}
\label{CCSmomentum}
.
\end{equation}
Thus we have now reduced the two-fluid model  \eqref{multi-fluid-momentum}, \eqref{multi-fluid-mass} to a single-fluid model, for which we   retain the continuity equation only for $s=1$ as well. One can then combine these with the kinetic equation \eqref{multi-fluid-Vlasov}, Faraday's law \eqref{faraday} and use the elementary identities listed in Appendix \ref{app:prod}
to formally prove conservation of the total momentum:
\[\int_{\mathbb{R}^3}\!\Big(\varrho_1\vu_1+\int_{\mathbb{R}^3}\!\bv f\td^3\bv\Big)\td^3\bx\]
and can also formally deduce the rate of change of the total energy; indeed, by considering the Hamiltonian $H$ defined in \eqref{Ham-preMHD}, we have that
\be\label{ddt:H}\begin{aligned}\ddt H(f,\vu_1,\bB)&=-\int_{\mathbb{R}^3}\!\Big(\big(\vJ-q_h\bK+q_hn\,\vu_1)\ccdot\bE+\big( \vJ-q_h\bK\big)\ccdot\big(\vu_1\times\bB\big)\Big)\td^3\bx\\
    &=-\int_{\mathbb{R}^3}\!\Big(\big(\vJ-q_h\bK+q_hn\,\vu_1)\ccdot\big(\bE+ \vu_1\times\bB\big)\Big)\td^3\bx.
   \end{aligned}
\ee

Next, in order to relate the electric field $\bE$ to the prognostic unknowns and thus close the system, we combine   Amp\`ere's current balance \eqref{ampere} and quasi-neutrality \eqref{gauss} to obtain
\be
\vu_2=  \frac{-1}{a_1 \varrho_1+q_hn}\big( \vJ-q_h\mathbf{K} -a_1\varrho_1\vu_1\big),
\ee
so that, by simple manipulation,
\be\label{vu1:vu2}
\vu_1-\vu_2=   \frac{1}{a_1 \varrho_1+q_hn}\,\left(\vJ-{q_h}\mathbf{K}   +q_hn\,\vu_1\right).\ee

On the other hand, by the identity $\bE+\vu_1\times\bB= (\vu_1-\vu_2)\times\bB+(\bE+\vu_2\times\bB)$ and the inertia-less  electron momentum equation \eqref{eleq1},
\begin{align*}
\bE+\vu_1\times\bB
=&\
(\vu_1-\vu_2)\times\bB+\frac{1}{a_2}\varrho_2\nabla_{\bx}{\cP}_2-\frac{C_{\mathbf R}\varrho_1}{a_2}\left(\vu_1-\vu_2 \right),
\end{align*}
and therefore, upon substituting \eqref{vu1:vu2}, we have
\begin{align*}
&\bE+\vu_1\times\bB\\
&=
\frac{1}{a_1 \varrho_1+q_hn}\big(\vJ- q_h\mathbf{K} +q_hn\,\vu_1\big)\times\mathbf{B}
-\frac{1}{a_1\varrho_1+q_hn}\nabla_{\bx}{\cP}_2
-\frac{C_{\mathbf R}\varrho_1}{a_2  (a_1 \varrho_1+q_hn)}\left(\vJ-{q_h}\mathbf{K}   +q_hn\,\vu_1\right).
\end{align*}
Then, we imitate the derivation of ideal MHD  \cite{Freidberg} and assume that the ``Hall effect'' $\vJ\times\mathbf{B}$ and electron pressure gradient $\nabla_{\bx}{\cP}_2$ are both negligible compared to the  Lorentz force $a_1\varrho_1\vu_1\times\mathbf{B}$. This step leads to the relation
\begin{equation}\label{ohmslaw1}
\bE
+\vu_1\times\bB=\frac{q_h}{a_1 \varrho_1+q_hn}\big(n\,\vu_1- \mathbf{K} \big)\times\mathbf{B}-\frac{C_{\mathbf R}\varrho_1}{a_2  (a_1 \varrho_1+q_hn)}\left(\vJ+q_hn\,\vu_1-{q_h}\bK  \right).
\end{equation}
Then, thanks to the assumption \eqref{rare:scale} for energetic particles (so that $\frac{1}{a_1 \varrho_1+q_hn}\simeq\frac1{a_1\varrho_1}$) and the fact that $a_1,a_2$ have opposite signs, we are justified to consider the resistivity $\eta:= -C_{\mathbf R}/(a_2  a_1 )$ to be a positive constant.
Compared to $\vu_1\times\bB$ on the left-hand side of \eqref{ohmslaw1}, the term $\frac{q_h}{ a_1 \varrho_1+q_hn}\,(n\,\vu_1 )\times\mathbf{B}$ on the right-hand side is negligible because of \eqref{rare:scale}. Analogously, upon introducing the average particle velocity\footnote{This quantity is either very low or at most  comparable with the MHD fluid velocity $\vu_1$.  This is  consistent with the hypothesis of energetic particles,  since  the latter hypothesis involves the temperature rather than the mean velocity. Denoting the temperatures of the hot and fluid components by $T_h$ and $T_f$, respectively, we have   $T_h\gg T_f$ (see \cite{Cheng}).  With the  definition of the temperature  $T_h=\left(m_h/3nk_B\right){\int_{\mathbb{R}^3}|\mathbf{v}-\boldsymbol{W}|^{2} f\,{\rm d}^3\mathbf{v}}$ (where $k_B$ denotes Boltzmann's constant), the assumption on  the  energetic component amounts to an assumption on the trace of the second-order moment of the Vlasov density with no assumption on the mean velocity,
which is actually low for hot particles close to isotropic equilibria.}
$\boldsymbol{W}=\mathbf{K}/n\lesssim\vu_1$,  the term $\frac{q_hn}{ a_1 \varrho_1+q_hn}\big( - \boldsymbol{W} \big)\times\mathbf{B}$ is also seen to be negligible. Therefore, we are left with the following formula,  which we shall refer to as ``extended Ohm's law'' (c.f. \cite{Todo}):
\begin{equation}\label{ohmslaw2}
\bE+\vu_1\times\bB=\eta\left(\vJ+q_hn\,\vu_1-{q_h}\bK  \right).
\end{equation}
Although \eqref{ohmslaw2} consistently guarantees that the rate of change \eqref{ddt:H} for the total energy is non-positive, the complicated form of the extended Ohm's law  leads to significant difficulties in the mathematical
analysis of the model. Thus, we have simplified the problem by invoking, once again,
the assumption \eqref{rare:scale} for energetic particles
to obtain  the usual form of Ohm's law:
\be
\bE+\vu_1\times\bB=\eta\,\vJ.
\label{Ohmslaw}
\ee
However, a {\it consistency issue} emerges here: this approximation does not guarantee the nonpositivity of  the time rate  \eqref{ddt:H} for the total energy.
In order to progress further, we make one additional approximation:  we neglect   {\it all} resistive force terms  in the ion momentum equation and kinetic equation, namely we use the ideal Ohm's law   $\bE+\vu_1\times\bB=0$  in \eqref{CCSmomentum} and  \eqref{multi-fluid-Vlasov}, but we use the usual Ohm's law \eqref{Ohmslaw} in    Faraday's Law \eqref{faraday}. As has been noticed in \cite{BeJaJiYaKu}, this step is needed for momentum conservation and it amounts to defining an effective electric field given by ${\bf E}-\eta{\bf J}$, where $-\eta{\bf J}$ represents the collisional drag on the ions and the hot particles. Then, this results in an approximation of the kinetic equation \eqref{multi-fluid-Vlasov}, Faraday's law \eqref{faraday} and the ion momentum equation \eqref{CCSmomentum} by the following current-coupling scheme of resistive Vlasov-MHD (where $\vu=\vu_1$, and we also incorporate the kinetic viscosity $\kappa$,  incompressibility and the constant ion density $\bar{\varrho}$):
\begin{subequations}\label{rcc-hybrid}
\begin{align}
&\frac{\partial f}{\partial t}+\bv\cdot\nabla_\bx f +\frac{q_h}{m_h}\left(\bv-\vu\right)\times\bB\cdot\nabla_\bv f=0, \label{rcc-hybrid-Vlasov}
\\\label{rcc-hybrid-momentum}
&\bar{\varrho}\Big(\frac{\partial\vu}{\partial
t}+\left(\vu\cdot\nabla_{\bx}\right)\vu\big)= q_hn \,\vu\times\bB+\left(\vJ-\,q_h\mathbf{K} \right)\times\bB\nonumber\\
&\hspace{4cm} +\kappa\Delta_{\bx}\vu-\nabla_{\bx}{\cP}\,,\qquad\  \text{(subject to $\nabla_{\bx}\cdot\vu=0$)},
\\
& \frac{\partial\bB}{\partial
t}=\nabla_{\bx}\times\left(\vu\times\bB\right) +{\eta\over\mu_0}\Delta_{\bx}\bB
,\hspace{2.5cm}  \text{(subject to $\nabla_{\bx}\cdot\bB=0$)}.
\label{rcc-hybrid-end}
\end{align}
\end{subequations}
Here the unknowns are $\vu(t,\bq)$, $f(t,\bq,\bv)$, $\bB(t,\bq)$, and the auxiliary variables involved are $\cP$ and, as we have defined before,
$$\vJ=\mu_0^{-1}\nabla_{\bx}\times\bB,\quad
n(t,\bq)=\int_{\mathbb{R}^3}\!f(t,\bq,\bv)\,\rd^3\bv,\quad \mathbf{K}(t,\bq)=\int_{\mathbb{R}^3}\!\bv\,f(t,\bq,\bv)\,\rd^3\bv.$$
The symbols  $q_h,\kappa,m_h,\eta,\mu_0$ denote  positive physical constants.

This system of hybrid Vlasov-MHD equations is implemented in the HYM code, as has been recently presented in \cite{BeGoFrTrCr}. The remainder of this paper is devoted to an analytical study of its parameter-free version \eqref{eq}. This system exhibits conservation of total momentum and nonincreasing total energy, thanks to a calculation similar to the one leading to \eqref{ddt:H}, which will be discussed in Section \ref{sec:cons}. These physical properties, in fact, play a crucial role in our mathematical analysis of the model.

\section{\bf Conservation properties and bounds on the moments}\label{sec:cons}

In this section, assuming sufficient regularity of the solution to system  \eqref{eq}, and $2\pi$-periodic boundary conditions with respect to $\bx$ and suitably rapid decay of $f$ as $|\bv| \rightarrow \infty$, we shall present  \emph{formal} proofs of various balance laws and energy inequalities. Although subsequently we shall study the system \eqref{eq} only, corresponding to the incompressible case, it is instructive at this point to discuss the (formal) energy equality in the compressible case as well. The argument   in the incompressible case will be made rigorous later on in the paper by fixing the function spaces in which the unknown functions $f$, $\bU$ and $\bB$ are sought. The question of existence of a global weak solution to the {\it compressible} model will be studied elsewhere.

We also show in Proposition \ref{prop:Lr} that, at any time $t\ge0$, the $L^r(\mTx)$ norm (with a suitable values of $r$, whose choice will  be made clear below,) of the moments of $f$ are bounded in terms of the total energy and the $L^\infty(\mTx \times \mRp)$ norm of $f$, both of which will later be rigorously shown not to exceed their respective initial sizes.

The equation  \eqref{f:eq} is a transport equation with divergence-free ``velocity fields'' with respect to both the $\bx$  and $\bv$ coordinates. That is to say,
$$\nabla_{\bx}\ccdot \bv =\nabla_{\bv}\ccdot((\bv-\vu)\times\vB)=0.$$
As a consequence, the $L^r(\mTx \times \mRp)$ norm of $f$ is constant in time for all $r\in[1,\infty)$. In addition, one can show by the method of characteristics that the minimum and maximum values of $f$ are preserved in the course of temporal evolution, and therefore the $L^\infty(\mTx \times \mRp)$ norm of $f$ is also constant in time; consequently, for a nonnegative initial datum $\mr f$  the associated solution $f$ remains nonnegative in the course of evolution in time.

It is also straightforward to show (formally) the conservation of the total momentum
\[\int_{\mTx} \Big[\Big(\int_{\mRp}\bv f\td^3\bv\Big)+ \vu\Big] \td^3\bx\]
using the elementary identities from Appendix \ref{app:prod}.

There are three contributions to the total energy of the system: from energetic particles, from the kinetic  energy of the bulk fluid, and from the magnetic field. The total energy is therefore defined as follows:\footnote{ The electric field $\bE$ also stores energy, but with our scalings here its contribution is neglected. In order to justify this, we return to physical units and consider linear materials with homogeneous permittivity $\ep$ and permeability $\mu$, so that $\mathbf{D}=\ep \bE$ and $\mathbf{H}=\mathbf{B}/\mu$. In MHD models, Faraday's law $\pt\vB+\nt \bE=0$ is used, which implies the scaling law
\(
{[\vB]\over [t]}\sim {[\bE]\over [\bx]}.
\)
Meanwhile, the MHD approximation adopts the zero displacement-current limit of the Maxwell--Amp\'ere equation $\nt \mathbf{H}-\mathbf{J}= \pt \mathbf{D}\approx \mathbf{0}$, which implies another scaling law:
\(
{[\mathbf{H}] \over [\bx]}\gg { [\mathbf{D}]\over [t]}.
\)
Multiplying these two scaling laws we obtain
\[
[\mathbf{B}][\mathbf{H}]  \gg[\bE][\mathbf{D}].
\]
Therefore, the contribution of the electric field to the total electromagnetic energy density,  ${1\over2}\left(\bE\cdot \mathbf{D} + \mathbf{B}\cdot \mathbf{H} \right)$, is negligible.}
\[\Ettinc=\Ettinc[f,\vu,\vB]:=\int_{\mTx} \bigg[\bigg(\int_{\mRp}{1\over2} |\bv|^2 f\td^3\bv\bigg)+{1\over2} |\vu|^2 +{1\over2}|\vB|^2\bigg]\td^3\bx.
\]
Assuming that
basic physical laws are obeyed, we must have conservation of the total energy when all dissipation terms are set to zero.
In order to illustrate the energy budget and the energy exchange between the equations in the system, we introduce the following \emph{energy conversion rates}:
\begin{alignat*}{2}
&\cR_1:=\dint_{\mTx}\bigg[( \vu\times\vB)\ccdot{\int_{\mRp} \bv f\td^3\bv}\bigg]\td^3\bx&&\quad\text{(energy of the   particles to kinetic energy of the fluid);}\\
&\cR_2:=\dint_{\mTx}(\nt\vB)\ccdot( \vu\times\vB)\td^3\bx&&\quad\text{(kinetic   energy  of the   fluid to   magnetic energy).}\\
\end{alignat*}
We shall now decompose ${\ddt}\Ettinc $ into its three constituents in order to highlight the roles of these energy conversion rates.
\begin{enumerate}[(i)]
\item \label{en:exchange:1}
\textit{Change in the total energy of energetic particles.}
Integrating ${1\over2}|\bv|^2\ccdot\eqref{f:eq}$ over $\mTx\times\mRp$ and performing integration by parts yields
\[
\begin{aligned}
{\ddt}\int_{\mTx \times \mRp}{1\over2} |\bv|^2 f\td^3\bx\td^3\bv&=\int_{\mTx \times \mRp}{1\over2} |\bv|^2\, \nabla_{\bv}\ccdot((\vu-\bv)\times\vB f)\td^3\bx\td^3\bv\\
&=-\int_{\mTx \times \mRp} \bv \ccdot((\vu-\bv)\times\vB f)\td^3\bx\td^3\bv\\
&=- \int_{\mTx}\Big(\int_{\mRp} \bv f\td^3\bv\Big)\ccdot(\vu\times\vB)\td^3\bx\\
& =-\cR_1.
\end{aligned}
\]

\item \textit{Change in the kinetic energy of the bulk fluid.}
Integrating $\eqref{u:eq}\ccdot \vu$ over $\mTx$ and performing integration by parts yields
\[
\begin{aligned}
&\pheq{\ddt}\int_{\mTx} {1\over2} |\vu|^2\td^3\bx\\
&\quad= -\int_{\mTx}(\nt\vB)\ccdot( \vu\times\vB)\td^3\bx - \int_{\mTx} |\nabla_{\bx}\vu|^2+ (\nc\vu)^2 \td^3\bx+\int_{\mTx}( \vu\times\vB)\ccdot\Big({\int_{\mRp} \bv f\td^3\bv}\Big)\td^3\bx\\
&\quad=-\cR_2- \int_{\mTx} |\nabla_{\bx}\vu|^2\td^3\bx+\cR_1.
\end{aligned}
\]
\item \textit{Change in the magnetic energy.}
By integrating $\eqref{B:eq}\ccdot \vB$ over $\mTx$, and using the identity  $\nc(\vu'\times\vB)=(\nt\vu')\ccdot\vB-(\nt\vB)\ccdot\vu'$ with $\vu':=\vu\times\vB$ (cf. \eqref{A4}) we obtain, after integrating by parts, that
\[
{\ddt}\int_{\mTx} {1\over2}|\vB|^2\td^3\bx=\int_{\mTx}(\nt\vB)\ccdot(\vu\times\vB)\td^3\bx- \int_{\mTx} |\nabla_{\bx}\vB|^2\td^3\bx=\cR_2- \int_{\mTx}  |\nabla_{\bx}\vB|^2\td^3\bx.
\]
\end{enumerate}

To conclude, for a smooth solution $(f,\vu,\vB)$ to \eqref{eq},
with $f$ decaying sufficiently rapidly as $|\bv|\to\infty$,  we have that
\be
\label{ddt:Einc}
\Ettinc (t) = \Ettinc (0)-\int_0^t \int_{\mTx}\left( |\nabla_{\bx}\vu|^2+   |\nabla_{\bx}\vB|^2\right)\td^3\bx\td s\qquad \forall\,t \in [0,T],
\ee
as well as
\be
\label{f:Lr}\|f(t)\|_{L^r(\mTx \times \mRp)}=\|f_0\|_{L^r(\mTx \times \mRp)}\qquad \forall\, t \in [0,T], \quad \forall\, r \in[1,\infty].
\ee

These bounds on $f$ then imply the relevant bounds on the moments of $f$ in the following sense.
\begin{proposition}\label{prop:Lr}
Consider a measurable nonnegative function $f : (t,\bx,\bv)\in [0,T]\times \mTx \times \mRp \mapsto f(t,\bx,\bv)\in\mR$ such that  $\|f(t,\cdot,\cdot)\|_{L^\infty(\mTx \times \mRp)}<\infty$ for $t \in [0,T]$, and assume that
\be
\label{def:Ef}
\Epar f(t):=\int_{\mTx \times \mRp}{1\over2} |\bv|^2 f(t,\bx,\bv)\td^3\bx\td^3\bv < \infty\qquad
\mbox{for $t \in [0,T]$}.
\ee
Then, the following bounds on the zeroth, first and second moment of $f$ hold for $t \in [0,T]$:
\begin{align}
\label{bd:p0f}
&\Big\|\int_{\mRp} f(t,\cdot,\bv)\td^3\bv\Big\|_{L^{5\over3}(\mTx)}\le C\|f(t,\cdot,\cdot)\|_{L^\infty(\mTx \times \mRp)}^{2\over5}\,\Big(\Epar {f}(t)\Big)^{3\over5}; \\
\label{bd:pf}
&\Big\|\int_{\mRp} |\bv|\,f(t,\cdot,\bv)\td^3\bv\Big\|_{L^{5\over4}(\mTx)}\le C\|f(t,\cdot,\cdot)\|_{L^\infty(\mTx \times \mRp)}^{1\over5}\,\Big(\Epar {f}(t)\Big)^{4\over5};\\
\label{bd:ps}
&\Big\|\int_{\mRp} |\bv|^2 f(t,\cdot,\bv)\td^3\bv\Big\|_{L^{1}(\mTx)}\le C \Epar {f}(t).
\end{align}
More generally,  for any real number $k\in[0,2]$ and $t \in [0,T]$, we have that
\be
\label{bd:pkf}
\Big\|\int_{\mRp} |\bv|^k f(t,\cdot,\bv)\td^3\bv\Big\|_{L^{5\over3+k}(\mTx)}\le C\|f(t,\cdot,\cdot)\|_{L^\infty(\mTx \times \mRp)}^{2-k\over5}\,\Big(\Epar {f}(t)\Big)^{3+k\over5}.
\ee
\end{proposition}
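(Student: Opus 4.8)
\emph{Proof proposal.} The plan is to prove the general estimate \eqref{bd:pkf} directly, since the three displayed bounds \eqref{bd:p0f}, \eqref{bd:pf} and \eqref{bd:ps} are exactly its instances $k=0,1,2$. The method is the classical velocity-space interpolation used for moments of kinetic densities: at each fixed $(t,\bx)$ one splits the $\bv$-integral at a radius $R>0$ into a small-velocity part, controlled by the $L^\infty$ bound on $f$, and a large-velocity part, controlled by the kinetic-energy weight $|\bv|^2$; optimizing in $R$ then interpolates between $\|f\|_{L^\infty}$ and the local energy density. Throughout I abbreviate $m:=\|f(t,\cdot,\cdot)\|_{L^\infty(\mTx \times \mRp)}$ and $g(\bx):=\int_{\mRp}|\bv|^2 f(t,\bx,\bv)\,\td^3\bv$; we may assume $m>0$, since otherwise $f$ vanishes a.e.\ and all bounds are trivial.

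First I would establish a pointwise-in-$\bx$ estimate. Fixing $t$ and $\bx$, for any $R>0$ I write
\[
\int_{\mRp}|\bv|^k f\,\td^3\bv=\int_{|\bv|\le R}|\bv|^k f\,\td^3\bv+\int_{|\bv|> R}|\bv|^k f\,\td^3\bv.
\]
On the first region I bound $f\le m$ and use $\int_{|\bv|\le R}|\bv|^k\,\td^3\bv=c_k R^{k+3}$ (a finite constant since $k+3>0$), giving $c_k\,m\,R^{k+3}$. On the second region the constraint $k\le 2$ is used decisively: since $k-2\le 0$, one has $|\bv|^k=|\bv|^2|\bv|^{k-2}\le R^{k-2}|\bv|^2$ for $|\bv|>R$, whence that part is at most $R^{k-2}g(\bx)$. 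Thus pointwise $\int_{\mRp}|\bv|^k f\,\td^3\bv\le c_k m R^{k+3}+R^{k-2}g(\bx)$. Choosing $R$ to balance the two terms, i.e.\ $R^5\sim g(\bx)/m$, both contributions become comparable to $m^{(2-k)/5}g(\bx)^{(k+3)/5}$, and I obtain
\[
\int_{\mRp}|\bv|^k f\,\td^3\bv\le C\,m^{(2-k)/5}\,g(\bx)^{(k+3)/5}
\]
for a constant $C=C(k)$. (For $k=2$ this is immediate with no splitting, since the left-hand side is exactly $g(\bx)$ and the right-hand side is $C g(\bx)$.)

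It then remains to take the $L^{5/(3+k)}(\mTx)$ norm of this pointwise bound. Writing $p:=5/(3+k)$ and raising to the power $p$, the exponent on $g$ becomes $\tfrac{k+3}{5}\cdot p=\tfrac{k+3}{5}\cdot\tfrac{5}{3+k}=1$, which is precisely the arithmetic coincidence that makes the argument close: the energy density $g$ appears linearly, so that $\int_{\mTx}g(\bx)\,\td^3\bx=2\,\Epar{f}(t)<\infty$ by hypothesis \eqref{def:Ef}. Hence $\big\|\int_{\mRp}|\bv|^k f\,\td^3\bv\big\|_{L^p}^p\le C^p m^{(2-k)p/5}\,2\,\Epar{f}(t)$, and taking the $p$-th root (using $1/p=(3+k)/5$) yields \eqref{bd:pkf} after absorbing the numerical factor into $C$. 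I do not expect any genuine obstacle here: the only points requiring care are that the hypothesis $k\in[0,2]$ is exactly what simultaneously guarantees integrability of $|\bv|^k$ near the origin and validity of the comparison $|\bv|^k\le R^{k-2}|\bv|^2$ at large velocities, and that the optimization in $R$ must be carried out so that the powers of $m$ and of $\Epar{f}(t)$ come out with the stated exponents $(2-k)/5$ and $(3+k)/5$.
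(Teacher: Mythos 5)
Your proof is correct and follows essentially the same approach as the paper: the paper proves \eqref{bd:p0f} by exactly this split-at-radius-$N$ argument with optimization in $N$, gives the same splitting as an alternative proof of \eqref{bd:pf}, and then states that this method "can be adapted" to yield the general bound \eqref{bd:pkf}. You have simply carried out that adaptation directly for general $k\in[0,2]$, which subsumes the three special cases; the details (the comparison $|\bv|^k\le R^{k-2}|\bv|^2$ for $|\bv|>R$, the balance $R^5\sim g/m$, and the exponent arithmetic $(k+3)/5\cdot 5/(3+k)=1$) are all sound.
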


We note that the bound \eqref{bd:p0f} is stronger than the $L^1(\mTx \times \mRp)$ integrability of $f$, and \eqref{bd:pf} is stronger than the result of applying H\"older's inequality to the product $\bv\sqrt f\,\sqrt f$ over $\mTx \times \mRp$.

\begin{proof}
Take any $N>0$ and let $C$ denote a generic positive constant, independent of $N$, whose value may vary from line to line. Then, with $(t,\bx) \in [0,T] \times \mTx$ fixed,
\be
\label{prop:inter}
\begin{aligned}0
\le\int_{\mRp} f(t,\bx,\bv)\td^3\bv&=\int_{|\bv|\le N} f(t,\bx,\bv)\td^3\bv+\int_{|\bv|> N}f(t,\bx,\bv)\td^3\bv\\
&\le CN^3\|f(t,\cdot,\cdot)\|_{L^\infty(\mTx \times \mRp)}+N^{-2}\int_{\mRp} |\bv|^2f(t,\bx,\bv)\td^3\bv.
\end{aligned}
\ee
Now, again with $(t,\bx) \in [0,T] \times \mTx$ fixed, the right-hand side in the last inequality attains its minimum at %
\[
N=N(t,\bx) = C\Big(
\int_{\mRp} |\bv|^2f(t,\bx,\bv)\td^3\bv\,/\,\|f(t,\cdot,\cdot)\|_{L^\infty(\mTx \times \mRp)}\Big)^{1\over5},
\]
and therefore
\[
0\le\int_{\mRp} f(t,\bx,\bv)\td^3\bv\le C\|f(t,\cdot,\cdot)\|_{L^\infty(\mTx \times \mRp)}^{2\over5}\Big(\int_{\mRp} |\bv|^2f(t,\bx,\bv)\td^3\bv\Big)^{3\over5}.
\]
Hence,
\begin{align*}
\Big\|\int_{\mRp} f(t,\cdot,\bv)\td^3\bv\Big\|^{5\over3}_{L^{5\over3}(\mTx)}&\le C\|f(t,\cdot,\cdot)\|_{L^\infty(\mTx \times \mRp)}^{2\over3}\int_{\mTx}\int_{\mRp}{1\over2} |\bv|^2 f(t,\bx,\bv)\td^3\bv\td^3\bx\\
&\le C\|f(t,\cdot,\cdot)\|_{L^\infty(\mTx \times \mRp)}^{2\over3}\, \Epar {f}(t),
\end{align*}
which directly implies \eqref{bd:p0f}.

The inequality \eqref{bd:pf} is a consequence of \eqref{bd:p0f} and H\"older's inequality (applied twice). That is, with $(t,\bx) \in [0,T] \times \mTx$ fixed, we have that
\[
\Big|\int_{\mRp} |\bv|\,f(t,\bx,\bv)\td^3\bv\Big|\le \Big(\int_{\mRp} f(t,\bx,\bv)\td^3\bv\Big)^{1\over2} \, \Big(\int_{\mRp}  |\bv|^2f(t,\bx,\bv)\td^3\bv \Big)^{1\over2},
\]
and therefore, for $t \in [0,T]$,
\[
\begin{aligned}
\Big\|\int_{\mRp} |\bv|\,f(t,\cdot,\bv)\td^3\bv\Big\|^{5\over4}_{L^{5\over4}(\mTx)}&\le\int_{\mTx} \Big(\int_{\mRp} f(t,\bx,\bv)\td^3\bv\Big)^{5\over8} \Big(\int_{\mRp}  |\bv|^2f(t,\bx,\bv)\td^3\bv \Big)^{5\over8}\td^3\bx\\
&\le\left[\int_{\mTx}\Big(\int_{\mRp} f(t,\bx,\bv)\td^3\bv\Big)^{{5\over8}\ccdot{8\over3}}\td^3\bx \right]^{3\over8} \, \left[\int_{\mTx}\Big(\int_{\mRp}  |\bv|^2f(t,\bx,\bv)\td^3\bv \Big)^{{5\over8}\ccdot{8\over 5}}\td^3\bx\right]^{5\over8}\\
&=  \Big\|\int_{\mRp} f(t,\cdot,\bv)\td^3\bv\Big\|_{L^{5\over3}(\mTx)}^{5\over8}\, \Big(\Epar {f}(t)\Big)^{5\over8} .
\end{aligned}
\]
Substituting \eqref{bd:p0f} into the right-hand side of the last inequality then yields \eqref{bd:pf}.

An alternative proof of \eqref{bd:pf} proceeds similarly to that of \eqref{bd:p0f}: for any $N>0$ and any $(t,\bx) \in [0,T] \times \mTx$, we have that
\[
\begin{aligned}
\left|\int_{\mRp} |\bv|\,f(t,\bx,\bv)\td^3\bv\right|&\le\int_{|\bv|\le N}|\bv|\,f(t,\bx,\bv) \td^3\bv+\int_{|\bv|> N}|\bv|\,f(t,\bx,\bv) \td^3\bv\\
&\le CN^4\|f(t,\cdot,\cdot)\|_{L^\infty(\mTx \times \mRp)}+N^{-1}\int_{\mRp} |\bv|^2f(t,\bx,\bv)\td^3\bv,
\end{aligned}
\]
which, upon choosing $N$ so that the right-hand side of the last inequality attains its minimum, and then considering the $L^{5\over4}(\mTx \times \mRp)$ norm of the expression on the left-hand side of the resulting inequality, again yields \eqref{bd:pf}.
Either approach can be adapted to prove both \eqref{bd:ps} and the more general inequality \eqref{bd:pkf}, of which \eqref{bd:p0f}--\eqref{bd:ps} are special cases
for $k=0,1,2$, respectively.
\end{proof}

\begin{remark} Similar estimates hold if we replace the $L^\infty$ norm on the right-hand side of \eqref{bd:pkf}, with a general $L^r$ norm, but we shall not use bounds of this type in our proofs and we therefore omit the details of their derivation.
\end{remark}

\section{\bf Mollified PDE system: existence of solutions via a fixed point method}\label{sec:mo}

We shall assume throughout this section that the mollification parameter $\ep$ is fixed and $0<\ep\ll 1$.
We consider a nonnegative radially symmetric function $\theta_0 \in \bC^\infty_c(\mR^3)$ such that
\[\theta_0(\bx)=0\text{\quad for any } |\bx|>{1\over2},\quad\text{ and \quad}\int_{|\bx|\le{1\over2}}\theta_0(\bx)\td^3\bx=1.
 \]
The support of the function  $\bx \mapsto \ep^{-3}\theta_0(\ep^{-1}\bx)$ is then contained in the box domain $\big\{\bx: |\bx|_\infty\le { \ep\over2} \big\}$; let $\thx$ denote the  $2\pi$-periodic extension of this function -- we shall henceforth consider $\thx(\bx)$ for $\bx \in \mTx$
only.

The mollification of a $2\pi$-periodic locally integrable function $v(\bx)$ is defined by convolution with $\thx$ and is denoted by the superscript $\langle\ep\rangle$; i.e.,
\[v\mo(\bx):=\int_{\mTx}v(\by)\,\thx(\bx-\by)\td^3 \by.\]
We will use the following property of mollification, which is a consequence of the differentiation properties of convolution and H\"older's inequality:
\be
\label{mo:prop}
\| v\mo  \|_{\bC^m(\mTx)}\le C_{\ep,r,m }\|v \|_{L^r(\mTx)}\;\;\text{ for }\;\;1\le r\le\infty,\;\;0\le m<\infty,
\ee
where $C_{\ep,r,m} = \|D^m_{\bx}\thx\|_{L^{r'}(\mTx)}$, $\frac{1}{r}+\frac{1}{r'}=1$.
%

We also introduce the Banach space of (weakly) divergence-free, square-integrable vector field pairs
\[ \incL(\mTx;\mR^6):=\Big\{(\vu,\vB)\in L^2(\mTx;\mR^6)\,\Big|\,\int_{\mTx}\vu\cdot \nabla_{\bx} \phi \td^3\bx=\int_{\mTx}\vB\cdot \nabla_{\bx} \phi \td^3\bx=0\;\;\text{ for all }\;\phi\in \bC^1(\mTx;\mR)\Big\},\]
which we equip with the usual $L^2$ norm. For notational simplicity,
when such functions appear within the $L^2$ norm sign, the superscript ``inc'' will be omitted from our notation for the norm.

\subsection{Definition of the mollified mapping}

We proceed by defining a mapping
$$\cF={\cF_{\mr\fn,\mr\un,\mr\Bn}}: \bC([0,T];\incL(\mTx;\mR^6)) \mapsto \bC([0,T];\incL(\mTx;\mR^6))$$
as follows. 
Note that $\cF$ depends on the initial data $(\mr\fn,\mr\un,\mr\Bn)$, which are considered as being fixed throughout Section \ref{sec:mo}; therefore the dependence of $\cF$ on the initial data will be, for the sake of brevity,
usually omitted from our notation. 

Given   $(\unl,\Bnl ) \in \bC([0,T];\incL(\mTx;\mR^6))$, which are $2\pi$-periodic with respect to $\bx$,
\be
\label{cS:def}
\text{let }\quad (\un,\Bn)(t,\bx)=\cF(\unl,\Bnl)={\cF_{\mr\fn,\mr\un,\mr\Bn}}(\unl,\Bnl)  \quad \text{  solve   \eqref{S:1}, \eqref{S:2}}
\ee
in the sense that
\begin{subequations}
\label{S:1}
\begin{align}
\label{f:S}
&\pt \fn+\bv\cdot \nabla_{\bx} \fn={(( \utt -\bv)\times\Btt)\cnp \fn},   \\
\label{f:S:in}
\text{with  }&\text{$\Cinf_c(\mTx \times \mRp;\mR^1)$ initial datum:}\quad  \fn\IC=\mr\fn \geq 0,
\end{align}
\end{subequations}
where $\fn$ is assumed to be $2\pi$-periodic with respect to $\bx$ for all $(t,\bv) \in [0,T] \times \mRp$, while
$\mr \fn$ is assumed to be $2\pi$-periodic with respect to $\bx$  and compactly supported in $\mTx\times\mRp$;
\begin{subequations}
\label{S:2}
\begin{alignat}{2}
\nonumber
&\pt\un+(\utt\cdot \nabla_{\bx})\un-(\Btt\cdot \nabla_{\bx}) \Bn- \Delta_{\bx}\un+\nabla_{\bx} \overline{\mathcal{P}}\\
\label{u:S}
&\qquad = \un\times\Btt\Big(\intp{\fn\,} \Big)
{+}\Big( \Btt \times\intp{\bv\,\fn\,} \Big)\mo
&&\qquad\text{(subject to }\nc\un=0); \\
\label{B:S}
&\pt \Bn+(\utt\cdot \nabla_{\bx})\Bn-(\Btt\cdot \nabla_{\bx}) \un- \Delta_{\bx}\Bn+{\nabla_{\bx} \overline{\mathcal{P}_\vB}}=0
&&\qquad \text{(subject to }\nc\Bn=0);\\
\label{uB:S:in}
&\qquad \,\text{with $2\pi$-periodic divergence-free $\Cinf(\mTx;\mR^6)$ initial data:}&&\quad(  \un, \Bn)\IC=(\mr\un,\mr\Bn),
\end{alignat}
\end{subequations}
and with  $(\un ,\Bn)$ subject to $2\pi$-periodic boundary conditions with respect to $\bx$.
This system may be solved as follows: first we solve for $\fn$ from the linear equation \eqref{S:1}; we then treat $\fn$ as given and solve for $(\un ,\Bn)$ from the linear system \eqref{S:2}. Once $(\fn,\un ,\Bn)$ is found (in a function space to be made precise in Lemma \ref{thm:S:bound} below), the auxiliary pressure variables $\overline{\mathcal{P}}$, {$\overline{\mathcal{P}_\vB}$} can be recovered by a standard procedure (for example, taking divergence of \eqref{u:S}, applying $\nc\un=0$ and inverting the Laplacian gives $\overline{\mathcal{P}}$ 
uniquely up to a constant
), and are henceforth not considered as part of the solution.

We note that no decay hypotheses need to be imposed on $\fn$ when $|\bv| \rightarrow \infty$, since (as will be shown below)  the assumption that $\mr\fn$ is   compactly supported  guarantees that $\fn(t,\cdot,\cdot)$ is also  compactly supported for all $t \in (0,T]$.

We also note that the mollification in the second product on the right-hand side of \eqref{u:S}
is intentional. It is to ensure that one can conveniently estimate the energy exchange
between \eqref{S:1} and \eqref{S:2}, and eventually eliminate from the energy equality the terms
representing energy exchanges; c.f. Remark \ref{re:en:cons} below.

\begin{lemma}\label{thm:S:bound}
Consider any $T>0$ that is independent of $\ep$. Let  $(\unl,\Bnl)\in \bC([0,T];\incL(\mTx;\mR^6))$. Then, the system \eqref{S:1}, \eqref{S:2},  subject to the initial conditions specified therein,
admits a solution $(\fn,\un,\Bn)$ that satisfies
\be
\label{smooth:cS:uBf}
\fn\in\bC([0,T];\bC^m(\mTx\times \mRp))\quad\text{and} \quad (\un,\Bn) \in \bC([0,T]; \bC^m(\mTx;\mR^6))\quad \mbox{for all $m\geq 0$}.
\ee
Moreover,
\begin{itemize}
\item $\fn \geq 0$, and there exists a scalar-valued mapping $G (\cdot,\cdot,\cdot)$ such that it is monotonically increasing with respect to its first and third arguments, and
\be
\label{fn:comp:S}
\qquad\fn(t,\bx,\bv)=0\;\;\text{ for all }\;\;|\bv|  > G \left(T,\, \mr\fn,\,\max_{[0,T]\times\mTx}\big\{|\utt|,|\Btt|\big\} \right)\;\; \mbox{and all
$(t,\bx) \in [0,T] \times \mTx$}; \ee
\item  we have, for all $t \in [0,T]$, that
\be
\label{fn:Lr:S}
\|\fn(t)\|_{L^r(\mTx\times \mRp)}=\| \mr \fn\|_{L^r(\mTx\times \mRp)} ,\quad r\in[1,\infty];
\ee
\item and the following energy equality holds for all $t \in [0,T]${\rm :}
\be
\label{energy:cS}
\begin{aligned}
\qquad{1\over2}\big(\|\un(t) \|^2_{L^2(\mTx)}+\|\Bn(t) \|^2_{L^2(\mTx)}\big)&+\Epar {\fn}(t)+\int_0^t \big(\|\nabla_{\bx}\un(s) \|^2_{L^2(\mTx)}+\|\nabla_{\bx}\Bn(s) \|^2_{L^2(\mTx)}\big)\td s \\&= {1\over2}\big(\|\mr \un \|^2_{L^2(\mTx)}+\| \mr\Bn \|^2_{L^2(\mTx)}\big)+\Epar { \mr \fn}  +\int_0^t\Rn(s)\td s,\\
   \text{where \;\;}\Rn(s):= \int_{\mTx}& ( \un(s,\bx)  -\unl(s,\bx) )\mo\cdot\Big( \Btt(s,\bx) \times\intp{\bv\,\fn(s,\bx,\bv)}\Big)\td^3\bx,
\end{aligned}
\ee
with the notation $\Epar{\cdot}$ defined in
\eqref{def:Ef}.
\end{itemize}
\end{lemma}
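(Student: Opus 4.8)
\emph{Overall strategy.} The plan is to exploit the triangular structure of \eqref{S:1}--\eqref{S:2}: first solve the linear transport equation \eqref{f:S} for $\fn$, treating the smooth coefficients $\utt$ and $\Btt$ as given; then feed the resulting moments $\intp{\fn}$ and $\intp{\bv\fn}$, which will be smooth and $\bx$-periodic, as data into the linear parabolic system \eqref{S:2} for $(\un,\Bn)$; and finally read off the three asserted properties. The structural input used throughout is that, by \eqref{mo:prop}, the mollified fields $\utt=\unl\mo$ and $\Btt=\Bnl\mo$ belong to $\bC([0,T];\bC^m(\mTx;\mR^3))$ for every $m\ge0$ and are uniformly bounded on $[0,T]\times\mTx$; I write $M:=\max_{[0,T]\times\mTx}\{|\utt|,|\Btt|\}$.

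\emph{The transport equation.} First I would solve \eqref{f:S} by the method of characteristics $\dot\bX=\bV$, $\dot\bV=(\bV-\utt(t,\bX))\times\Btt(t,\bX)$; these coefficients are continuous in $t$ and smooth with bounded derivatives in space, so Cauchy--Lipschitz gives a unique flow on $[0,T]$ depending smoothly on its starting point, and transporting $\mr\fn\in\Cinf_c$ along it yields $\fn\in\bC([0,T];\bC^m(\mTx\times\mRp))$ with $\fn\ge0$. The support bound \eqref{fn:comp:S} follows from a one-line estimate: since $\bV\cdot(\bV\times\Btt)=0$ we have $\ddt|\bV|\le|\utt|\,|\Btt|\le M^2$, hence $|\bV(t)|\le|\bV(0)|+M^2T$; tracing back to $t=0$, $\fn(t,\bx,\bv)\ne0$ forces $|\bv|\le R_0+M^2T$, where $R_0=R_0(\mr\fn)$ is the $\bv$-support radius of $\mr\fn$, which is of the claimed monotone form $G$. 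Finally, the phase-space velocity $(\bv,(\bV-\utt)\times\Btt)$ is divergence-free in $(\bx,\bv)$, so the flow is measure-preserving and \eqref{fn:Lr:S} holds for all $r\in[1,\infty]$.

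\emph{The parabolic system.} With $\fn$ and hence the smooth forcing terms $\intp{\fn}$ and $\intp{\bv\fn}$ fixed, \eqref{S:2} is a \emph{linear} coupled parabolic system for $(\un,\Bn)$ with smooth, bounded coefficients. I would prove existence by a Galerkin scheme in the divergence-free Fourier modes of $\mTx$, against which both pressure gradients vanish (the pressures being recovered afterwards as indicated in the text); uniform-in-dimension $H^m$ bounds for every $m$ follow from energy estimates using the boundedness of all derivatives of $\utt,\Btt$, and passing to the limit together with parabolic smoothing and Sobolev embedding yields $(\un,\Bn)\in\bC([0,T];\bC^m(\mTx;\mR^6))$ for all $m$. (Equivalently, one may use the analytic semigroup generated by the Stokes/Laplace operator and Duhamel's formula with a bootstrap.) As the system is linear with smooth data, no smallness or shortness-of-time hypothesis is needed.

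\emph{The energy identity, and the main point.} The substance of the lemma is \eqref{energy:cS}, obtained by adding three computations that are rigorous thanks to the smoothness and $\bv$-compact support just established. Testing \eqref{f:S} against $\tfrac12|\bv|^2$ and integrating by parts in $\bv$ gives $\ddt\Epar{\fn}=-\int_\mTx\utt\cdot(\Btt\times\intp{\bv\fn})\td^3\bx$. Testing \eqref{u:S} against $\un$ and \eqref{B:S} against $\Bn$: the transport terms $(\utt\cn)\un,(\utt\cn)\Bn$ and the two pressure gradients vanish by the divergence-free constraints; the Lorentz term is killed by $\un\cdot(\un\times\Btt)=0$; the two magnetic couplings cancel against each other because $\int_\mTx\big(\un\cdot(\Btt\cn)\Bn+\Bn\cdot(\Btt\cn)\un\big)\td^3\bx=\int_\mTx(\Btt\cn)(\un\cdot\Bn)\td^3\bx=0$; the Laplacians give $-\|\nabla_\bx\un\|_{L^2(\mTx)}^2-\|\nabla_\bx\Bn\|_{L^2(\mTx)}^2$; and the only surviving coupling is $\int_\mTx\un\cdot(\Btt\times\intp{\bv\fn})\mo\td^3\bx$. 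On summing, the two exchange terms combine: since mollification is linear and self-adjoint on $L^2(\mTx)$ (the kernel $\thx$ being even), $\int_\mTx\un\cdot(\Btt\times\intp{\bv\fn})\mo\td^3\bx=\int_\mTx\un\mo\cdot(\Btt\times\intp{\bv\fn})\td^3\bx$, so subtracting the particle contribution $\int_\mTx\unl\mo\cdot(\Btt\times\intp{\bv\fn})\td^3\bx$ leaves exactly $\int_\mTx(\un-\unl)\mo\cdot(\Btt\times\intp{\bv\fn})\td^3\bx=\Rn$. This is precisely why the mollification was inserted into the second forcing term of \eqref{u:S}. Integrating the resulting pointwise-in-time identity over $[0,t]$ produces \eqref{energy:cS}. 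I expect the only real difficulty to be this bookkeeping --- arranging every coupling to collapse into the single term $\Rn$ --- which rests entirely on the divergence-free constraints, the placement of the mollification, and its self-adjointness.
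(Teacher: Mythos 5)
Your proposal is correct and follows essentially the same route as the paper: solve the transport equation by characteristics (obtaining smoothness, nonnegativity, the compact $\bv$-support, and the $L^r$ invariance), then treat \eqref{S:2} as a linear parabolic system solved by a Galerkin scheme in divergence-free modes (this is the paper's Appendix~B, Lemma~\ref{thm:linearMHD}), and finally assemble the energy identity by using the $L^2$ self-adjointness of the even mollification kernel to transfer $\mo$ onto $\un$, so that the particle and fluid exchange terms collapse into the single remainder $\Rn$. Your minor variations --- the linear-in-time support bound $R_0+M^2T$ in place of the paper's Gr\"onwall-type exponential bound, and the measure-preserving (Liouville) argument for \eqref{fn:Lr:S} in place of integrating $|\fn|^{r-1}$ times the equation for finite $r$ --- are equally valid and change nothing essential.
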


\begin{proof} First, by the properties of the mollifier \eqref{mo:prop}, we have that
\[
\text{\CCm{(\utt,\Btt)}.}
\]
To solve the equation \eqref{f:S}, we first solve the following family of ordinary differential equations for the
associated characteristic curves:
\be
\label{char:eq}
\left\{\begin{aligned}\pt\xx(t;{\bx}_0,{\bv}_0) &=\pp(t;{\bx}_0,{\bv}_0),&&\xx(0;{\bx}_0,{\bv}_0)={\bx}_0, \\
\pt\pp (t;{\bx}_0,{\bv}_0)&=-( \utt(t,\xx(t;{\bx}_0,{\bv}_0))-\pp(t;{\bx}_0,{\bv}_0)\times\Btt(t,\xx(t;{\bx}_0,{\bv}_0)),&&\pp(0;{\bx}_0,{\bv}_0)={\bv}_0,
\end{aligned}\right.
\ee
with  $(\xx,\pp)\in \mT^3 \times\mR^3$. As \CCm{(\utt , \Btt)}, the right-hand side of \eqref{char:eq} and their $D_{\xx, \pp},\, D_{\xx, \pp}^2,\ldots$ derivatives are continuous and grow at most linearly in $|\pp|$; in fact, they are bounded by   $a|\pp|+b$, where $a,b$ only depend on  suitable $\bC([0,T]; \bC^m(\mTx;\mR^6))$ norms  of $(\utt , \Btt)$.
Then, by applying classical results from the theory of ordinary differential
equations to the system \eqref{char:eq} and to its $D_{{\bx}_0,{\bv}_0},\,D^2_{{\bx}_0,{\bv}_0},\ldots$ derivatives, we deduce that, for any initial data $({\bx}_0,{\bv}_0) \in \mTx \times \mRp$, \eqref{char:eq} admits a unique   solution, all of whose $D_{{\bx}_0,{\bv}_0}$, $D^2_{{\bx}_0,{\bv}_0},\ldots$ derivatives are continuous functions defined in $[0,T]\times\mTx\times\mRp$ (see, for example, \cite[Corollary 4.1 on p.101]{hartman}). Moreover, this reasoning is time-reversible, so that for any $t\in[0,T]$, $(\xx(t;\cdot,\cdot),\pp(t;\cdot,\cdot))$ has a unique inverse, which we denote by  $(\xx^{-t}(\cdot,\cdot),\pp^{-t}(\cdot,\cdot))$,  all of whose $D_{\bx,\bv},\,D^2_{\bx,\bv},\ldots$ derivatives are continuous in $[0,T]\times\mTx\times\mRp$. All in all,
\[
\fn(t,\bx,\bv):= \mr \fn\left(\xx^{-t}(\bx,\bv),\pp^{-t}(\bx,\bv)\right) \geq 0
\]
solves the equation \eqref{S:1}, and $D_{x,p}^m\fn\in  \bC ([0,T];\bC(\mTx \times \mRp))$ for all $m\ge0$.

Furthermore, since $\mr\fn$ has been assumed to have compact support in $\mTx \times \mRp$, there exists a positive real number $C_1=C_1(\mr\fn)$ such that
\be
\label{fn:0}
\fn(0,\bx,\bv)=0\qquad \mbox{for all $|\bv|>C_1$ and all $\bx \in \mTx$}.
\ee
By the second equation of \eqref{char:eq},  we have, with $C_2:=\max_{[0,T]\times\mTx}\big\{|\utt|,|\Btt|\big\}$, 
\[
{1\over2} \frac{{\td}}{\td t}|\pp(t;{\bx}_0,{\bv}_0) |^2 =  -\pp \cdot\Big((\utt  \times\Btt)(t,\xx(t;{\bx}_0,{\bv}_0 ))\Big)\le  {1\over2} |\pp |^2 +{1\over2} C_2^4,
\]
which then implies, for any ${\bv}_0\in \mRp$ such that $|{\bv}_0|\le C_1$, that
\[
|\pp(t;{\bx}_0,{\bv}_0) |^2 \le {C_1^2{\rm e}^{T} }+({\rm e}^{T}-1) C_2^4=:G^2 \qquad \forall\,t \in [0,T].
\]
Together with \eqref{fn:0}, this immediately implies that $\fn(t,\bx,\bv)=0$ for all $\bv \in \mRp$ such that $|\bv|>G $
and all $(t,\bx) \in [0,T] \times \mTx$, as has been stated in \eqref{fn:comp:S}.  The  monotonicity  of $G =G \big(T,\, \mr\fn,\,\max_{[0,T]\times\mTx}\big\{|\utt|,|\Btt|\big\} \big)$ as stated above \eqref{fn:comp:S} also follows.

Next we shall prove \eqref{fn:Lr:S}. The construction of $\fn$ implies that the function $t \in [0,T] \mapsto \|\fn(t,\cdot,\cdot)\|_{L^\infty(\mTx \times \mRp)}$ is constant. For $r\in[1,\infty)$, the fact that  $t \in [0,T] \mapsto \|\fn(t,\cdot,\cdot)\|_{L^r(\mTx \times \mRp)}$ is constant follows from integrating $|\fn|^{r-1}$ times \eqref{f:S} over $\mTx\times\mRp$ and using that $\nabla_{\bx}\ccdot \bv=\nabla_{\bv}\ccdot(( \utt -\bv )\times\Btt)=0$ together with the divergence theorem, the compact support of $\fn$, and the $2\pi$-periodicity of $\fn$ with respect to $\bx$.

Next,  we substitute  $(\fn,\utt,\Btt)$ into the linear(ized) MHD system \eqref{S:2}. Thanks to the smoothness and compactness of the support of $\fn(t,\cdot,\cdot)$,  the hypotheses of Lemma \ref{thm:linearMHD} are satisfied. Consequently, we have that \CCm{(\un,\Bn)}.

To prove the energy equality, we apply \eqref{energy:fMHD} of Lemma \ref{thm:linearMHD} to deduce that, for any $t\in[0, T ]$,
\be
\label{energy:S:MHD}
\begin{aligned}
&{1\over2}(\|\un(t) \|^2_{L^2(\mTx)}+\|\Bn(t)\|^2_{L^2(\mTx)})+\int_0^t \big(\|\nabla_{\bx}\un(s) \|^2_{L^2(\mTx)}+\|\nabla_{\bx}\Bn(s) \|^2_{L^2(\mTx)}\big)\td s\\
&\qquad={1\over2} (\| \un(0) \|^2_{L^2(\mTx)}+\| \Bn(0) \|^2_{L^2(\mTx)}) +\int_0^t \int_{\mTx}{ \un\mo  \cdot\Big( \Btt \times\intp{\bv\,\fn}\Big)}\td^3\bx\td s.
\end{aligned}
\ee
Note that in the last term we transferred the mollifier $\mo$ onto $\un$.

Also, by multiplying  \eqref{f:S} with $ |\bv|^2$, integrating over $\mTx\times\mRp$, and performing integration by parts (which is justified, since $\fn(t,\bx,\cdot)$ is compactly supported in $\mRp$ and $\fn(t,\cdot,\bv)$ is $2\pi$-periodic in $\bx$), we obtain
\[
\begin{aligned}
{\ddt}\int_{\mTx\times \mRp}  {1\over2}|\bv|^2\, \fn\td^3\bv\td^3\bx&=\int_{\mTx\times \mRp}  {1\over2} |\bv|^2 \,\nabla_{\bv}\ccdot((\utt-\bv)\times\Btt \fn)\td^3\bv\td^3\bx\\
&=-\int_{\mTx\times \mRp}  \bv \ccdot((\utt-\bv)\times\Btt \fn)\td^3\bv\td^3\bx
\\&=- \int_{\mTx}    \left({ \int_{\mRp} \bv\,\fn\td^3\bv}\right)\cdot (\utt\times\Btt)\td^3\bx \\&=-\int_{\mTx}  { \utt}   \cdot\Big( \Btt \times\intp{\bv\,\fn}\Big)\td^3\bx.
\end{aligned}
\]
We then integrate this equality from $0$ to $t \in (0,T]$ and add \eqref{energy:S:MHD} to it to complete the proof of \eqref{energy:cS}.
\end{proof}

\subsection{Verification of the hypotheses of Schauder's fixed point theorem}

Having shown that the mapping $\cF$ is correctly defined, we shall next apply the following version of Schauder's fixed point theorem.
\begin{theorem}[Schauder's fixed point theorem]\label{Schauder}
Suppose that $\sK$ is a convex subset of a topological vector space and $\cF$ is a continuous mapping of $\sK$
into itself such that the image $\cF(\sK)$ is contained in a compact subset of $\sK$; then, $\cF$ has a fixed point.
\end{theorem}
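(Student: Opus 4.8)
The statement is the classical Schauder--Tychonoff fixed point theorem, so the plan is the standard reduction to Brouwer's fixed point theorem via finite-dimensional approximation. Write $\sK_0$ for a compact subset of $\sK$ containing $\cF(\sK)$. Since $\sK_0$ is compact, hence totally bounded, for each $\ep>0$ one may choose finitely many points $y_1,\dots,y_N\in\sK_0$ whose $\ep$-neighbourhoods cover $\sK_0$. The plan is then to build a continuous \emph{Schauder projection} $\Pi_\ep$ onto the finite-dimensional convex hull $\sK_\ep:=\mathrm{conv}\{y_1,\dots,y_N\}$ by means of a partition of unity subordinate to this cover: setting $\lambda_i(z):=\max\{0,\ep-\|z-y_i\|\}$ and $\Pi_\ep(z):=\big(\sum_i\lambda_i(z)\big)^{-1}\sum_i\lambda_i(z)\,y_i$, one obtains a continuous map enjoying the key approximation property $\|\Pi_\ep(z)-z\|\le\ep$ for every $z\in\sK_0$.

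First I would verify that $\sK_\ep\subset\sK$, which uses the convexity of $\sK$ together with $y_i\in\sK$, and that the composite $\Pi_\ep\circ\cF$ maps the compact convex finite-dimensional set $\sK_\ep$ continuously into itself. Brouwer's fixed point theorem then supplies a point $x_\ep\in\sK_\ep$ with $\Pi_\ep(\cF(x_\ep))=x_\ep$. Next I would exploit the approximation property to estimate $\|x_\ep-\cF(x_\ep)\|=\|\Pi_\ep(\cF(x_\ep))-\cF(x_\ep)\|\le\ep$, so that each $x_\ep$ is an approximate fixed point of $\cF$.

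To conclude, I would take a sequence $\ep_k\downarrow 0$. Because $\cF(x_{\ep_k})\in\sK_0$ and $\sK_0$ is compact, a subsequence of $\{\cF(x_{\ep_k})\}$ converges to some $x^*\in\sK_0\subset\sK$; the bound $\|x_{\ep_k}-\cF(x_{\ep_k})\|\le\ep_k\to0$ then forces the corresponding subsequence of $\{x_{\ep_k}\}$ to converge to the same limit $x^*$. Continuity of $\cF$ gives $\cF(x_{\ep_k})\to\cF(x^*)$ along this subsequence, and comparing the two limits yields $\cF(x^*)=x^*$, the desired fixed point.

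The main obstacle is the finite-dimensional reduction itself: constructing the Schauder projection $\Pi_\ep$ and certifying its approximation property $\|\Pi_\ep(z)-z\|\le\ep$, which is exactly what permits Brouwer's theorem to be invoked on $\sK_\ep$ and what transfers the resulting Brouwer fixed points into genuine approximate fixed points of $\cF$. A secondary subtlety is the level of generality: the clean argument above is phrased for a normed (or at least metrizable, locally convex) space, where $\|\cdot\|$ and the convex combinations defining $\Pi_\ep$ are available; in a fully general topological vector space one must replace the norm-balls by a local base of convex neighbourhoods of the origin and argue with the associated gauge functionals, which is precisely the Tychonoff refinement of Schauder's original Banach-space theorem. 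In the application at hand $\sK$ will sit inside $\bC([0,T];\incL(\mTx;\mR^6))$, a normed space, so the simpler normed version suffices.
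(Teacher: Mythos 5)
Your proposal is the standard and correct proof: Schauder projection onto the convex hull of a finite $\ep$-net, Brouwer's theorem on that finite-dimensional compact convex set, the approximate-fixed-point estimate $\|x_\ep-\cF(x_\ep)\|\le\ep$, and extraction of a convergent subsequence of $\{\cF(x_{\ep_k})\}$ from the compact set $\sK_0$, with continuity of $\cF$ closing the argument. There is nothing to compare it against, however: the paper does not prove this statement at all. It quotes Schauder's fixed point theorem as a classical black-box result (Theorem \ref{Schauder}) and immediately proceeds to verify its three hypotheses -- continuity (Lemma \ref{lm:cS:cont}), endomorphism (Lemma \ref{endo:lemma}) and compactness of the image (Lemma \ref{comp:lemma}) -- for the mollified solution map $\cF$. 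One remark on generality, which your final paragraph already touches on: the theorem as stated in the paper is phrased for an arbitrary topological vector space, and in that generality the classical projection argument does not suffice; your partition-of-unity construction needs a norm (or at least gauges of convex neighbourhoods, i.e.\ local convexity, which is the Tychonoff extension), and the fully general non-locally-convex case is a much deeper result (Cauty). Since the paper applies the theorem inside the Banach space $\bC([0,T^{\flat}];\incL(\mTx;\mR^6))$, your normed-space proof covers exactly the case that is actually used, so the proposal is adequate for the purposes of the paper.
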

Here and below, compactness is in the strong sense, unless stated otherwise.
\begin{remark}\label{re:en:cons}
When $\cF$ admits a fixed point, i.e., $(\unl,\Bnl)=(\un,\Bn)=\cF(\unl,\Bnl)$, the $R$ term in  \eqref{energy:cS}  vanishes, and one recovers the usual energy law \eqref{ddt:Einc} for the \emph{mollified} system. Our next objective is therefore to show, by applying Schauder's fixed point theorem, that the mollified system has a solution. Once we have done so, we shall pass to the limit $\ep \rightarrow 0$ with the mollification parameter. For the moment however $\ep>0$ is held fixed.
\end{remark}

Recall the compact support result \eqref{fn:comp:S} with the monotonicity property of $G(\cdot,\cdot,\cdot)$ specified therein. Together with  the property of mollification $\|(\unlm,\Bnlm)\|_{\bC([0,T];\bC^m(\mTx))}\le C_{\ep,m}\|(\unl,\Bnl)\|_{\bC([0,T]; L^2(\mTx))}$, this yields
\be
\label{comp:fn:cS}
\fn (t,\bx,\bv)=0\;\;\text{ for \; } |\bv|\ge G\left ( T,\mr\fn,C_{\ep,1}\|(\unl,\Bnl)\|_{\bC([0,T]; L^2(\mTx))}\right),\quad\text{for any admissible $\mr\fn$}\;\text{and}\; t\in[0,T].
\ee

The next three lemmas are concerned with verifying the hypotheses of Schauder's fixed point theorem for the mapping $\cF$ defined in \eqref{cS:def}.
We begin, in the next lemma, by proving the continuity of $\cF$.
\begin{lemma}
\label{lm:cS:cont}
For any $T>0$ that is independent of $\ep$, the mapping $\cF$ defined in \eqref{cS:def} subject to fixed initial data $(\mr\fn,\mr\un,\mr\Bn)$ is continuous from the Banach space $\bC([0,T];\incL(\mTx;\mR^6))$
into itself.
\end{lemma}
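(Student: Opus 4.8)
The plan is to prove continuity of $\cF$ by taking a convergent sequence $(\unl_n,\Bnl_n)\to(\unl,\Bnl)$ in $\bC([0,T];\incL(\mTx;\mR^6))$ and showing that the images $(\un_n,\Bn_n):=\cF(\unl_n,\Bnl_n)$ converge to $(\un,\Bn):=\cF(\unl,\Bnl)$ in the same norm. Since $\cF$ is built by first solving the Vlasov equation \eqref{S:1} for $\fn$ and then the linear MHD system \eqref{S:2}, the argument naturally splits into two stages: first establish that the mollified coefficients converge and that the corresponding densities $\fn_n$ converge strongly (together with their first moments), and then feed this into a stability/uniqueness estimate for the linear MHD system \eqref{S:2}.

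First I would exploit the mollification bound \eqref{mo:prop}, which gives $\|(\unlm_n-\unlm,\Bnlm_n-\Bnlm)\|_{\bC([0,T];\bC^m(\mTx))}\le C_{\ep,m}\|(\unl_n-\unl,\Bnl_n-\Bnl)\|_{\bC([0,T];L^2(\mTx))}\to 0$ for every $m\ge0$; thus the smoothed transport coefficients $(\utt_n,\Btt_n)$ converge to $(\utt,\Btt)$ in $\bC([0,T];\bC^m)$. Next I would track this convergence through the characteristic ODE system \eqref{char:eq}: by continuous dependence of ODE solutions on parameters (the right-hand side and its derivatives converging uniformly on compact sets, with the uniform linear growth bound $a|\pp|+b$ controlling the $\bv$-variable), the flow maps $(\xx_n,\pp_n)$ and their inverses converge uniformly on $[0,T]\times\mTx\times\mRp$ to those of the limit problem. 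Because $\fn_n(t,\bx,\bv)=\mr\fn(\xx_n^{-t}(\bx,\bv),\pp_n^{-t}(\bx,\bv))$ with $\mr\fn\in\Cinf_c$, this yields $\fn_n\to\fn$ in $\bC([0,T];\bC(\mTx\times\mRp))$. Crucially, the uniform compact-support bound \eqref{comp:fn:cS} holds along the whole sequence with a common support radius (the $\bC([0,T];L^2)$ norms of $(\unl_n,\Bnl_n)$ being uniformly bounded by convergence), so I obtain convergence of the moments $\intp{\fn_n}\to\intp{\fn}$ and $\intp{\bv\,\fn_n}\to\intp{\bv\,\fn}$, and hence of the source term $(\Btt_n\times\intp{\bv\,\fn_n})\mo$, in $\bC([0,T];\bC^m(\mTx))$.

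For the second stage I would subtract the MHD systems \eqref{S:2} for the two sets of data, obtaining a linear parabolic system for the differences $\delta\un:=\un_n-\un$, $\delta\Bn:=\Bn_n-\Bn$ with zero initial data. Testing against $(\delta\un,\delta\Bn)$, integrating by parts, and using that the transport operators $(\utt_n\cdot\nabla_{\bx})$ and $(\Btt_n\cdot\nabla_{\bx})$ are skew-symmetric on divergence-free fields (so the pressure and the leading coupling terms drop), I would derive a Gronwall inequality of the form $\ddt(\|\delta\un\|_{L^2}^2+\|\delta\Bn\|_{L^2}^2)\le C(t)(\|\delta\un\|_{L^2}^2+\|\delta\Bn\|_{L^2}^2)+r_n(t)$, where $C(t)$ depends on the (uniformly bounded) $\bC^1$ norms of the coefficients and $r_n(t)\to0$ uniformly collects the perturbations coming from the differences of the coefficients and of the Vlasov source established in stage one. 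Gronwall's lemma with vanishing initial data then forces $\|(\delta\un,\delta\Bn)(t)\|_{L^2}\to0$ uniformly on $[0,T]$, which is exactly continuity of $\cF$.

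The main obstacle I anticipate is \emph{not} the MHD stability estimate, which is routine given the smoothing, but rather ensuring that the Vlasov source term converges with enough uniformity: one must pin down a single compact velocity-support radius valid for the entire convergent sequence (so that the $\bv$-integrals defining $n$ and $\bK$ behave like integrals over a fixed bounded set and the dominated convergence is clean), and one must verify that continuous dependence for the characteristic flow \eqref{char:eq} survives despite the unbounded $\bv$-domain. Both are handled by the linear-growth bound $a|\pp|+b$ together with the uniform bound \eqref{comp:fn:cS}, but this coupling between the first-moment convergence and the a priori support control is the delicate point that makes the two stages genuinely interlocked rather than independent.
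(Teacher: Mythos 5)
Your proposal is correct and follows essentially the same route as the paper: mollification convergence of the coefficients, a common compact velocity-support radius from \eqref{comp:fn:cS} giving uniform convergence of $\fn$ and of its zeroth and first moments, and then an $L^2$ energy estimate for the difference of the two linear MHD systems with zero initial data, using skew-symmetry/cancellation of the transport and coupling terms. The only (immaterial) implementation differences are that the paper estimates $\fn_1-\fn_2$ directly as the solution of a forced transport equation along characteristics (avoiding continuous dependence of the inverse flow maps, which is what you use), and closes the MHD stage by invoking the energy equality \eqref{energy:fMHD} of Lemma \ref{thm:linearMHD} together with the Cauchy--Schwarz inequality rather than a Gronwall argument.
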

\begin{proof}
For $i=1,2$, consider $(\unl_i,\Bnl_i)\in\bC([0,T];\incL(\mTx;\mR^6))$ and the associated  $(\un_i,\Bn_i)$ and $\fn_i$. Clearly, Lemma \ref{thm:S:bound}  guarantees that, for $i=1,2$, $(\un_i,\Bn_i)\in\bC([0,T];\incL(\mTx;\mR^6))$ and $\fn_i,\un_i,\Bn_i$ are smooth.

Suppose further that
\be\label{de:u12:B12}
\delta:=\left\|(\unl_1-\unl_2,\Bnl_1-\Bnl_2)\right\|_{\bC([0,T];L^2(\mTx))}\ll1.
\ee
Let $(\unl_1,\Bnl_1)$ be fixed so that $ (\fn_1,\un_1,\Bn_1)$ are all fixed.
Then, our goal is to show that
$(\un_2,\Bn_2)\to(\un_1,\Bn_1)$  strongly in  $\bC([0,T];L^2(\mTx;\mR^6))$ as $\delta\to0$.

To this end, let \[\fn_{12}:=\fn_1-\fn_2 \quad\text{ and likewise for }  \un_{12},\,\Bn_{12} .\]
First, by \eqref{de:u12:B12} and the properties of the mollification \eqref{mo:prop}, we have that
\be
\label{cont:nlm}\lim_{\delta\to0}(\unlm_2,\Bnlm_2)=(\unlm_1,\Bnlm_1)\text{ \;\; strongly in \;}\bC([0,T]\times\mTx;\mR^6).
\ee
Next, by \eqref{S:1}, the governing equation for $\fn_{12}$ is
\[
\pt \fn_{12}+\bv\cdot \nabla_{\bx} \fn_{12}={(( \unlm_2 -\bv)\times\Bnlm_2)\cnp \fn_{12}}+\left(( \unlm_i -\bv)\times\Bnlm_i\right)\Big|_{i=2}^{i=1}\cnp \fn_1,
\]
with initial datum $\fn_{12}\IC=0$. By the method of characteristics, similarly to \eqref{char:eq} and the argument thereafter, one can show that
\[
\|\fn_{12}\|_{\bC([0,T]\times\mTx\times\mRp)}\le T \left\|\left(( \unlm_i -\bv)\times\Bnlm_i\right)\Big|_{i=2}^{i=1}\cnp \fn_1\right\|_{\bC([0,T]\times\mTx\times\mRp)}.
\]
Since  $ \fn_1 $ is fixed and smooth (c.f. \eqref{smooth:cS:uBf}), thanks to the compactness of its support, as specified in \eqref{comp:fn:cS},  and the above estimate together with $\fn_{12}\IC=0$, we have that $\fn_{12}$ also has compact support in $[0,T]\times\mTx\times\mRp$ that is independent of $\delta$. Therefore,  we combine the last estimate and \eqref{cont:nlm} to obtain
\be
\label{cont:fn:moment}
\lim_{\delta\to0}  \intp{\fn_{12}\,}=0\quad\mbox{and}\quad\lim_{\delta\to0}\intp{\bv\,\fn_{12}\,}=\mathbf{0},
\ee
strongly in $\bC([0,T]\times\mTx)$ and $\bC([0,T]\times\mTx;\mathbb{R}^3)$, respectively.

We move on to \eqref{S:2} and write the governing equations for $(\un_{12},\Bn_{12})$ as
\begin{subequations}
\label{S:2:cont}
\begin{alignat}{2}
\nonumber
&\qquad\pt\un_{12}+\unlm_2\cdot \nabla_{\bx}\un_{12}-\Bnlm_2\cdot \nabla_{\bx} \Bn_{12}- \Delta_{\bx}\un_{12}-\un_{12}\times\Bnlm_2\intp{\fn_2}
 +\nabla_{\bx} \overline{\mathcal{P}_{12}}\\
\label{cont:u}
&\qquad\qquad=-\Big( \Bnlm_i \times\intp{\bv\,\fn_i}\Big|_{i=2}^{i=1} \Big)\mo  -\unlm_{12}\cdot \nabla_{\bx}\un_{1}+\Bnlm_{12}\cdot \nabla_{\bx}\Bn_{1}+\un_{1}\times\Bnlm_i\intp{\fn_i}\Big|_{i=2}^{i=1}\\
&\nonumber \hspace{4in}\text{(subject to }\nc\un_{12}=0),\\
\label{cont:B}
&\qquad\pt \Bn_{12}+\unlm_2\cdot \nabla_{\bx}\Bn_{12}-\Bnlm_2\cdot \nabla_{\bx} \un_{12}- \Delta_{\bx}\Bn_{12}+{\nabla_{\bx} \overline{(\mathcal{P}_\vB)_{12}}}=-\unlm_{12}\cdot \nabla_{\bx}\Bn_{1}+\Bnlm_{12}\cdot \nabla_{\bx} \un_1\\
&\nonumber\hspace{4in} \text{(subject to }\nc\Bn_{12}=0),
\end{alignat}
\[
\text{with initial data}\quad(  \un_{12}, \Bn_{12})\IC=(\mathbf{0},\mathbf{0}).
\]
\end{subequations}
Since $(\un_{1},\Bn_{1})$ is fixed and smooth, we use \eqref{cont:nlm} and \eqref{cont:fn:moment} to deduce that
\be\label{RHS:conv:0}
  \text{the right-hand sides of \eqref{cont:u}, \eqref{cont:B} converge to $\mathbf{0}$ strongly in }  \bC([0,T]\times\mTx;\mathbb{R}^3)\,\text{ as }\delta\to0.
\ee
Finally, we invoke \eqref{energy:fMHD} of Lemma \ref{thm:linearMHD} and use the Cauchy--Schwarz inequality to bound the right-hand side of   \eqref{energy:fMHD}   by $ T\,\!\displaystyle\max_{[0,T]}\big(\|\vu\|_{L^2(\mTx)}\|\bh\|_{L^2(\mTx)}+\|\vB\|_{L^2(\mTx)}\|\bh_1\|_{L^2(\mTx)}\big)$. We then combine this with \eqref{S:2:cont} and \eqref{RHS:conv:0} to finally deduce that 
\[
\lim_{\delta\to0}(\un_2,\Bn_2)=(\un_1,\Bn_1)\text{ \;\; strongly in \;}\bC([0,T];L^2(\mTx;\mR^6)).
\]
That completes the proof of the lemma.
\end{proof}

Next, we will verify the endomorphism hypothesis in Schauder's fixed point theorem, for which the energy equality \eqref{energy:cS} plays a key role.
For $a>0$, we introduce the following convex set:
\[
\sK[T,a]:=\Big\{(\vu,\vB)\in \bC([0,T]; \incL(\mTx;\mR^6)) \,\Big|\,\|(\vu,\vB)\|^2_{\bC([0,T]; L^2(\mTx))}\le a \Big\} .
\]
We also recall the definition of $\Epar{\cdot}$ stated in \eqref{def:Ef}, and the definition 
of $\Ettinc[ f, \vu, \vB]$ at the start of Section \ref{sec:cons}. 
%
%
For brevity,  let\[\Eno:= \Ettinc[\mr\fn,\mr\un,\mr\Bn].\]
\begin{lemma}
\label{endo:lemma}
For a fixed  $\ep>0$, there exists a constant $C_\ep>0$ that only depends on $\ep$ and such that, with
\be\label{Tflat}
T^\flat=T^{\flat}\big( \ep,|\mr\fn|_{L^\infty(\mTx\times \mRp)},\Eno\big): =  C_\ep\,|\mr\fn|_{L^\infty(\mTx)}^{-{1\over5}} \,(2\Eno )^{-{4\over5}},
\ee
the mapping $\cF$, defined in \eqref{cS:def} subject to fixed initial data $(\mr\fn,\mr\un,\mr\Bn)$, maps the convex set $\sK [T^{\flat}, 4\Eno  ] \subset {\bC([0,T^{\flat}]; \incL(\mTx;\mR^6))}$ into itself. Furthermore, for any $t \in [0, T^{\flat}]$,
\be
\label{energy:Tflat}
{1\over2}\big(\|\un(t) \|^2_{L^2(\mTx)}+\|\Bn(t) \|^2_{L^2(\mTx)}\big)+\Epar {\fn}(t)+\int_0^t \big(\|\nabla_{\bx}\un \|^2_{L^2(\mTx)}+\|\nabla_{\bx}\Bn \|^2_{L^2(\mTx)}\big)\td s \le 2\,\Eno.
\ee
\end{lemma}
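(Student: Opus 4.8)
The plan is to read \eqref{energy:Tflat} off directly from the energy equality \eqref{energy:cS} and then to deduce the endomorphism property from it. So first I would fix $(\unl,\Bnl)\in\sK[T^\flat,4\Eno]$, form the image $(\un,\Bn)=\cF(\unl,\Bnl)$ together with the associated density $\fn$, and recall from Lemma \ref{thm:S:bound} that these are smooth, that $\fn\ge0$ with compact $\bv$-support, and that they satisfy \eqref{energy:cS}. Writing $\mathcal{E}(t)$ for the left-hand side of \eqref{energy:cS} and noting that the initial terms there sum precisely to $\Eno=\Ettinc[\mr\fn,\mr\un,\mr\Bn]$, the identity reads $\mathcal{E}(t)=\Eno+\int_0^t\Rn(s)\,\td s$. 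Since every term on the left of \eqref{energy:cS} is nonnegative, the whole lemma follows once I show that $\int_0^t\Rn(s)\,\td s\le\Eno$ for $t\in[0,T^\flat]$: this gives $\mathcal{E}(t)\le2\Eno$, which is \eqref{energy:Tflat}, and in particular $\tfrac12(\|\un(t)\|^2_{L^2(\mTx)}+\|\Bn(t)\|^2_{L^2(\mTx)})\le2\Eno$, i.e.\ $\|(\un,\Bn)\|^2_{\bC([0,T^\flat];L^2(\mTx))}\le4\Eno$, which is exactly the statement that $\cF$ maps $\sK[T^\flat,4\Eno]$ into itself.

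The second step is to estimate $\Rn(s)$. I would bound the integrand pointwise and split the $\bx$-integral by H\"older's inequality as $L^\infty\cdot L^\infty\cdot L^1$, placing the two mollified vector factors $(\un-\unl)\mo$ and $\Btt$ in $L^\infty(\mTx)$ and the first velocity moment $\intp{\bv\,\fn}$ in $L^1(\mTx)$. The mollification estimate \eqref{mo:prop} (with $m=0$, $r=2$) converts the $L^\infty$ norms of the mollified factors into $\ep$-dependent constants times $\|\un-\unl\|_{L^2(\mTx)}\le\|\un\|_{L^2(\mTx)}+\|\unl\|_{L^2(\mTx)}$ and $\|\Bnl\|_{L^2(\mTx)}$, respectively; the crucial point is that these constants depend on $\ep$ only. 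For the moment factor I would use \eqref{bd:pf} of Proposition \ref{prop:Lr} together with the embedding $L^{5/4}(\mTx)\hookrightarrow L^1(\mTx)$ on the bounded torus, and replace $\|\fn(s)\|_{L^\infty(\mTx\times\mRp)}$ by $\|\mr\fn\|_{L^\infty(\mTx\times\mRp)}$ via \eqref{fn:Lr:S}, obtaining $\|\intp{|\bv|\,\fn(s)}\|_{L^1(\mTx)}\le C\,\|\mr\fn\|_{L^\infty}^{1/5}\,(\Epar{\fn}(s))^{4/5}$.

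The third and decisive step is a continuity bootstrap, and this is where the main obstacle lies: the remainder $\Rn(s)$ carries the factor $(\Epar{\fn}(s))^{4/5}$, which is itself one of the nonnegative pieces of the energy $\mathcal{E}(s)$ that I am trying to bound, so the estimate is a priori circular. I would break the circularity as follows. Assume provisionally that $\mathcal{E}(s)\le2\Eno$ on an interval; then $\Epar{\fn}(s)\le2\Eno$ and $\|\un(s)\|_{L^2(\mTx)}\le 2\sqrt{\Eno}$ follow from the bound itself, while $\|\unl(s)\|_{L^2(\mTx)},\|\Bnl(s)\|_{L^2(\mTx)}\le2\sqrt{\Eno}$ come from membership in $\sK[T^\flat,4\Eno]$. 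Feeding these into the estimate of the previous paragraph yields a pointwise bound $|\Rn(s)|\le C'_\ep\,\|\mr\fn\|_{L^\infty}^{1/5}\,\Eno\,(2\Eno)^{4/5}$ with $C'_\ep$ depending only on $\ep$, hence $\int_0^t|\Rn|\,\td s\le t\,C'_\ep\,\|\mr\fn\|_{L^\infty}^{1/5}\,\Eno\,(2\Eno)^{4/5}$. I would then fix the constant in \eqref{Tflat} to be $C_\ep:=1/C'_\ep$, so that for every $t\le T^\flat$ this integral is at most $\Eno$, the powers $\|\mr\fn\|_{L^\infty}^{\pm1/5}$ and $(2\Eno)^{\mp4/5}$ cancelling exactly.

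Finally I would run the genuine continuity argument to legitimise the provisional assumption. Since $\un,\Bn\in\bC([0,T^\flat];\bC^m(\mTx;\mR^6))$ and $\fn\in\bC([0,T^\flat];\bC^m(\mTx\times\mRp))$ with uniformly compact $\bv$-support, the map $t\mapsto\mathcal{E}(t)$ is continuous, with $\mathcal{E}(0)=\Eno<2\Eno$ (taking $\Eno>0$, the case $\Eno=0$ being trivial). Set $t^*:=\sup\{t\in[0,T^\flat]:\mathcal{E}(s)\le2\Eno\text{ for all }s\in[0,t]\}$. If $t^*<T^\flat$, then by continuity $\mathcal{E}(t^*)=2\Eno$; yet on $[0,t^*]$ the provisional bound holds, so $\int_0^{t^*}|\Rn|\,\td s<\Eno$ strictly (because $t^*<T^\flat$), whence $\mathcal{E}(t^*)=\Eno+\int_0^{t^*}\Rn\,\td s<2\Eno$, a contradiction. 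Therefore $t^*=T^\flat$, the bound $\mathcal{E}(t)\le2\Eno$ holds throughout $[0,T^\flat]$, and the two assertions of the lemma follow as in the first paragraph. The only real difficulty is this self-referential appearance of the kinetic-energy moment in $\Rn$; the particular short time $T^\flat\sim\|\mr\fn\|_{L^\infty}^{-1/5}\Eno^{-4/5}$ is dictated precisely by the exponents produced in the second step.
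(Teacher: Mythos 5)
Your proposal is correct and follows essentially the same route as the paper: the energy equality \eqref{energy:cS} from Lemma \ref{thm:S:bound}, an $\ep$-dependent bound on the remainder $\Rn$ obtained from the mollification property \eqref{mo:prop}, Proposition \ref{prop:Lr} and the $L^\infty$ invariance \eqref{fn:Lr:S}, followed by a continuity/bootstrap argument (your $t^*$ is the paper's $T^\sharp$) that yields the same expression for $T^\flat$. The only cosmetic difference is that you estimate $\Rn$ by an $L^\infty\cdot L^\infty\cdot L^1$ H\"older splitting of the trilinear integrand, whereas the paper transfers a mollifier onto the second factor and applies Cauchy--Schwarz followed by H\"older in $L^{5}\times L^{5\over4}$; both routes produce the identical bound with a constant depending only on $\ep$.
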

\begin{proof}
In order to avoid the trivial case of zero initial data, we only consider $T^\flat<\infty$. We begin by choosing any $(\unl,\Bnl)\in \sK [T^{\flat},  4 \Eno ]$, i.e, a pair that satisfies the  bound
\be
\label{uBnl:A0}
\|(\unl,\Bnl)\|_{\bC([0,T^\flat];\, L^2(\mTx))}^2\le  4\Eno.
\ee
Next, in the   definition of $\Rn$ featuring in the energy equality \eqref{energy:cS}, we transfer the first mollifier $\mo$ onto the second factor and apply the Cauchy--Schwarz inequality to obtain
\[
|\Rn|\le \|\un-\unl  \|_{L^2(\mTx)}\Big\|\big[  \Bnl\mo \times\intp{\bv\,\fn}\big]\mo\Big \|_{L^2(\mTx)}.
\]
Then, to estimate the second factor, we apply twice the property of mollification stated in \eqref{mo:prop} together with H\"older's inequality, to deduce the existence of a constant $C_\ep$, which depends on $\ep$, such that
\be
\label{interm:est1}
\begin{aligned}
&\;\;\quad\Big\|\big[  \Bnl\mo   \times\intp{\bv\,\fn}\big]\mo\Big \|_{L^2(\mTx)}\le  C_\ep \|\Bnl\|_{L^2(\mTx)}\Big\|\intp{\bv\,\fn}\Big\|_{L^{5\over4}(\mTx)} \le C_\ep\|\Bnl\|_{L^2(\mTx)}\,\Epar {\fn}^{4\over5}\,|\mr\fn|_{L^\infty(\mTx)}^{1\over5},
\end{aligned}
\ee
where  the last inequality follows from  Proposition \ref{prop:Lr} and the invariance property \eqref{fn:Lr:S}.
Therefore,
\[
|\Rn(s)| \le C_\ep\big(\|\un(s)\|_{L^2(\mTx)}+\|\unl(s)\|_{L^2(\mTx)}\big)\,\|\Bnl(s)\|_{L^2(\mTx)}\,[\Epar {\fn}(s)]^{4\over5}\,|\mr\fn(s)|_{L^\infty(\mTx)}^{1\over5},\quad s \in [0,T]. \]
Substituting this bound on $R$ into \eqref{energy:cS}, noting \eqref{uBnl:A0},
we arrive at
\[
\begin{aligned}
{1\over 2}\big(\|\un(t) \|^2_{L^2(\mTx)}+\|\Bn(t) \|^2_{L^2(\mTx)}\big)+\Epar {\fn}(t)+\int_0^t \big(\|\nabla_{\bx}\un(s) \|^2_{L^2(\mTx)}+\|\nabla_{\bx}\Bn(s)\|^2_{L^2(\mTx)}\big)\td s \\
\qquad\le {\Eno } +\int_0^t C_\ep\big(\|\un(s)\|_{L^2(\mTx)}+(4\Eno)^{1\over 2} \big)\,(4\Eno )^{1\over 2}\,[\Epar { \fn}(s)]^{4\over5}\,|\mr\fn(s)|_{L^\infty(\mTx)}^{1\over5}\td s, \quad
t \in [0,T].
\end{aligned}
\]
We see that the expression on the left-hand side of inequality \eqref{energy:Tflat} is a continuous function of $t $, whose value at $t=0$ is \textit{strictly less than} $2\Eno$ on the right-hand side of   \eqref{energy:Tflat}. For nontriviality, we only consider the case when the left-hand side of   \eqref{energy:Tflat} equals $2\Eno$ at least once, at a certain positive time, so that we can define    $T^\sharp>0$ as the \textit{earliest} time at which this happens. We then set $t=T^\sharp$ in  the above estimate, which makes  the left-hand side equal $2\Eno$, i.e.,
\[
2\Eno \le {\Eno } +\int_0^{T^\sharp}C_\ep\big(\|\un(s)\|_{L^2(\mTx)}+(4\Eno)^{1\over 2} \big)\,(4\Eno )^{1\over 2}\,[\Epar { \fn}(s)]^{4\over5}\,|\mr\fn|_{L^\infty(\mTx)}^{1\over5}\td s.
\]
The minimality of $T^\sharp$ also means that, for all $t \in [0,T^\sharp]$, the estimate ${1\over2}\|\un(t) \|^2_{L^2(\mTx)} + \Epar {\fn}(t) \leq 2\Eno $ holds.
Thus, continuing from the last inequality, we obtain
\[
\begin{aligned}
{2\Eno }
&\le {\Eno }+ {T^\sharp} C_\ep\big((4\Eno)^{1\over 2} +(4\Eno)^{1\over 2} \big)\,(4\Eno)^{1\over 2}\,(2\Eno )^{4\over5}\,|\mr\fn|_{L^\infty(\mTx)}^{1\over5},
\end{aligned}
\]
which implies that $ T^\sharp\ge T^\flat$, with $T^\flat$ as defined in \eqref{Tflat} (upon redefining $C_\ep$).
Since, $T^\sharp>0$ is the earliest time at which equality is attained in \eqref{energy:Tflat}, we have thus proved that the estimate \eqref{energy:Tflat} holds for at least all $t \in [0,T^{\flat}]$. The proof is complete.
\end{proof}

Finally, to verify the compactness condition (in the strong topology) in Schauder's fixed point theorem, we shall use the Aubin--Lions--Simon lemma (see, for example,
\cite[Corollary 4 on p.85]{simon}).

\begin{theorem}[Aubin--Lions--Simon lemma]\label{embedding:lemma}
Let $\bX_1$, $\bX_0$ and $\bX_{-1}$ be three Banach spaces with
\emph{compact embedding} $\bX_1\hookrightarrow\hookrightarrow \bX _0$ and continuous embedding $\bX _0\hookrightarrow \bX_{-1}$.  For $1\le r, s \le \infty$ and positive constants $C_1,C_2$, consider the set
\[
{\mathsf S} := \Big\{u\;\big|\; \|u\|_{L^r ((0, T); \bX_1)}\le C_1,\; \|\pt{u}\|_{ L^s ((0, T); \bX_{-1})}\le C_2 \Big\}.
\]
Then, the following statements hold:
\begin{itemize}
\item[(i)] If $r < \infty$, then   ${\mathsf S}$ is compact in  $L^r((0, T); \bX_0)$;
\item[(ii)] If $r  = \infty$ and $s >  1$, then  ${\mathsf S}$ is compact in $\bC([0, T]; \bX_0)$.
\end{itemize}
\end{theorem}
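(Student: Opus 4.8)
The plan is to reduce both assertions to the verification of a Fréchet--Kolmogorov--Simon compactness criterion in the relevant Bochner space, using two analytic ingredients: an interpolation inequality drawn from the compact embedding, and a time-translation estimate drawn from the bound on $\pt u$. The interpolation ingredient is Ehrling's (Lions') lemma: for every $\eta>0$ there is a constant $C_\eta>0$ with
\[
\|v\|_{\bX_0}\le \eta\,\|v\|_{\bX_1}+C_\eta\,\|v\|_{\bX_{-1}}\qquad\text{for all }v\in\bX_1 .
\]
I would prove this by contradiction: were it to fail for some $\eta_0$, one obtains a sequence $(v_n)\subset\bX_1$ with $\|v_n\|_{\bX_0}=1$, $\|v_n\|_{\bX_1}$ bounded and $\|v_n\|_{\bX_{-1}}\to0$; the compact embedding $\bX_1\hookrightarrow\hookrightarrow\bX_0$ extracts an $\bX_0$-convergent subsequence whose limit has unit $\bX_0$-norm but zero $\bX_{-1}$-norm, contradicting the injectivity of the continuous embedding $\bX_0\hookrightarrow\bX_{-1}$.

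\textbf{Part (i).} Writing $\tau_h u(t):=u(t+h)$, for $0<h$ and a.e.\ $t$ I represent $\tau_h u(t)-u(t)=\int_t^{t+h}\pt u(\tau)\,\td\tau$ in $\bX_{-1}$ and apply Hölder's inequality to get $\|\tau_h u(t)-u(t)\|_{\bX_{-1}}\le h^{1-1/s}C_2$ (for $s>1$), respectively the averaged bound $\|\tau_h u-u\|_{L^1((0,T-h);\bX_{-1})}\le hC_2$ (for $s=1$); in either case the translations are small in $\bX_{-1}$, uniformly over ${\mathsf S}$, as $h\to0$. For $r<\infty$ I then invoke Simon's characterization: a bounded family in $L^r((0,T);\bX_0)$ is relatively compact there once its time translations tend to zero in $L^r((0,T-h);\bX_0)$ uniformly and the time-averages $\int_{t_1}^{t_2}u\,\td t$ form a relatively compact set in $\bX_0$. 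The latter is automatic, since the $L^r((0,T);\bX_1)$ bound makes these averages bounded in $\bX_1$, whence relatively compact in $\bX_0$ by the compact embedding. For the former, Ehrling applied to $v=\tau_h u(t)-u(t)$ and integrated in time yields
\[
\|\tau_h u-u\|_{L^r((0,T-h);\bX_0)}\le \eta\,\|\tau_h u-u\|_{L^r((0,T-h);\bX_1)}+C_\eta\,\|\tau_h u-u\|_{L^r((0,T-h);\bX_{-1})} ,
\]
where the first term is $\le 2\eta C_1$ and the second is driven to zero as $h\to0$ by the $\bX_{-1}$-estimate above (when $s=1$ one first interpolates the small $L^1(\bX_{-1})$ control against the bounded $L^r(\bX_1)$ norm to recover smallness in the $L^r(\bX_{-1})$ norm).

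\textbf{Part (ii).} For $r=\infty$ and $s>1$ I would instead apply Arzelà--Ascoli in $\bC([0,T];\bX_0)$. The absolutely continuous representative of $u$ maps into $\bX_{-1}$, and the uniform modulus $\|u(t+h)-u(t)\|_{\bX_{-1}}\le h^{1-1/s}C_2$, upgraded through Ehrling against the a.e.\ bound $\|u(t)\|_{\bX_1}\le C_1$, yields uniform equicontinuity of ${\mathsf S}$ as a family of $\bX_0$-valued maps. Meanwhile boundedness in $\bX_1$ together with the compact embedding shows that $\{u(t):u\in{\mathsf S}\}$ is relatively compact in $\bX_0$ for each $t$. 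Arzelà--Ascoli then furnishes relative compactness of ${\mathsf S}$ in $\bC([0,T];\bX_0)$, after the routine verification that the $\bX_0$-valued map is indeed continuous and the a.e.\ $\bX_1$-bound transfers to all $t$ via a limiting argument.

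\textbf{Main obstacle.} The delicate point in both parts is the order of quantifiers in the Ehrling step: the $\bX_1$-norms of the translations are only \emph{bounded}, not small, so one must first fix $\eta$ small to absorb the $2\eta C_1$ contribution and only afterwards send $h\to0$ to annihilate the $\bX_{-1}$ contribution. The secondary subtlety, confined to part (i) with $s=1$, is the $L^r$-integrability in time of the translations, which is recovered by interpolating the uniformly small $L^1((0,T-h);\bX_{-1})$ bound against the uniform $L^r((0,T);\bX_1)$ bound.
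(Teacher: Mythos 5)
The paper offers no proof of this statement: it is imported as a known result, with a precise citation to Simon's compactness paper (\cite[Corollary 4 on p.~85]{simon}), so there is no internal argument to compare yours against. Your proposal reconstructs the standard proof of that result --- Ehrling's lemma from the compact embedding, translation estimates in $\bX_{-1}$ from the bound on $\pt u$, Simon's averaging/translation criterion for part (i), Arzel\`a--Ascoli for part (ii) --- and its architecture, including the order-of-quantifiers point you emphasize (fix $\eta$ first to absorb the $2\eta C_1$ term, only then send $h\to0$), is the right one.

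One step fails as written, though: the treatment of $s=1$ in part (i). Interpolating the small $L^1((0,T-h);\bX_{-1})$ bound against a \emph{bounded} $L^r$ norm cannot produce smallness in $L^r((0,T-h);\bX_{-1})$ when $r>1$: in the interpolation relation $1/q=\theta+(1-\theta)/r$, taking $q=r$ forces $\theta=0$, so the small factor enters with zero exponent. Concretely, $g_n=n^{1/r}\chi_{[0,1/n]}$ is small in $L^1$ yet has unit $L^r$ norm, so $L^1$-smallness plus $L^r$-boundedness is genuinely insufficient. The repair uses information you already have: the same representation $\tau_h u(t)-u(t)=\int_t^{t+h}\pt u(\tau)\,\td\tau$ gives $\|\tau_h u(t)-u(t)\|_{\bX_{-1}}\le \|\pt u\|_{L^1((0,T);\bX_{-1})}\le C_2$ for a.e.\ $t$, i.e. the translations are uniformly bounded in $L^\infty((0,T-h);\bX_{-1})$; interpolating the small $L^1(\bX_{-1})$ bound against \emph{this} $L^\infty(\bX_{-1})$ bound yields $\|\tau_h u-u\|_{L^r((0,T-h);\bX_{-1})}\le (h C_2)^{1/r}C_2^{1-1/r}\to0$, which is exactly what your Ehrling step requires; this single estimate in fact covers all $s\ge1$ at once, making the separate $s>1$ computation unnecessary in part (i).

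A smaller imprecision occurs in part (ii): the claim that ``the a.e.\ $\bX_1$-bound transfers to all $t$'' is more than a limiting argument can deliver in a general Banach space --- without reflexivity (or some weak-compactness substitute) you cannot place $u(t)$ in $\bX_1$ at the exceptional times. What the limiting argument does give, and all that Arzel\`a--Ascoli needs, is that every value $u(t)$ lies in the $\bX_0$-closure of the set $\{v\in\bX_1:\|v\|_{\bX_1}\le C_1\}$, which is compact in $\bX_0$ by the compact embedding; equicontinuity is first established on the full-measure set where the $\bX_1$-bound holds and then extended to all times by an $\bX_0$-Cauchy-sequence argument, which simultaneously shows the continuous-into-$\bX_{-1}$ representative is in fact continuous into $\bX_0$.
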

\begin{lemma}
\label{comp:lemma}
Suppose that $\ep>0$. With the same hypotheses and notations as in Lemma \ref{endo:lemma}, the image   of the convex set $\sK:=\sK\Big[T^{\flat}, 4\Eno \Big] \subset {\bC([0,T^{\flat}]; \incL(\mTx;\mR^6))}$ under $\cF$ is contained in a  compact subset of $\sK$.
\end{lemma}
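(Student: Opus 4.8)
The plan is to apply the Aubin--Lions--Simon lemma (Theorem \ref{embedding:lemma}) to the image $\cF(\sK)$, establishing compactness in $\bC([0,T^\flat]; L^2(\mTx;\mR^6))$, which is the relevant topology for Schauder's theorem. For any $(\unl,\Bnl) \in \sK$ with associated $(\un,\Bn) = \cF(\unl,\Bnl)$, I would first extract a uniform-in-$(\unl,\Bnl)$ bound on $(\un,\Bn)$ in $L^\infty([0,T^\flat]; L^2(\mTx)) \cap L^2([0,T^\flat]; H^1(\mTx))$; both bounds follow directly from the energy estimate \eqref{energy:Tflat} proved in Lemma \ref{endo:lemma}, since the left-hand side there controls $\tfrac12(\|\un(t)\|^2_{L^2} + \|\Bn(t)\|^2_{L^2})$ pointwise in $t$ as well as the time-integral of $\|\nabla_{\bx}\un\|^2_{L^2} + \|\nabla_{\bx}\Bn\|^2_{L^2}$, all by $2\Eno$. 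Thus I would take $\bX_1 = H^1(\mTx;\mR^6)$ (intersected with the divergence-free constraint), $\bX_0 = L^2(\mTx;\mR^6)$, and $\bX_{-1} = H^{-s}(\mTx;\mR^6)$ for a suitable $s$, with the compact embedding $\bX_1 \hookrightarrow\hookrightarrow \bX_0$ supplied by Rellich--Kondrachov on the torus.

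The second ingredient is a uniform bound on the time derivative $\pt(\un,\Bn)$ in $L^s([0,T^\flat]; \bX_{-1})$ for some $s>1$, which I would read off directly from the equations \eqref{u:S}, \eqref{B:S}. Testing against an $H^s$ test function and integrating by parts, each term on the left-hand side is controlled: the Laplacian $\Delta_{\bx}\un$ pairs into $\nabla_{\bx}\un \in L^2([0,T^\flat];L^2)$; the transport terms $(\utt \cdot \nabla_{\bx})\un$ and $(\Btt\cdot\nabla_{\bx})\Bn$ are bounded using the mollifier estimate \eqref{mo:prop}, which gives $\|(\utt,\Btt)\|_{\bC^1}$ control in terms of $\|(\unl,\Bnl)\|_{L^2} \le (4\Eno)^{1/2}$; and the pressure gradients vanish against divergence-free test functions. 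For the coupling source terms on the right-hand side of \eqref{u:S}, I would invoke the mollifier bound \eqref{mo:prop} together with the moment estimate \eqref{bd:pf} of Proposition \ref{prop:Lr} and the invariance \eqref{fn:Lr:S}, exactly as in the derivation of \eqref{interm:est1}, so that $\big(\Btt \times \intp{\bv\fn}\big)\mo$ is bounded in $L^\infty([0,T^\flat]; \bC^m)$ uniformly over $\sK$. Taking $\bX_{-1}$ with $s$ large enough that the first-order transport and $\bC^m$-smooth terms embed comfortably, one obtains a uniform bound on $\pt(\un,\Bn)$ in, say, $L^2([0,T^\flat]; \bX_{-1})$.

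With both bounds in hand, part (ii) of Theorem \ref{embedding:lemma} (the case $r=\infty$, $s>1$) yields that $\cF(\sK)$ is precompact in $\bC([0,T^\flat]; L^2(\mTx;\mR^6))$. It remains only to confirm that its closure lies \emph{inside} $\sK$: this is immediate, because the estimate \eqref{energy:Tflat} shows every element of $\cF(\sK)$ satisfies $\|(\un,\Bn)\|^2_{\bC([0,T^\flat];L^2)} \le 4\Eno$, and this bound is closed under $\bC([0,T^\flat];L^2)$-limits, so the compact closure remains within $\sK[T^\flat, 4\Eno]$. The divergence-free constraint likewise passes to the limit, keeping limits inside $\incL$.

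\textbf{Main obstacle.} I expect the only delicate point to be securing the uniform time-derivative bound in a \emph{negative}-order space while keeping all constants independent of the particular $(\unl,\Bnl)\in\sK$. The subtlety is that the coefficients $(\utt,\Btt)$ in the transport terms depend on the argument $(\unl,\Bnl)$, so their $\bC^m$ norms are $\ep$-dependent and controlled only through \eqref{mo:prop}; one must verify that the resulting bound on $\pt(\un,\Bn)$ depends on $(\unl,\Bnl)$ solely through $\|(\unl,\Bnl)\|_{L^2}\le (4\Eno)^{1/2}$ and not through any higher norm, which is precisely what the mollification is designed to guarantee. Once this uniformity is checked term by term, the application of the Aubin--Lions--Simon lemma is routine.
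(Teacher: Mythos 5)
There is a genuine gap at the very step where you invoke the Aubin--Lions--Simon lemma. Part (ii) of Theorem \ref{embedding:lemma} (the case $r=\infty$, $s>1$), which is what delivers compactness in $\bC([0,T^\flat];\bX_0)$, requires a uniform bound in $L^\infty((0,T^\flat);\bX_1)$ with $\bX_1$ compactly embedded in $\bX_0$. With your choices $\bX_1=H^1(\mTx;\mR^6)$ and $\bX_0=L^2(\mTx;\mR^6)$, you therefore need $(\un,\Bn)$ bounded in $L^\infty((0,T^\flat);H^1)$ uniformly over $\sK$. But the bounds you extract from the energy estimate \eqref{energy:Tflat} are only $L^\infty((0,T^\flat);L^2)\cap L^2((0,T^\flat);H^1)$: the uniform-in-time control is in $\bX_0$, not $\bX_1$, and the $H^1$ control is only $r=2$ in time. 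With $r=2$ you could invoke part (i) instead, but that yields compactness only in $L^2((0,T^\flat);L^2)$, which is not the topology of $\sK\subset\bC([0,T^\flat];\incL(\mTx;\mR^6))$ on which Schauder's theorem is applied. So neither part of the lemma is applicable to the bounds you actually establish, and the conclusion of precompactness in $\bC([0,T^\flat];L^2)$ does not follow.

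The missing ingredient is a parabolic regularity estimate for the linear system \eqref{S:1}, \eqref{S:2}, and this is exactly how the paper proceeds: applying \eqref{energy:pt} of Lemma \ref{thm:linearMHD} (obtained by testing the equations with $(\pt\un,\pt\Bn)$) controls $\|(\nabla_{\bx}\un,\nabla_{\bx}\Bn)(t)\|^2_{L^2(\mTx)}$ uniformly in $t$ together with $\int_0^{T^\flat}\|(\pt\un,\pt\Bn)\|^2_{L^2(\mTx)}\td s$, i.e., precisely the $L^\infty_tH^1_x$ bound and an $L^2_tL^2_x$ bound on the time derivative needed for part (ii) (so that one may simply take $\bX_{-1}=L^2$, with no negative-order space required). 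Note also that making the right-hand side of that estimate uniform over $\sK$ requires an $L^\infty$ bound on the coefficient $\bg=\Btt\intp{\fn}$, hence on the zeroth moment $\intp{\fn}$; this cannot come from Proposition \ref{prop:Lr}, which only provides an $L^{5/3}(\mTx)$ bound, and instead uses the compact support \eqref{comp:fn:cS} of $\fn$ together with the $L^\infty$ invariance \eqref{fn:Lr:S} --- a point your term-by-term plan passes over, since the estimate \eqref{interm:est1} you cite concerns the other coupling term. Your closing observations (that the image's closure stays in $\sK$ and that the divergence-free constraint survives the limit) are fine once the compactness argument itself is repaired along these lines.
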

\begin{proof}
By applying \eqref{energy:pt} of Lemma \ref{thm:linearMHD} to the system \eqref{S:1}, \eqref{S:2}, we deduce that, for all $t \in (0,T^\flat]$,
\be
\label{energy:pt:cS}
\begin{aligned}
&\|(\nabla_{\bx}\un,\nabla_{\bx}\Bn) \|^2_{L^2(\mTx)}\Big|_0^{t} + \int_0^{t}  \|(\pt\un,\pt\Bn) \|^2_{L^2(\mTx)} \td s\\
&\qquad \le2\max_{[0,t]\times\mTx}\{|\unlm|^2,|\Bnlm|^2,|\bg|^2,1\}\int_0^{t}\|(\nabla_{\bx}\un,\nabla_{\bx}\Bn,\un,\bh)\|_{L^2(\mTx)}^2\td s,
\end{aligned}
\ee
where
\[
\bg:=\Bnlm\intp{\fn} , \quad \bh:=\Big( \Bnlm \times\intp{\bv\,\fn} \Big) \mo .
\]
One can then derive an upper bound on the right-hand side of \eqref{energy:pt:cS} that   only depends on $\ep$ and the initial data as follows. First, combining property \eqref{mo:prop}  and the estimates \eqref{energy:Tflat}--\eqref{interm:est1} in  Lemma \ref{endo:lemma} and its proof, we establish bounds on
\[\max_{[0,t]\times\mTx}\{|\unlm|^2,|\Bnlm|^2 \}\quad\mbox{and}\quad \dint_0^{t}\|(\nabla_{\bx}\un,\nabla_{\bx}\Bn,\un,\bh)\|_{L^2(\mTx)}^2 \td s,
\]
where $0< t\le T^\flat$. It remains to bound the maximum of $|\bg|$, which requires bounding the $L^\infty$ norm of
\[\intp{\fn}\]
over $[0,T^\flat]\times\mTx$. This \textit{cannot} be done using Proposition \ref{prop:Lr}; instead, one can obtain the desired bound by recalling the compact support \eqref{comp:fn:cS} and the $L^\infty_{\bx,\bv}$ invariance property \eqref{fn:Lr:S}.

All in all, we have obtained bounds on $\|(\un,\Bn)\|_{L^\infty((0,T^\flat);H^1(\mTx))}$ and $\|\pt(\un,\Bn)\|_{L^2((0,T^\flat);L^2(\mTx))}$ that only depend on $\ep$ and the initial data. Therefore, by item (ii) of Theorem \ref{embedding:lemma} and recalling the compact Sobolev embedding $H^1(\mTx)\hookrightarrow\hookrightarrow L^2(\mTx)$, we complete the proof of the lemma.
\end{proof}

\subsection{Classical solution of the mollified system}

We have thus shown that $$\cF={\cF_{\mr\fn,\mr\un,\mr\Bn}}\,:\,\bC([0,T^{\flat}]; \incL(\mTx;\mR^6)) \rightarrow \bC([0,T^{\flat}]; \incL(\mTx;\mR^6)),$$ defined in \eqref{cS:def}, satisfies the three hypotheses of Theorem \ref{Schauder}, which are:
\begin{itemize}
\item continuity (by Lemma \ref{lm:cS:cont});
\item endomorphism (by Lemma \ref{endo:lemma}); and
\item compactness (by Lemma \ref{comp:lemma}).
\end{itemize}
We therefore deduce from Theorem \ref{Schauder} that $\cF$ has a fixed point $(\un,\Bn)$ in the space $\bC([0,T^{\flat}]; \incL(\mTx;\mR^6))$, where $T^\flat$ is no less than  (recalling \eqref{Tflat})
\[
T^\flat=T^{\flat}\big( \ep,|\mr\fn|_{L^\infty(\mTx\times \mRp)},\Eno\big)= C_\ep \,|\mr\fn|_{L^\infty(\mTx)}^{-{1\over5}} \,(2\Eno )^{-{4\over5}}>0,
\]
and also $(\un,\Bn)$ and the associated $\fn$ satisfy the smoothness properties \eqref{smooth:cS:uBf}. By the left-continuity of the mapping
$t \in [0,T^\flat] \mapsto (\un(t),\Bn(t)) \in L^2(\mTx;\mR^6)$, we can repeat the same argument inductively for the time intervals $[nT^\flat, (n+1)T^\flat]$  of equal length $T^\flat$, for $n=0,1,2,\ldots, [T/T^\flat]+1$, in order to reach the endpoint $T$ of the time interval $[0,T]$. The success of this   inductive process is guaranteed by  the following facts, which are independent of $n$:
\begin{itemize}
\item $\fn$ remains compactly supported thanks to \eqref{comp:fn:cS};
\item the first argument $\ep$ of $T^\flat(\cdot,\cdot,\cdot)$ is fixed and the second argument $|\mr\fn|_{L^\infty(\mTx\times \mRp)}$ of $T^\flat(\cdot,\cdot,\cdot)$ is also constant thanks to \eqref{fn:Lr:S}; and
\item the third argument $\Eno$ of $T^\flat(\cdot,\cdot,\cdot)$ is \textit{nonincreasing} (as the induction step $n$ increases) thanks to the energy equality \eqref{energy:cS} with fixed point $\unl=\un$ so that $R\equiv0$; therefore, the value of $T^\flat$ is \textit{nondecreasing} and we can thus fix it as its initial value at $n=0$ without affecting the final conclusion.
\end{itemize}

Having shown the existence of a fixed point in $\bC([0,T];\incL(\mTx;\mR^6))$ for the mapping $\cF={\cF_{\mr\fn,\mr\un,\mr\Bn}}$, we can set $(\un,\Bn)=(\unl,\Bnl )=(\ue,\Be)$ and $\fn=\fe$, which also makes   $(\utt,\Btt)=(\uee,\Bee)$,   in \eqref{S:1}, \eqref{S:2} and associated results (especially in Lemma \ref{thm:S:bound}) to deduce the following main result of this section.

\begin{theorem}\label{thm:mo}
Consider the following mollified hybrid Vlasov-MHD system, with fixed $\ep>0$:
\begin{subequations}
\label{e:1}
\begin{align}
\label{f:e}
&\pt \fe+\bv\cdot \nabla_{\bx} \fe={\big(( \uee -\bv)\times \Bee \big)\cnp \fe},   \\
\label{f:e:in}\text{with}&\text{ initial datum}\quad  \fe\IC=\mr\fe\in{\bC^\infty_c}(\mTx \times \mRp),
\end{align}
where $\mr\fe$ is  compactly supported in $  \mTx\times\mRp$; and
\end{subequations}
\begin{subequations}
\label{e:2}
\begin{align}
\nonumber\pt\ue+ (\ue\mo \cdot \nabla_{\bx})\ue- (\Be\mo \cdot \nabla_{\bx}) \Be&- \Delta_{\bx}\ue+\nabla_{\bx}  {\mathcal{P}}\\
\label{u:e}= \ue\times \Be\mo \Big(\intp{\fe} \Big) {+}&\Big(  \Be\mo  \times\intp{\bv\fe} \Big)\mo  \qquad  \;\text{(subject to }\nc\ue=0), \\
\label{B:e}\pt \Be+ (\ue\mo \cdot \nabla_{\bx})\Be- (\Be\mo \cdot \nabla_{\bx}) \ue&- \Delta_{\bx}\Be+{\nabla_{\bx}\mathcal{P}_\vB}=0\qquad  \;\, \text{(subject to }\nc\Be=0),\\
\label{uB:e:in} \text{with {$2\pi$-periodic divergence-free}  initial data}&\quad(  \ue, \Be)\IC=(\mr{\mathbf{U}}_\varepsilon,\mr{\mathbf{B}}_\varepsilon)\in\bC^\infty(\mTx).
\end{align}
\end{subequations}
Then, for any $T>0$, the above system admits a classical solution
\[
\fe\in\bC([0,T];\bC^m(\mTx \times \mRp))\quad\text{      and \quad \CCm{(\ue,\Be)}.}
\]
Moreover, for all $t \in (0,T]$, we have the invariance
\be
\label{fn:Lr:mollify}
\|\fe(t)\|_{L^r(\mTx \times \mRp)}=\| \mr \fe\|_{L^r(\mTx \times \mRp)} ,\quad r\in[1,\infty];
\ee
and the following energy equality holds:
\be
\label{energy:mollify}
\Ettinc[\fe,\ue,\Be](t)  +\|(\nabla_{\bx}\ue,\nabla_{\bx}\Be)\|^2_{L^2([0,t]; L^{2}(\mTx))}= \Ettinc[\mr\fe,\mr{\mathbf{U}}_\varepsilon,\mr{\mathbf{B}}_\varepsilon],
\ee
where
\[\Ettinc[f,\vu,\vB](t): ={1\over2}\big(\|\vu(t) \|^2_{L^2(\mTx)}+\|  \vB(t) \|^2_{L^2(\mTx)}\big) +\Epar{f}(t).\]
\end{theorem}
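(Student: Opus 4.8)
The plan is to assemble the three verified hypotheses into an application of Schauder's fixed point theorem and then to globalize the resulting short-time solution by a time-continuation argument, after which every assertion of the theorem is read off from Lemma \ref{thm:S:bound}. First I would apply Theorem \ref{Schauder} to the mapping $\cF$ carrying the theorem's initial data $(\mr\fe,\mr{\mathbf U}_\ep,\mr{\mathbf B}_\ep)$, on the convex set $\sK=\sK[T^\flat,4\Eno]$. Its three hypotheses are already in hand: continuity of $\cF$ (Lemma \ref{lm:cS:cont}), the endomorphism property $\cF(\sK)\subset\sK$ (Lemma \ref{endo:lemma}), and relative compactness of $\cF(\sK)$ in the strong topology (Lemma \ref{comp:lemma}). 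This yields a fixed point $(\ue,\Be)=\cF(\ue,\Be)$ on $[0,T^\flat]$, with $T^\flat$ as in \eqref{Tflat}. The decisive structural point, recorded in Remark \ref{re:en:cons}, is that \emph{at} the fixed point one has $(\unl,\Bnl)=(\un,\Bn)$, so the mollified inputs satisfy $(\utt,\Btt)=(\uee,\Bee)$; hence the auxiliary system \eqref{S:1}, \eqref{S:2} collapses exactly onto the target system \eqref{e:1}, \eqref{e:2}, and the Schauder fixed point is a genuine classical solution of the mollified hybrid model.

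Second, I would continue this local solution to the arbitrary endpoint $T>0$ by iterating the construction over successive intervals $[nT^\flat,(n+1)T^\flat]$, using the left-continuity of $t\mapsto(\ue(t),\Be(t))$ in $L^2(\mTx;\mR^6)$ to concatenate. Three $n$-independent facts make finitely many steps suffice. The $\bv$-support of $\fe$ stays compact across steps (cf. \eqref{comp:fn:cS}); the $L^\infty$ norm $\|\fe\|_{L^\infty}$ is invariant (cf. \eqref{fn:Lr:S}), so the second argument of $T^\flat(\cdot,\cdot,\cdot)$ is frozen; and, crucially, the initial energy $\Eno$ entering the third argument is \emph{nonincreasing}, because at the fixed point the term $\Rn$ in the energy equality \eqref{energy:cS} vanishes and the identity reduces to \eqref{energy:mollify}. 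Consequently $T^\flat$ is nondecreasing along the iteration and may be fixed at its value for $n=0$, so that $[T/T^\flat]+1$ steps cover $[0,T]$.

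Third, the three displayed conclusions follow by specializing Lemma \ref{thm:S:bound} to $(\utt,\Btt)=(\uee,\Bee)$ and $\fn=\fe$. The regularity \eqref{smooth:cS:uBf} delivers $\fe\in\bC([0,T];\bC^m(\mTx\times\mRp))$ together with the stated smoothness of $(\ue,\Be)$; the invariance \eqref{fn:Lr:S} is precisely \eqref{fn:Lr:mollify}; and the energy equality \eqref{energy:cS}, now with $\Rn\equiv0$, is precisely \eqref{energy:mollify}. I expect the main obstacle to be not any single estimate, since all the hard analysis is already encapsulated in Lemmas \ref{thm:S:bound}--\ref{comp:lemma}, but rather the internal consistency of the globalization: one must check that the energy truly does not increase across each continuation step, so that $T^\flat$ does not degenerate, and that passing through the fixed-point identification preserves both the regularity class and the energy balance on each subinterval. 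This is exactly the place where the deliberate placement of the mollifier in \eqref{u:e}, engineered to annihilate $\Rn$ at the fixed point, is what rescues the argument.
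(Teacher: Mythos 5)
Your proposal is correct and follows essentially the same route as the paper's own proof: Schauder's theorem applied to $\cF$ via Lemmas \ref{lm:cS:cont}, \ref{endo:lemma} and \ref{comp:lemma}, followed by inductive continuation over intervals of length $T^\flat$ justified by the same three $n$-independent facts (compact support \eqref{comp:fn:cS}, $L^\infty$ invariance \eqref{fn:Lr:S}, and nonincreasing energy since $\Rn\equiv 0$ at the fixed point), and finally specialization of Lemma \ref{thm:S:bound} with $(\utt,\Btt)=(\uee,\Bee)$ to read off the regularity, the $L^r$ invariance \eqref{fn:Lr:mollify} and the energy equality \eqref{energy:mollify}. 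No gaps: your emphasis on the deliberate placement of the mollifier in \eqref{u:e} as the mechanism annihilating $\Rn$ matches the paper's Remark \ref{re:en:cons} and the remark following the theorem.
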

In particular, the remainder term $R$ in \eqref{energy:cS} vanishes from the   energy equality \eqref{energy:mollify} thanks to the fixed point property $\un=\unl=\ue$. 
Also, due to the same reasoning as below \eqref{S:2}, the auxiliary variables $\mathcal{P},\mathcal{P}_\vB$ are  not considered to be part of the solution.
\section{\bf Proof of the main result: global existence of  weak solutions}\label{sec:compact}

In this section we prove the main result of this article,  Theorem \ref{thm:incomp},  by showing that, as $\ep\to0$, a subsequence of $\{(f_\ep,\vu_\ep,\vB_\ep)\}_{\ep>0}$ that solves the mollified system formulated in the previous section converges to a weak solution $(f ,\vu ,\vB)$ that solves the original incompressible hybrid Vlasov-MHD model \eqref{eq}, and that this weak solution exists globally in time, i.e., for all nonnegative times.

Throughout this section,  the initial data of   the mollified system \eqref{e:1}, \eqref{e:2} will be constructed from the original initial data, \eqref{initial}, as follows:
\be
\label{e:in:def}
\mr{\mathbf{U}}_\varepsilon=\mr\vu\mo,\quad\mr{\mathbf{B}}_\varepsilon=\mr\vB\mo,\quad \mr\fe=\big(\mr f\ccdot\mychi{|\bv|\le1/\ep}(\bv)\big)*(\thx\,\thp).
\ee
Here, $\mychi{|\bv|\le1/\ep}$ is the cut-off function taking the value 1 in the ball of radius $1/\ep$ in $\mRp$ and 0 otherwise, and the mollifier $\thp=\thp(\bv):=\ep^{-3}\theta_0(\ep^{-1}\bv)$ for $\bv\in\mRp$  where $\theta_0$ (and also $\thx$) has been defined  at the start of Section \ref{sec:mo}.

We will   work with the weak formulation of the mollified system \eqref{e:1}, \eqref{e:2}, which is constructed as follows. We fix any $T>0$. The test functions used in this section are: scalar-valued,  compactly supported functions $g \in\bC^1_c([-1,T+1]\times\mTx\times\mRp)$, and  $\mR^3$-valued functions $\vv \in\bC^1([-1,T+1]\times\mTx;\mathbb{R}^3)$ satisfying $\nabla_{\bx}\ccdot \vv=0$.

For any $t\in (0,T]$, we multiply \eqref{f:e} by $g$ and integrate over $[0,t]\times\mTx\times\mRp$, and we take the dot product of \eqref{u:e} and \eqref{B:e} with $\vv$, and then integrate both over  $[0,t]\times\mTx$.
By performing integrations by parts, noting that
\[
\nabla_{\bv}\ccdot \big(( \ue\mo -\bv)\times \Be\mo \big)=\ncx \ue\mo =\ncx \Be\mo =0,
\]
and observing that, because of the periodic boundary conditions with respect to $\bx$ and thanks to the compactness of the support of $f_\ep$ with respect to $\bv$, all ``boundary terms'' arising in the course of the partial integrations are annihilated, we obtain the following weak formulation of the system \eqref{e:1}, \eqref{e:2}, where ``$:$'' denotes the scalar product in $\mR^{3 \times 3}$:
\begin{subequations}\label{mn:wk}
\begin{align}
\nonumber
&\int_{\mTx \times \mRp}  \fe(t,\bx,\bv)\, g(t,\bx,\bv) \td^3\bx \td^3\bv - \int_{\mTx \times \mRp} \mr\fe(\bx,\bv)\, g(0,\bx,\bv) \td^3\bx \td^3\bv -\int_0^t \int_{\mTx \times \mRp} \fe \pt g \td^3\bx \td^3\bv \td s\\
\label{f:mn:wk}
&\qquad=\int_0^t \int_{\mTx \times \mRp} (\bv  \fe)\cdot \nabla_{\bx} g-\big(( \uee-\bv)\times \Bee \fe\big)\cnp g \td^3\bx \td^3\bv \td s   \\
\intertext{for all $g \in\bC^1_c([-1,T+1]\times\mTx\times\mRp)$;}
\nonumber
&\int_\mTx \ue(t,\bx) \ccdot \vv(t,\bx) \td^3\bx -\int_\mTx \mr \ue(\bx) \ccdot \vv(0,\bx)\td^3\bx - \int_0^t \int_\mTx \ue\ccdot\pt\vv \td^3\bx \td s\\
\nonumber
&\qquad =\int_0^t \int_\mTx \big( \uee\otimes\ue- \Bee\otimes\Be\big) : \nabla_{\bx}\vv \td^3\bx \td s - \int_0^t \int_\mTx \nabla_{\bx}\ue : \nabla_{\bx}\vv \td^3\bx \td s \nonumber\\
&\qquad\qquad+ \int_0^t \int_\mTx \big(\ue\times \Bee\big)\Big(\intp{\fe} \Big)\ccdot\vv { +} \Big(  \Bee\times\intp{\bv\fe} \Big)^{\langle\ep \rangle} \ccdot\vv \td^3\bx \td s\label{u:mn:wk}\\
\intertext{for all $\vv \in\bC^1([-1,T+1]\times\mTx;\mathbb{R}^3)$ satisfying $\nabla_{\bx}\ccdot \vv=0$; and}
&\int_\mTx \Be(t,\bx) \ccdot \vv(t,\bx) \td^3\bx -\int_\mTx \mr{\mathbf{B}}_\varepsilon(\bx) \ccdot \vv(0,\bx) \td^3\bx - \int_0^t \int_\mTx  \Be \ccdot\pt\vv \td^3\bx \td s\nonumber\\
&\qquad=\int_0^t  \int_\mTx\big( \uee\otimes\Be- \Bee\otimes\ue\big) : \nabla_{\bx}\vv \td^3\bx \td s- \int_0^t \int_\mTx\nabla_{\bx}\Be : \nabla_{\bx}\vv \td^3\bx \td s \label{B:mn:wk}
\end{align}
for all $\vv \in\bC^1([-1,T+1]\times\mTx;\mathbb{R}^3)$ satisfying $\nabla_{\bx}\ccdot \vv=0$.
\end{subequations}
\medskip

As a classical solution, whose existence is guaranteed by Theorem \ref{thm:mo}, is thereby automatically a weak solution (in the
sense of \eqref{f:mn:wk}--\eqref{B:mn:wk}), we directly deduce the existence of a triple $(f_\ep,\ue,\Be)$ satisfying  \eqref{f:mn:wk}--\eqref{B:mn:wk} for the initial data \eqref{e:in:def} under consideration.

Next, we summarize, without proof, some standard properties of mollifiers, which will be extensively used in the course of the discussion that follows.

\begin{lemma}\label{lem:mo}
Suppose that $\Omega$ is one of $~\mTx$, $\mRp$ or $\mTx \times \mRp$,  and let $\theta^\ep$ denote one of $\theta^\ep_{\bx}$, $\theta^\ep_{\bv}$ or $\theta^\ep_{\bx} \theta^\ep_{\bv}$, respectively, with $\ep>0$. Let $\bX$ denote one of $L^r(\Omega)$ or $L^s ((0,T);  L^r(\Omega))$, where $r,s\in[1,\infty)$ and $T>0$. Then,
\begin{subequations}
\label{mo:prop:conv}
\begin{align}
\label{mo:prop:conv:a}&\text{for any  $w\in  \bX$,   }\lim_{\ep\to0}\|w* \theta^{\ep}- w\|_\bX=0\text{   and }\|w* \theta^{\ep}\|_\bX\le\|w\|_\bX;\\
\label{mo:prop:conv:b}
&\begin{aligned}
& \text{for  any  $w\in  \bX$, $\{w_n\}_{n \geq 1}\subset\bX$, and any sequence $\{\ep_n\}_{n \geq 1}$ of positive real numbers, }\\
&\qquad\quad\text{if $\lim_{n\to\infty}\ep_n=0$ and $\lim_{n\to\infty}\|w_n- w\|_\bX=0$, then  }\lim_{n\to\infty}\|w_n* \theta^{\ep_n}- w\|_\bX=0.
\end{aligned}
\end{align}
\end{subequations}
\end{lemma}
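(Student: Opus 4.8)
The plan is to recognize these as standard approximate-identity facts and to prove them by first settling the purely spatial case $\bX = L^r(\Omega)$ and then lifting to the space--time case $\bX = L^s((0,T);L^r(\Omega))$. The only structural inputs I would use are that $\theta^\ep \ge 0$ with $\int_\Omega \theta^\ep = 1$, so that $\|\theta^\ep\|_{L^1(\Omega)} = 1$, and that the support of $\theta^\ep$ shrinks to the origin as $\ep \to 0$; both are immediate from the definition of $\theta_0$ and its rescaling given at the start of Section \ref{sec:mo} (on the compact factor $\mTx$ the $L^1$ normalization survives periodization once $\ep$ is small enough that the support fits in a fundamental domain).

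For part \eqref{mo:prop:conv:a} with $\bX = L^r(\Omega)$, the uniform bound $\|w * \theta^\ep\|_{L^r(\Omega)} \le \|w\|_{L^r(\Omega)}$ is immediate from Young's convolution inequality together with $\|\theta^\ep\|_{L^1(\Omega)} = 1$. For the convergence I would run the usual $\delta/3$ density argument: given $\delta > 0$, choose $\phi \in \bC^\infty_c(\Omega)$ with $\|w - \phi\|_{L^r(\Omega)} < \delta/3$ (possible precisely because $r < \infty$), and split
\[
\|w * \theta^\ep - w\|_{L^r(\Omega)} \le \|(w-\phi)*\theta^\ep\|_{L^r(\Omega)} + \|\phi * \theta^\ep - \phi\|_{L^r(\Omega)} + \|\phi - w\|_{L^r(\Omega)}.
\]
The first and third terms are each below $\delta/3$ by the uniform bound just established, while the middle term tends to $0$ as $\ep \to 0$ because $\phi$ is uniformly continuous with compact support and $\theta^\ep$ concentrates at the origin. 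Hence the left-hand side is below $\delta$ for all sufficiently small $\ep$.

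To lift to $\bX = L^s((0,T);L^r(\Omega))$ I would use that the mollifier acts only in the $\bx$ and/or $\bv$ variables and never in $t$, so that $(w * \theta^\ep)(t,\cdot) = w(t,\cdot) * \theta^\ep$ for a.e.\ $t$. The uniform bound then follows by raising the spatial bound to the power $s$ and integrating in $t$. For the convergence, the map $t \mapsto \|w(t,\cdot) * \theta^\ep - w(t,\cdot)\|_{L^r(\Omega)}^s$ tends to $0$ pointwise in $t$ by the spatial result and is dominated by the fixed integrable majorant $2^s \|w(t,\cdot)\|_{L^r(\Omega)}^s$, so the dominated convergence theorem yields $\|w * \theta^\ep - w\|_{\bX} \to 0$.

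Finally, part \eqref{mo:prop:conv:b} follows from \eqref{mo:prop:conv:a} by the triangle inequality
\[
\|w_n * \theta^{\ep_n} - w\|_{\bX} \le \|(w_n - w) * \theta^{\ep_n}\|_{\bX} + \|w * \theta^{\ep_n} - w\|_{\bX} \le \|w_n - w\|_{\bX} + \|w * \theta^{\ep_n} - w\|_{\bX},
\]
where the first inequality on the right invokes the uniform bound and the remaining $\|w * \theta^{\ep_n} - w\|_{\bX}$ tends to $0$ by the convergence statement in \eqref{mo:prop:conv:a} since $\ep_n \to 0$; both terms therefore vanish in the limit. These are entirely classical manipulations, so I do not anticipate a genuine obstacle; the only points meriting a little care are the role of $r < \infty$ (which both supplies $\bC^\infty_c$ density and explains why $r = \infty$ is excluded) and the clean reduction of the space--time case to the spatial one via dominated convergence.
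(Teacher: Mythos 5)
Your proposal is correct and matches the paper's treatment: the paper states \eqref{mo:prop:conv:a} without proof as a standard mollifier fact (your Young's-inequality-plus-density argument and the dominated-convergence lift to $L^s((0,T);L^r(\Omega))$ are exactly the canonical details being omitted), and it derives \eqref{mo:prop:conv:b} from \eqref{mo:prop:conv:a} via precisely the same triangle inequality $\|w_n* \theta^{\ep_n}- w\|_\bX\le \|(w_n- w)* \theta^{\ep_n}\|_\bX+\|w* \theta^{\ep_n}- w\|_\bX$ that you use. Your care about the periodized mollifier retaining unit $L^1$ norm for small $\ep$ and about the role of $r<\infty$ is appropriate and introduces no gap.
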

Here, \eqref{mo:prop:conv:b} follows from \eqref{mo:prop:conv:a} and the triangle inequality
$$
\|w_n* \theta^{\ep_n}- w\|_\bX\le \|w_n* \theta^{\ep_n}- w* \theta^{\ep_n}\|_\bX+\|w* \theta^{\ep_n}- w\|_\bX.
$$
It then   follows  that
\be
\label{conv:mr}
\lim_{\ep\to0}\big\|(\mr{\mathbf{U}}_\varepsilon,\mr{\mathbf{B}}_\varepsilon)-(\mr\vu,\mr\vB)\big\|_{L^2(\mTx)}=0\quad\text{and} \quad
\text{for }r\in[1,\infty),\;\;\lim_{\ep\to0}\|\mr \fe- \mr f\|_{L^r(\mTx\times\mRp)}=0.
\ee
Also, $\|\mr\fe\|_{L^r(\mTx \times \mRp)} \leq \|\mr f\|_{L^r(\mTx \times \mRp)}$, for all $r\in[1,\infty]$, including the $L^\infty$ norm. Thus, thanks to the  $L^r$ invariance property \eqref{fn:Lr:mollify}, we have the uniform $L^r$ bounds
\be\label{fe:Lr:uni}  \|\fe(t)\|_{L^r(\mTx \times \mRp)}\le \|\mr f\|_{L^r(\mTx \times \mRp)}\quad\text{for all}\;\; t\in[0,T],\;\; r\in[1,\infty].\ee
\smallskip

Concerning the initial energy of particles $\Epar{\mr\fe}$, by shifting the mollifier under the integral sign, we have that
\[
\Epar {\mr\fe} =\int_{\mTx\times\mRp}{1\over2} \big(|\bv|^2*(\thx\,\thp)\big ) \,\big(\mr f\ccdot\mychi{|\bv|\le1/\ep} \big)\td^3\bv\td^3\bx\le \int_{\mTx\times\mRp}{1\over2} \big(|\bv|^2*(\thx\,\thp)\big ) \, \mr f \td^3\bv\td^3\bx.
\]
Since $\thp$ is an even function with unit integral over $\mRp$,
we have
\[
|\bv|^2*(\thx\thp) =\int_{\mR^3} \int_{\mT^3} |\bv-\bw|^2\,\thp(\bw)\,\thx(\by)\td^3 \by\td^3 \bw=\int_{\mR^3} (|\bv|^2+|\bw|^2)\,\thp(\bw)\td^3 \bw= |\bv|^2+C_\theta\ep^2,
\]
where $C_\theta:=\int_{\mR^3}|\bq|^2\,\theta(\bq)\td^3 \bq<1$ (c.f. the definition of   $\theta(\cdot) $ at the start of Section \ref{sec:mo}). Hence we deduce that
\[
 \Epar {\mr\fe} \le \Epar {\mr f}+\ep^2|\mr f|_{L^1(\mTx \times \mRp)}.
\]
Combining this with \eqref{mo:prop:conv:a} and recalling the definition of $ \Ettinc$  we have that
\be
\label{IC:en:bound}
\Ettinc[\mr \fe,\mr{\mathbf{U}}_\varepsilon,\mr{\mathbf{B}}_\varepsilon]\le \Ettinc[\mr f,\mr\vu,\mr\vB]+\ep^2|\mr f|_{L^1(\mTx \times \mRp)}.
\ee
By further considering the energy equality \eqref{energy:mollify} and the uniform $L^\infty$ bound in \eqref{fe:Lr:uni}, we obtain
\be
\begin{aligned}
\label{uni:bound:mn}
\Ettinc[\fe,\ue,\Be](t)  +\|(\nabla_{\bx} \ue,\nabla_{\bx}\Be)\|_{L^2((0,T); L^{2}(\mTx))} + \|\fe(t)\|_{\bC(\mTx \times \mRp)}\le \Fo,
\end{aligned}
\ee
for every $t \in [0,T]$ and $\ep\in(0,1]$, where
\[
\Fo:=\Ettinc[\mr f,\mr\vu,\mr\vB]+|\mr f|_{L^\infty(\mTx \times \mRp)}+|\mr f|_{L^1(\mTx \times \mRp)}.
\]
Then, by Proposition \ref{prop:Lr} we also have that
\be
\label{uni:bound:mo}
\Big\|\intp{\fe\,} \Big\|_{\bC([0,T];L^{5\over 3}(\mTx))}+\Big\|\intp{\bv\,\fe\,} \Big\|_{\bC([0,T];L^{5\over 4}(\mTx))}\le C\Fo.
\ee
Here and henceforth $C$ will signify a generic positive constant that is independent of   $\ep$.

\subsection{Time regularity and compactness of the sequence $\{(\ue,\Be)\}_{\ep>0}$}
We would like to apply the Aubin--Lions--Simon compactness result stated in Theorem \ref{embedding:lemma} to the sequence $\{(\ue,\Be)\}_{\ep>0}$ to deduce its strong convergence in a suitable norm, and to this end an $\ep$-uniform bound on $\{(\pt\ue,\pt\Be)\}_{\ep>0}$ is needed. Since the left-hand sides of \eqref{u:mn:wk} and \eqref{B:mn:wk} are equal to, respectively
\[
\int_0^t \int_{\mTx}  \pt \ue\ccdot \vv \td^3\bx \td s  \quad\text{ and } \quad \int_0^t \int_{\mTx} \pt \Be\ccdot \vv \td^3\bx \td s,
\]
we will focus on bounding the right-hand sides of \eqref{u:mn:wk} and \eqref{B:mn:wk}, and in particular the trilinear and quadrilinear terms. Note that an $\ep$-uniform bound on $\{\pt \fe\}_{\ep>0}$ is not sought here, since the weak and weak* compactness of  the sequence $\{\fe\}_{\ep>0}$ will suffice for our purposes.
\begin{enumerate}[(i)]
\item To estimate the first integrals on the right-hand sides of \eqref{u:mn:wk}, \eqref{B:mn:wk}, we employ Ladyzhenskaya's inequality (which is a special case of the, more general, Gagliardo--Nirenberg inequality) to deduce that
\[
\|\ue(t)\|_{L^4(\mTx)}\le C \|\ue(t)\|_{L^2(\mTx)}^{1\over4}\|\nabla_{\bx}\ue(t)\|_{L^2(\mTx)}^{3\over4}+C \|\ue(t)\|_{L^2(\mTx)},\;\;\;\text{and likewise for }\Be(t),
\]
for any $t \in [0,T]$.
Therefore, by noting the uniform energy bound \eqref{uni:bound:mn}, we deduce that
\[
\big\|(\ue,\Be)\big\|_{L^{8\over3}((0,T); L^{4}(\mTx))}\le C_T \Fo.
\]
Here and henceforth $C_T$ will signify a generic positive constant that may depend on $T$ but is independent of the mollification parameter $\ep$.

Hence, by H\"older's inequality (applied twice) and the fact that mollification does not increase Sobolev norms, we have that
\[
\begin{aligned}
&\Big|\text{the first integral on the right-hand side of \eqref{u:mn:wk} and \eqref{B:mn:wk}}\Big|
\\&\qquad\le C\int_0^T\|(\ue,\Be)\|^2_{L^4(\mTx)}\|\nabla_{\bx} \vv\|_{L^2(\mTx)}\td t\\
&\qquad \le C\Big(\int_0^T\|(\ue,\Be)\|^{2\ccdot{4\over 3}}_{L^4(\mTx)}\td t\Big)^{3\over 4}\Big(\int_0^T\|\nabla_{\bx} \vv\|^4_{L^2(\mTx)}\td t\Big)^{1\over 4}\\
&\qquad \le C_T \,\Fo^2\,\|  \vv\|_{L^4((0,T);H^1(\mTx))}.
\end{aligned}
\]
\item The second integral on the right-hand side of \eqref{u:mn:wk} and \eqref{B:mn:wk} is bounded by
$C\Fo\|  \vv\|_{L^{2}((0,T);H^1(\mTx))}$.

\item It remains to bound the last integral of \eqref{u:mn:wk}. We invoke the Gagliardo--Nirenberg inequality,
\[
\|\ue\|_{L^{5}(\mTx)}\le C\|\ue\|_{L^2(\mTx)}^{1\over10}\| \nabla_{\bx}\ue\|_{H^1(\mTx)}^{9\over10}+C\|\ue\|_{L^2(\mTx)},\quad\text{and likewise for }\Be,
\]
for any $t \in [0,T]$, and combine it with the uniform energy bound \eqref{uni:bound:mn} to obtain
\[
\big\|(\ue,\Be)\big\|_{L^{20\over9}((0,T); L^{5}(\mTx) )}\le C_T\Fo.
\]
Combining this with the uniform bounds on the moments in \eqref{uni:bound:mo} and applying H\"older's inequality (twice) together with  the fact that mollification does not increase Sobolev norms, we get that
\[
\begin{aligned}
&\Big|\text{the last integral  of \eqref{u:mn:wk}}\Big| \\
&\qquad\le C\,\Fo\,\int_0^T\|\ue\|_{L^{5}(\mTx)}\,\|\Be\|_{L^5(\mTx)}\|  \vv\|_{\bC(\mTx)}\td t + C\,\Fo\,\int_0^T \|\Be\|_{L^5(\mTx)}\|  \vv\|_{\bC(\mTx)}\td t\\
&\qquad\le C\,\Fo\,\big\| \ue \big\|_{L^{20\over9}((0,T); L^{5}(\mTx))} \,\big\| \Be \big\|_{L^{20\over9}((0,T); L^{5}(\mTx))} \|  { \vv}\|_{L^{10}((0,T);\bC(\mTx))}\\
&\qquad\qquad + C\,\Fo\,\big\| \Be \big\|_{L^{20\over9}((0,T); L^5(\mTx))}\|  \vv\|_{L^{20\over11}((0,T);\bC(\mTx))}\\
&\qquad\le C_T\,(\Fo^2+\Fo^3)\,\|  \vv\|_{L^{10}((0,T);H^2(\mTx))}\qquad\text{ (by Sobolev inequalities applied to $\vv$)}.
\end{aligned}
\]
\end{enumerate}

\smallskip

By combining the bounds established in (i), (ii), (iii) with \eqref{u:mn:wk}, \eqref{B:mn:wk} we deduce that,
\[
\Big|\int_0^T\int_\mTx \pt\ue \ccdot \vv \td^3\bx \td s \Big|+\Big|\int_0^T\int_\mTx \pt\Be\ccdot \vv \td^3\bx \td s\Big|\le C_T\,(\Fo+\Fo^2+\Fo^3)\,\|  \vv\|_{L^{10}((0,T);H^2(\mTx))}.
\]
This estimate now implies that, for fixed $T>0$, any subsequence of $\{(\pt\ue,\pt\Be)\}_{\ep>0}$ is uniformly bounded in $L^{10\over 9 }\left((0,T);\, \big(H^{2}(\mTx;\mR^6)\big)^*\right)$, where   $\big(H^{2}(\mTx;\mR^6)\big)^*$ is the dual space of $H^2(\mTx;\mR^6)$. Together with the uniform energy bound \eqref{uni:bound:mn}, this bound allows us to apply Theorem \ref{embedding:lemma} to the sequence $\{(\ue,\Be)\}_{\ep>0}$ with
\[r=2,\;\; s={10\over 9 },\;\; \bX_1=H^1(\mTx;\mR^6),\;\;\bX_0=L^5(\mTx;\mR^6),\;\;\bX_{-1}=\big(H^{2}(\mTx;\mR^6)\big)^*.\]
Indeed, using  the $L^2(\mTx)$ inner product for the duality pairing, we have that the continuous embedding $H^2(\mTx)\hookrightarrow L^{5\over4}(\mTx)$ implies the continuous embedding    $ L^{5}(\mTx)=(L^{5\over4}(\mTx))^* \hookrightarrow (H^2(\mTx))^*$,
and therefore  we   have $\bX_0$   continuously embedded in $\bX_{-1}$. Also, by the Rellich--Kondrashov theorem, we have  compact embedding of $\bX_1$ into $\bX_0$. Therefore, the hypotheses of Theorem \ref{embedding:lemma} are satisfied.

\begin{lemma}\label{lm:comp:uB}
Let $(\mr\vu,\mr\vB)\in L^2(\mTx;\mR^6)$, $ \mr f \in L^\infty(\mTx\times\mRp)\cap L^1(\mTx\times\mRp)$, $T>0$, and consider, for   $t \in (0,T]$, the family of solutions to \eqref{e:1}, \eqref{e:2} with mollified initial data \eqref{e:in:def}. Then, there exist a sequence of positive real numbers $\{\ep_n\}_{n\geq 1}$ satisfying $\displaystyle\lim_{n\to\infty}\ep_n=0$ and a limit solution $(\ub, \Bb)\in L^2((0,T);H^1(\mTx;\mR^6))\cap  L^\infty((0,T);L^2(\mTx;\mR^6))$, such that
\[
\begin{aligned}
(\uen, \Ben)\to ( \ub, \Bb)\quad&\text{strongly in }\; L^2((0,T);L^5(\mTx;\mR^6)),\;\;\;\text{weakly in }\; L^2((0,T);H^1(\mTx;\mR^6)),\\
&\text{and weak* in }\; L^\infty((0,T);L^2(\mTx;\mR^6))\quad\text{as }\;\;n\to \infty.
\end{aligned}
\]
\end{lemma}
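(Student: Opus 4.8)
The plan is to observe that, by the time the lemma is reached, \emph{all} the analytic work has already been done: the two uniform bounds needed to invoke Theorem \ref{embedding:lemma} are in hand, so the proof is essentially a (nested) subsequence extraction combining the Aubin--Lions--Simon lemma with weak and weak* compactness, followed by identification of the three limits. First I would record the two ingredients explicitly. From the uniform energy bound \eqref{uni:bound:mn}, the family $\{(\ue,\Be)\}_{\ep>0}$ is bounded in $L^\infty((0,T);L^2(\mTx;\mR^6))$ and in $L^2((0,T);H^1(\mTx;\mR^6))$, uniformly in $\ep\in(0,1]$. From the time-derivative estimate derived immediately above the lemma, $\{(\pt\ue,\pt\Be)\}_{\ep>0}$ is bounded in $L^{10/9}((0,T);(H^2(\mTx;\mR^6))^*)$. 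These are precisely the hypotheses of Theorem \ref{embedding:lemma}(i) with the triple $\bX_1=H^1(\mTx;\mR^6)$, $\bX_0=L^5(\mTx;\mR^6)$, $\bX_{-1}=(H^2(\mTx;\mR^6))^*$, the compact embedding $\bX_1\hookrightarrow\hookrightarrow\bX_0$ (Rellich--Kondrashov) and the continuous embedding $\bX_0\hookrightarrow\bX_{-1}$ having already been verified.

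Applying Theorem \ref{embedding:lemma}(i) (note $r=2<\infty$) then yields relative compactness of $\{(\ue,\Be)\}_{\ep>0}$ in $L^2((0,T);L^5(\mTx;\mR^6))$. Hence there exist a sequence $\ep_n\to0$ and a limit $(\ub,\Bb)$ with $(\uen,\Ben)\to(\ub,\Bb)$ strongly in $L^2((0,T);L^5(\mTx;\mR^6))$. Along this sequence I would then extract further subsequences: using reflexivity of $L^2((0,T);H^1(\mTx;\mR^6))$ together with the uniform $H^1$-bound, a Banach--Alaoglu argument gives a subsequence converging \emph{weakly} in $L^2((0,T);H^1(\mTx;\mR^6))$; and since $L^\infty((0,T);L^2(\mTx;\mR^6))$ is the dual of the separable space $L^1((0,T);L^2(\mTx;\mR^6))$, the sequential Banach--Alaoglu theorem gives a subsequence converging \emph{weak*} in $L^\infty((0,T);L^2(\mTx;\mR^6))$. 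As these extractions are nested, the final (relabelled) subsequence $\{\ep_n\}_{n\ge1}$ enjoys all three convergences simultaneously.

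It then remains to check that the three limits coincide and to read off the regularity of $(\ub,\Bb)$. Because $H^1(\mTx)\hookrightarrow L^5(\mTx)$ continuously in three space dimensions (indeed $H^1\hookrightarrow L^6$), weak convergence in $L^2((0,T);H^1)$ forces weak convergence in $L^2((0,T);L^5)$, whose unique weak limit must agree with the strong $L^2(L^5)$ limit $(\ub,\Bb)$; testing against elements of $L^1((0,T);L^2)$ (which sit inside the dual pairing by the same Sobolev chain) shows likewise that the weak* $L^\infty(L^2)$ limit equals $(\ub,\Bb)$. Finally, lower semicontinuity of the norms under weak and weak* convergence gives $(\ub,\Bb)\in L^2((0,T);H^1(\mTx;\mR^6))\cap L^\infty((0,T);L^2(\mTx;\mR^6))$, as claimed.

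I do not expect a genuine obstacle here: the difficult estimates, especially the $L^{10/9}((0,T);(H^2)^*)$ bound on the time derivatives extracted from the trilinear and quadrilinear coupling terms, were established before the statement. The only point demanding care is the bookkeeping of the nested subsequence extraction and the verification that the strong, weak, and weak* limits are the same object, which hinges on the continuous Sobolev chain $H^1\hookrightarrow L^5\hookrightarrow(H^2)^*$ guaranteeing compatibility of the relevant dual pairings.
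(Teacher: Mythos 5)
Your proposal is correct and follows essentially the same route as the paper: the Aubin--Lions--Simon lemma (Theorem \ref{embedding:lemma}(i)) with the triple $H^1\hookrightarrow\hookrightarrow L^5\hookrightarrow (H^2)^*$ for the strong $L^2((0,T);L^5)$ convergence, combined with reflexivity and Banach--Alaoglu for the weak and weak* convergences, exactly as the paper indicates in the paragraphs surrounding the lemma. Your explicit bookkeeping of the nested subsequence extraction and the identification of the three limits via the continuous embeddings is a correct filling-in of details the paper leaves implicit.
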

Here, the weak and weak* convergence results are direct consequences of the uniform energy bound \eqref{uni:bound:mn},   the reflexivity of   $L^2((0,T);H^1(\mTx;\mR^6))$ and the Banach--Alaoglu theorem. The strong convergence result in $L^2((0,T);L^5(\mTx;\mR^6))$ asserted in Lemma \ref{lm:comp:uB} will play an important role later on, in passing to the limit in the trilinear and quadrilinear terms  in \eqref{u:mn:wk} that involve the moments of $\fe$.

\subsection{Weak* convergence of the sequence $\{\fe\}_{\ep>0}$ and its moments}

The aim of this section is to establish the following lemma, concerning weak* convergence of the sequence $\{\fe\}_{\ep>0}$ and of its moments.
\begin{lemma}
\label{lm:comp:f}
Under the hypotheses of Lemma \ref{lm:comp:uB}, there exist a subsequence of $\{\ep_n\}_{n \geq 1}$, still denoted by $\{\ep_n\}_{n \geq 1}$ satisfying $\displaystyle\lim_{n\to\infty}\ep_n=0$, and a limit function $\fb=\fb(t,\bx,\bv)$, such that, as $n\to\infty$,
\begin{alignat}{2}
\label{wk:conv:Linf}
\,\fen &\to \fb &&\quad \mbox{weak* in $L^\infty((0,T)\times\mTx\times\mRp)$},\\
\label{wk:f:Lr:pos}
f &\ge0 &&\quad \mbox{everywhere on $[0,T]\times\mTx\times\mRp$},\\
\label{wk:conv:Lr}
\,\fen &\to \fb &&\quad \mbox{weak* in $L^\infty((0,T);L^r(\mTx \times \mRp))$ for all $r\in(1,\infty)$},\\
\label{conv:mo:pf}
\intp{\bv\,\fen (\cdot,\cdot,\bv)\,}&\to \intp{\bv\,\fb(\cdot,\cdot,\bv)\,}&&\quad \text{weak* in }  L^{\infty}((0,T); L^{5\over4}(\mTx)),\\
\label{conv:mo:f}
\intp{\fen (\cdot,\cdot,\bv)\,}&\to \intp{\fb(\cdot,\cdot,\bv)\,}&&\quad\text{weak* in }  L^{\infty}((0,T); L^{5\over3}(\mTx)),\\
\label{fb:Lr:le}
\|\fb\|_{L^\infty((0,T);L^r(\mTx \times \mRp))}&\le\|\mr f\|_{ L^r(\mTx \times \mRp)} &&\quad\text{ for all $r\in[1,\infty]$}.
\end{alignat}
\end{lemma}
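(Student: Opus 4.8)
The plan is to obtain the limit $\fb$ by weak-$*$ compactness and then identify the limits of the velocity moments through a velocity-truncation argument that rests on the uniform energy bound \eqref{uni:bound:mn}. First I would prove \eqref{wk:conv:Linf}: since $\|\fe\|_{L^\infty((0,T)\times\mTx\times\mRp)}\le\|\mr f\|_{L^\infty(\mTx\times\mRp)}$ uniformly in $\ep$ by \eqref{fe:Lr:uni}, and $L^\infty((0,T)\times\mTx\times\mRp)$ is the dual of the separable space $L^1((0,T)\times\mTx\times\mRp)$, the Banach--Alaoglu theorem furnishes a subsequence $\{\fen\}$ converging weak-$*$ to some $\fb$. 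Pairing against nonnegative $L^1$ functions and using $\fen\ge0$ gives $\fb\ge0$ almost everywhere, which is \eqref{wk:f:Lr:pos}. For \eqref{wk:conv:Lr}, the bound \eqref{fe:Lr:uni} shows $\{\fen\}$ is bounded in $L^\infty((0,T);L^r(\mTx\times\mRp))$ for every $r\in(1,\infty)$, a space which (since $r'\in(1,\infty)$) is the dual of the separable space $L^1((0,T);L^{r'}(\mTx\times\mRp))$; any weak-$*$ limit point agrees with $\fb$ on $\bC^1_c((0,T)\times\mTx\times\mRp)$ by \eqref{wk:conv:Linf}, and since this class is dense in every such predual, uniqueness of limit points forces the whole subsequence to converge weak-$*$ to $\fb$ in each $L^\infty((0,T);L^r)$. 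The estimate \eqref{fb:Lr:le} for $r\in(1,\infty]$ then follows from weak-$*$ lower semicontinuity of the dual norm together with \eqref{fe:Lr:uni}; the endpoint $r=1$ I treat separately below.

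The main obstacle is the passage to the limit in the moments, \eqref{conv:mo:pf} and \eqref{conv:mo:f}, because the weights $\bv$ and $1$ are \emph{not} admissible test functions against a merely weak-$*$ convergent sequence over the unbounded velocity domain $\mRp$. The remedy is a truncation in $\bv$ whose error is controlled uniformly in $n$ by the second moment. Fix a field $\boldsymbol\psi\in\bC([0,T];\bC(\mTx;\mR^3))$; such fields are dense in the predual $L^1((0,T);L^5(\mTx;\mR^3))$ of $L^\infty((0,T);L^{5\over4}(\mTx))$, and the uniform bound on $\|\intp{\bv\,\fen}\|$ in \eqref{uni:bound:mo} will let me upgrade the conclusion from this dense set to the full predual. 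For any $R>0$ I split $\bv=\bv\,\mychi{|\bv|\le R}+\bv\,\mychi{|\bv|>R}$.

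For the truncated part, $\bv\,\mychi{|\bv|\le R}\,\boldsymbol\psi$ is bounded with compact support in $\bv$, hence lies in $L^1((0,T)\times\mTx\times\mRp)$, so by \eqref{wk:conv:Linf} one has $\int_0^T\int_{\mTx}\int_{|\bv|\le R}\fen\,(\bv\cdot\boldsymbol\psi)\,\td^3\bv\,\td^3\bx\,\td s\to\int_0^T\int_{\mTx}\int_{|\bv|\le R}\fb\,(\bv\cdot\boldsymbol\psi)\,\td^3\bv\,\td^3\bx\,\td s$ for each fixed $R$. For the tail I use $|\bv|\le|\bv|^2/R$ on $\{|\bv|>R\}$ together with \eqref{bd:ps} and the energy bound to estimate $\int_0^T\int_{\mTx}\int_{|\bv|>R}\fen\,|\bv|\,|\boldsymbol\psi|\le R^{-1}\|\boldsymbol\psi\|_{\bC([0,T]\times\mTx)}\int_0^T 2\,\Epar{\fen}(s)\,\td s\le C_T\Fo/R$, uniformly in $n$. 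Before applying the same bound to $\fb$, I note that $\int_0^T\int|\bv|^2\fb<\infty$: testing the nonnegative weight $|\bv|^2\mychi{|\bv|\le R}$ via \eqref{wk:conv:Linf} and letting $R\to\infty$ by monotone convergence transfers the energy bound to $\fb$. Combining the truncated-part convergence with the two tail bounds gives $\limsup_n\big|\int_0^T\int_{\mTx}(\intp{\bv\,\fen}-\intp{\bv\,\fb})\cdot\boldsymbol\psi\big|\le C_T\Fo/R$ for every $R$, hence $0$; this proves \eqref{conv:mo:pf}, and the verbatim argument with weight $1$, the bound $1\le|\bv|^2/R^2$, and predual $L^1((0,T);L^{5\over2}(\mTx))$ proves \eqref{conv:mo:f}.

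Finally, for the endpoint $r=1$ in \eqref{fb:Lr:le}, I would test $\fen$ against $\eta(s)\,\mychi{|\bv|\le R}$ with $\eta\ge0$ and $\int_0^T\eta=1$: the quantity $\int_0^T\eta(s)\int_{\mTx}\int_{|\bv|\le R}\fen\,\td^3\bv\,\td^3\bx\,\td s$ is bounded by $\int_0^T\eta(s)\|\fen(s)\|_{L^1(\mTx\times\mRp)}\,\td s\le\|\mr f\|_{L^1(\mTx\times\mRp)}$ thanks to \eqref{fe:Lr:uni}, while by \eqref{wk:conv:Linf} it converges to $\int_0^T\eta(s)\int_{\mTx}\int_{|\bv|\le R}\fb\,\td^3\bv\,\td^3\bx\,\td s$; letting $R\to\infty$ (monotone convergence) and then concentrating $\eta$ at a Lebesgue point of $s\mapsto\|\fb(s)\|_{L^1(\mTx\times\mRp)}$ yields $\|\fb(s)\|_{L^1(\mTx\times\mRp)}\le\|\mr f\|_{L^1(\mTx\times\mRp)}$ for a.e.\ $s$. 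The decisive step throughout is the velocity truncation: all the genuine work lies in showing the large-$|\bv|$ tails are uniformly small, which is precisely where the energy bound \eqref{uni:bound:mn} and Proposition \ref{prop:Lr} enter.
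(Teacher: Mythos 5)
Your overall strategy is the same as the paper's: extract $\fb$ by Banach--Alaoglu from the uniform bound \eqref{fe:Lr:uni}, and handle the moments by a velocity truncation whose tails are controlled, uniformly in $n$, by the energy bound \eqref{uni:bound:mn}. Within that strategy you make two genuinely different choices. First, for \eqref{wk:conv:Lr} the paper extracts a single subsequence valid for all $r$ simultaneously via nested subsequences over the exponents $r=1+2^k$, a diagonal argument, and then a truncation/H\"older interpolation step for intermediate $r$; your observation that the limit is already pinned down by \eqref{wk:conv:Linf} on the class $\bC^1_c$, which is dense in every predual $L^1((0,T);L^{r'}(\mTx\times\mRp))$, so that uniqueness of weak* limit points (together with weak* sequential compactness of bounded sets, these preduals being separable) forces the \emph{same} subsequence to converge in every $L^\infty((0,T);L^r(\mTx\times\mRp))$ with no further extraction, is correct and cleaner than the paper's route. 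Second, for the endpoint $r=1$ of \eqref{fb:Lr:le} you test \eqref{wk:conv:Linf} directly against $\eta(s)\,\mychi{|\bv|\le R}$ and concentrate $\eta$, whereas the paper goes through \eqref{conv:mo:f} and the duality $(L^1(0,T))^*=L^\infty(0,T)$; both are valid.

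There is, however, one step you should patch, in the proof of \eqref{conv:mo:pf} (and likewise \eqref{conv:mo:f}). The assertion of weak* convergence in $L^\infty((0,T);L^{5\over4}(\mTx))$ presupposes that the limit $\intp{\bv\,\fb}$ \emph{belongs} to that space, and your ``upgrade from the dense set to the full predual'' uses this membership quantitatively: the triangle-inequality step bounds the error by $\big(\sup_n\|\intp{\bv\,\fen}\|_{L^\infty((0,T);L^{5\over4}(\mTx))}+\|\intp{\bv\,\fb}\|_{L^\infty((0,T);L^{5\over4}(\mTx))}\big)\,\|\boldsymbol\psi-\boldsymbol\psi_k\|_{L^1((0,T);L^{5}(\mTx))}$. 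But your transfer of the energy bound to $\fb$ (testing $|\bv|^2\mychi{|\bv|\le R}$ and using monotone convergence) yields only the time-integrated bound $\int_0^T\Epar{\fb}\,\td t<\infty$, which via Proposition \ref{prop:Lr} places the first moment merely in $L^{5\over4}$ in time, not in $L^\infty$ in time. Two easy repairs: either test \eqref{wk:conv:Linf} against $\eta(t)\,|\bv|^2\mychi{|\bv|\le R}$ with $\eta\ge0$, $\int_0^T\eta\,\td t=1$, to conclude $\Epar{\fb}(t)\le \Fo$ for a.e.\ $t$ (after which Proposition \ref{prop:Lr} gives the needed membership); or do what the paper does: use \eqref{uni:bound:mo} and Banach--Alaoglu to extract a weak* limit $M_1\in L^\infty((0,T);L^{5\over4}(\mTx))$ of the moments, and then identify $M_1=\intp{\bv\,\fb}$ a.e.\ by your truncation computation (both being locally integrable, agreement against continuous test functions suffices). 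Two further small points: \eqref{wk:f:Lr:pos} asserts nonnegativity \emph{everywhere}, so after your a.e.\ conclusion you must redefine $f$ on a set of zero Lebesgue measure, which does not disturb any of the weak* statements; and the case $r=\infty$ of \eqref{fb:Lr:le} requires the identification $\|\fb\|_{L^\infty((0,T)\times\mTx\times\mRp)}=\|\fb\|_{L^\infty((0,T);L^\infty(\mTx\times\mRp))}$, which the paper establishes by a Fubini argument.
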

\begin{proof}
For the sake of simplicity of the notation, repeatedly extracted subsequences involved in the proof will all be denoted by $\{\fen\}_{n \geq 1}$. As before, $\mychi{|\bv|\le N}$ will denote the cut-off function taking the value 1 in the   ball of radius $N$ in $\mRp$ centred at the origin, and equal to 0 otherwise; let $\mychi{|\bv|> N}=1-\mychi{|\bv|\le N}$.

The uniform $L^\infty$ bound on $\{\fe\}_{\ep>0}$ established in \eqref{fe:Lr:uni}  and the Banach--Alaoglou theorem imply \eqref{wk:conv:Linf}. Then,
the nonnegativity of $f$ on $[0,T] \times \mTx \times \mRp$ stated in \eqref{wk:f:Lr:pos} follows from the nonnegativity  of the continuous functions $\fen$ by Lemma \ref{thm:S:bound}. Indeed, for any   nonnegative   function $\eta\in  L^1((0,T)\times \mTx \times \mRp)$, the weak* convergence \eqref{wk:conv:Linf} implies
\[
\int_0^T\int_{\mTx \times \mRp} f \, \eta  \td^3\bx \td^3\bv \td t =\lim_{n\to\infty}\int_0^T\int_{\mTx \times \mRp} \fen \, \eta  \td^3\bx \td^3\bv \td t\geq 0.
\]
By choosing $\eta=\mychi{f\le-{1\over N}}\ccdot\mychi{|\bv|<N}$ for any $N>0$,
we then deduce that $f \geq 0$ a.e. on $(0,T)\times \mTx \times \mRp$, and we can then modify $f$ on a subset   of $[0,T]\times \mTx \times \mRp$ with zero Lebesgue measure to ensure its nonnegativity everywhere.  The already proven weak* convergence result  \eqref{wk:conv:Linf} is not affected by such an alteration on a set of zero Lebesgue measure.

The uniform $L^r$ bound on $\{\fe\}_{\ep>0}$ for $r\in(1,\infty)$ in \eqref{fe:Lr:uni}  and the Banach--Alaoglou theorem imply \eqref{wk:conv:Lr}, but only for a fixed $r$. Proving that there exists a subsequence   $\{\fen\}_{n\geq 1}$ that \textit{simultaneously} weak* converges in $L^\infty((0,T);L^r(\mTx \times \mRp))$ for all $r\in(1,\infty)$ requires a subtle argument because of the lack of compactness for $r=1$. We begin by finding a sequence of nested subsequences (starting with the one used for \eqref{wk:conv:Linf}):  $\{\fen\}_{n \geq 1}\supset S_1\supset S_2\supset S_3\supset \cdots$  such that the subsequence $S_n$ makes \eqref{wk:conv:Lr} true for $r=1+2^k$ with $k=(-1)^n\lfloor {n\over2}\rfloor$. By a test-function argument, all these (countably many) weak* limits can be taken to be the same $f$ as in \eqref{wk:conv:Lr}. Then, by a diagonal argument, we construct the subsequence $\{\fen\}_{n \geq 1}$ such that $\fen$ is the $n$-th
element of $S_n$, which makes \eqref{wk:conv:Lr} simultaneously true for all $r=1+2^k$ where $k\in\mZ$.

By the uniform $L^r$ bound \eqref{fe:Lr:uni} and the weak* lower-semicontinuity of  the norm of any Banach space, we have $\|\fb\|_{L^\infty((0,T);L^r(\mTx \times \mRp))}\le\|\mr f\|_{ L^r(\mTx \times \mRp)}$ for all $r=1+2^k$ where $k\in\mZ$. Then, we use H\"older's inequality to interpolate these $L^{1+2^k}$ norms and obtain
\be\label{f:Lr:mid}
\|(f,\fen)\|_{L^\infty((0,T);L^r(\mTx \times \mRp))^2}\le 2\big(  \|\mr f\|_{L^\infty(\mTx \times \mRp)}+ \|\mr f\|_{L^1(\mTx \times \mRp)}\big) =: C_0\quad\text{for all}\;\;\;r\in(1,\infty).
\ee

Next, for  any  $r\in(1+2^{k-1},1+2^{k})$, consider the conjugate,  $r':=r/( r-1)$, and choose any $g$ in the function space
$L^1((0,T);L^{r'}(\mTx \times \mRp))$, which is the \textit{predual} of the space $L^\infty((0,T);L^r(\mTx \times \mRp))$.
The size of $g$ for large values of $|\bv|$ can then be made sufficiently small, in the sense that
\be\label{lim:g:N}
\lim_{N\to\infty} \|g \,\mychi{|\bv|> N} \|_{L^1((0,T);L^{r'}(\mTx \times \mRp))}=0.
\ee
By fixing  $N$, applying   H\"older's inequality and using the uniform estimate \eqref{f:Lr:mid}, we have
\begin{align}
\label{ge:N:r}
&\left|\int_0^T  \int_{\mTx \times \mRp} (\fen-\fb)\, g\, \mychi{|\bv|> N} \td^3\bx \td^3\bv \td t\right|
\le C_0\, \|g \,\mychi{|\bv|> N} \|_{L^1((0,T);L^{r'}(\mTx \times \mRp))}.
\end{align}
Also, since $r'>(1+2^k)/(1+ 2^k-1)= 1 + 2^{-k}$ and $g\,  \mychi{|\bv|\le N}\in L^1((0,T_;L^{r'}(\mTx \times \mRp))$ is compactly supported, we must have
$g\,\mychi{|\bv|\le N}\in{L^1((0,T);L^{1+2^{-k}}(\mTx \times \mRp))}$,
whose dual   is ${L^\infty((0,T);L^{1+2^{k}}(\mTx \times \mRp))}$.
Recall  that \eqref{wk:conv:Lr} for $r=1+2^k$ has been established, and thus
\[
\lim_{n\to\infty} \int_0^T\int_{\mTx \times \mRp}  (\fen-\fb)\, g\, \mychi{|\bv|\le N}  \td^3\bx \td^3\bv \td t=0.
\]
Combining this with the uniform limit \eqref{lim:g:N} and the uniform estimate \eqref{ge:N:r},  we have that
\[
\lim_{n\to\infty} \int_0^T\int_{\mTx \times \mRp}  (\fen-\fb)\, g   \td^3\bx \td^3\bv \td t=0\quad\text{for any}\;\;\;g\in L^1((0,T);L^{r'}(\mTx \times \mRp))\,
\]
and therefore the  subsequence $\{\fen\}_{n \geq 1}$ and the weak* limit $f$ we have constructed so far   make \eqref{wk:conv:Lr}   true for any  $r\in(1+2^{k-1},1+2^{k})$ for all $k \geq 1$. The proof of  \eqref{wk:conv:Lr} is therefore complete.

To show \eqref{conv:mo:pf}, we fix any $\phi \in L^\infty((0,T) \times \mTx)$.  Then, we have $v_1^+\mychi{|\bv|\le N}\phi\in L^{1}((0,T) \times \mTx \times \mRp)$ for any $N>0$, where $v_1$ is the first coordinate of $\bv=(v_1,v_2,v_3)$ and $v_1^+:=\max\{v_1,0\}$. We apply \eqref{wk:conv:Linf} to deduce that
\begin{align}\label{eq:eq1}
\int_0^T\int_{\mTx \times \mRp} v_1^+\,\fb\mychi{|\bv|\le N}\,\phi(t,\bx) \td^3\bx \td^3\bv \td t-\lim_{n\to\infty}\int_0^T\int_{\mTx \times \mRp}  v_1^+\, \fen    \,\mychi{|\bv|\le N}\,\phi(t,\bx) \td^3\bx \td^3\bv \td t= 0.
\end{align}
By the uniform energy bound \eqref{uni:bound:mn} and letting $  K:= { \textnormal{ess\,}\sup}_{\,n \in \mathbb{N},\,(t,\bx) \in [0,T] \times \mTx}\big\{ |\phi(t,\bx)|,\,\Epar{\fen }(t)\big\}<\infty$, we obtain the following estimate concerning large values of $|\bv|$:
\begin{align}\label{eq:eq2}
\Big|\int_0^T\int_{\mTx \times \mRp} v_1^+\,{\fen \,} \mychi{|\bv|>N} \phi(t,\bx)\td^3\bx \td^3\bv \td t \Big|\le  \Big|\int_0^T\int_{\mTx \times \mRp}
{|\bv|^2\over N}\,{\fen \,} K \td^3\bx \td^3\bv \td t\Big|    \le {2\,TK^2\over N}.
\end{align}
As the expression on the left-hand side of \eqref{eq:eq2} is a bounded sequence in $\mathbb{R}$ (with respect to $n$), it has a convergent subsequence (not indicated). Thus, taking the limit $n \rightarrow \infty$ in \eqref{eq:eq2} over this subsequence and subtracting the resulting inequality from \eqref{eq:eq1} we deduce that
\[
-{2\,TK^2\over N}\le \int_0^T\int_{\mTx \times \mRp} v_1^+\, \fb \mychi{|\bv|\le N}\,\phi(t,\bx)  \td^3\bx \td^3\bv \td t-\lim_{n\to\infty} \int_0^T\int_{\mTx \times \mRp}  v_1^+\, \fen  \,\phi(t,\bx) \td^3\bx \td^3\bv \td t  \le {2\,TK^2\over N}.
\]
Next, we shall pass to the limit $N\to\infty$ in this inequality; to this end, we shall suppose that $\phi(t,\bx)\ge 0$. Thanks to the nonnegativity of $f$, we can apply the monotone convergence theorem to the limit
\[
\lim_{N\to\infty}\int_0^T\int_{\mTx \times \mRp}   v_1^+\, \fb \mychi{|\bv|\le N}\,\phi(t,\bx)  \td^3\bx \td^3\bv \td t =
\int_0^T\int_{\mTx \times \mRp}   v_1^+\, \fb  \,\phi(t,\bx)  \td^3\bx \td^3\bv \td t
\]
and hence combine the last two estimates/limits to deduce that, for any $\phi$ such that
$0\le\phi \in L^\infty((0,T)\times\mTx)$,
\[
\int_0^T\int_{\mTx \times \mRp} v_1^+\,\fb\phi(t,\bx) \td^3\bx \td^3\bv \td t=\lim_{n\to\infty}\int_0^T\int_{\mTx \times \mRp}  v_1^+\, \fen  \phi(t,\bx) \td^3\bx \td^3\bv \td t.
\]
Since the uniform estimate \eqref{uni:bound:mo} guarantees that both sides are finite and hence all integrands (which are nonnegative) are Lebesgue-integrable, we can apply Fubini's theorem to deduce that
\[
\int_0^T\int_{\mTx} \Big(\int_\mRp v_1^+\,\fb \td^3\bv \Big) \phi(t,\bx)\td^3\bx \td t=\lim_{n\to\infty}\int_0^T\int_{\mTx} \Big(\int_\mRp  v_1^+ \, \fen \td^3\bv \Big)  \,  \phi(t,\bx) \td^3\bx \td t
\]
for any $\phi\geq 0$ such that $\phi \in L^\infty((0,T)\times \mTx)$.

We then repeat the same procedure for any $0\ge\phi \in L^\infty((0,T) \times \mTx \times \mRp)$, following which we repeat the reasoning for $v_1^-$, $v_2$, $v_3$, to finally deduce that
\[
\int_0^T\int_{\mTx} \Big(\int_\mRp \bv\,\fb \td^3\bv \Big) \phi(t,\bx)\td^3\bx \td t=\lim_{n\to\infty}\int_0^T\int_{\mTx} \Big(\int_\mRp  \bv \, \fen \td^3\bv \Big)  \,  \phi(t,\bx) \td^3\bx \td t
\]
for all $\phi \in L^\infty((0,T)\times \mTx)$.

On the other hand, thanks to the uniform estimate \eqref{uni:bound:mo} and the Banach--Alaoglou theorem, there exists an $M_1 \in L^\infty((0,T);L^{5\over 4}(\mTx))$ such that, upon extraction of a subsequence (not indicated),
\[\intp{\bv\,\fen(\cdot,\cdot,\bv)\, }\to M_1\text{ \;\; weak* in \;\; }  L^{\infty}((0,T); L^{5\over4}(\mTx))\quad\text{as }\;\;n\to\infty.\]
Combining these two limits we deduce that
\[
\int_0^T\int_\mTx \Big( \intp{\bv\,\fb(t,\bx,\bv)\,}\Big) \,\phi(t,\bx) \td^3\bx \td t = \int_0^T\int_\mTx M_1(t,\bx)\, \phi(t,\bx) \td^3\bx \td t
\;\;\text{ for all }\;\phi \in L^\infty((0,T)\times \mTx),
\]
which directly implies (for example by du Bois-Reymond's lemma) that
\[
\intp{\bv\,\fb(t,\bx,\bv)\,}=M_1(t,\bx) \quad \mbox{a.e. on $(0,T) \times \mTx$},
\]
and thus \eqref{conv:mo:pf} has been proved. The proof of \eqref{conv:mo:f} proceeds analogously and is therefore omitted.

Finally, the bound \eqref{fb:Lr:le} for $r\in(1,\infty]$ follows from the uniform $L^r$ bound  \eqref{fe:Lr:uni}, the
weak* convergence  \eqref{wk:conv:Linf} and \eqref{wk:conv:Lr}, and the weak* lower-semicontinuity of the norm of a Banach space.
Note that covering the case of $r=\infty$ requires particular care. In fact, for any nonnegative, measurable function $f$ defined on the set $(0,T)\times\mTx \times \mRp$, we have  $\|\fb\|_{L^\infty((0,T)\times\mTx \times \mRp)}=\|\fb\|_{L^\infty((0,T);L^\infty(\mTx \times \mRp))}$ thanks to the following argument: with $A_1:=\|\fb\|_{L^\infty((0,T)\times\mTx \times \mRp)}$ and $A_2:=\|\fb\|_{L^\infty((0,T);L^\infty(\mTx \times \mRp))}$, we deduce that $A_1\le A_2$ by applying Fubini's theorem  to   $\int_0^T\int_{|\bv|<N } \!  \int_{\mTx}\mychi{\fb\ge A_1-\ep}\td^3\bx \td^3\bv \td t$ for arbitrary $\ep>0$ and sufficiently large $N>0$, and we then prove that $A_1\ge A_2$ by applying Fubini's theorem  to   $\int_0^T \int_{\mTx \times \mRp}\mychi{\fb\ge A_1}\td^3\bx \td^3\bv \td t$.

It remains to prove \eqref{fb:Lr:le} for $r=1$, which does not, in fact, rely on \eqref{wk:conv:Lr} but on \eqref{conv:mo:f}.
Consider a univariate integrable function $g \in L^1(0,T)$. Hence, automatically, $g \in L^{1}((0,T);L^{5\over2}(\mTx))$, whose dual space is $L^\infty((0,T);L^{5\over3}(\mTx))$. Therefore, by \eqref{conv:mo:f},
\[
\int_0^T\int_\mTx \Big( \intp{\fb(t,\bx,\bv)\,}\Big) \,g(t) \td^3\bx \td t
= \lim_{n\to\infty}\int_0^T\int_{\mTx} \Big( \intp{\fen(t,\bx,\bv)\,}\Big) \,g(t) \td^3\bx \td t,
\]
where all integrands are Lebesgue integrable. We then apply Fubini's theorem and use the fact that both $f$ and $\fen$ are nonnegative to deduce that
\[
\int_0^T\|f(t)\|_{L^1(\mTx \times \mRp)}\,g(t)\td t\le\liminf_{n\to\infty}\int_0^T\|\fen(t)\|_{L^1(\mTx \times \mRp)}\,g(t)\td t.
\]
By the uniform $L^r$ bound \eqref{fe:Lr:uni}, we have that the right-hand side is dominated by
$\|\mr f\|_{L^1(\mTx \times \mRp)}\|g\|_{L^1(0,T)}$. Hence, by taking the supremum over all $g$ such
that $\|g\|_{L^1(0,T)}=1$ gives
\[\Big(\|f(t)\|_{L^1(\mTx \times \mRp)}\Big)_{(L^1(0,T))^*} \le \|\mr f\|_{L^1(\mTx \times \mRp)}.
\]
Since the norm in the dual space $(L^1(0,T))^*$ is identical to the $L^\infty(0,T)$ norm,
we have proved \eqref{fb:Lr:le} for $r=1$ as well. That completes the proof.
\end{proof}

With these convergence results in place, we are now ready to pass to the limit $\ep \rightarrow 0$ in \eqref{mn:wk} to prove the main result of the paper; this will be the subject of the next section.

\subsection{The limit solves the weak form of the PDE}
It remains to prove that the limits identified in Lemmas \ref{lm:comp:uB}, \ref{lm:comp:f} satisfy the weak form of the original (nonmollified) PDE \eqref{eq}, in a sense that will be made precise in the next definition.

\begin{definition}
\label{def:wk:inc}All functions in this definition are understood to be $2\pi$-periodic with respect to the independent variable $\bx$. Suppose that the initial datum $\mr f \in L^1(\mTx\times\mRp)\cap L^\infty(\mTx\times\mRp)$ is such that $\Epar{\mr f}$ is
finite, and consider divergence-free initial data $(\mr\vu,\mr\vB)\in \incL(\mTx;\mR^6)$ and $T>0$. We call
\begin{subequations}\label{def:wk:reg:int}
\be( \vu, \vB)\in L^\infty((0,T);\incL(\mTx;\mR^6))\cap L^2((0,T);H^1(\mTx;\mR^6)),
\ee
with $\nabla_{\bx} \cdot \vu=0$, $\nabla_{\bx} \cdot \vB=0$ a.e. on $(0,T) \times \mTx \times \mRp$,  and
\be
f\in  L^\infty\big((0,T); L^1(\mTx \times \mRp)\cap L^\infty(\mTx \times \mRp)),
\ee
\end{subequations}
a \textit{weak solution} to the hybrid incompressible Vlasov-MHD system \eqref{eq}, if, for \emph{every} $t \in [0,T]$,  the following are true:
\begin{subequations}
\label{wk}
\begin{align}
\nonumber
&\int_{\mTx \times \mRp}f(t,\bx,\bv)\,g(t,\bx,\bv) \td^3\bx \td^3\bv - \int_{\mTx \times \mRp} \mr f(\bx,\bv) \,  g(0,\bx,\bv) \td^3\bx \td^3\bv - \int_0^t \int_{\mTx \times \mRp} f\,\pt g
\td^3\bx \td^3\bv \td s\\
\label{f:wk}
&=\int_0^t \int_{\mTx \times \mRp}(\bv  f)\cdot \nabla_{\bx} g-\big(( \vu -\bv)\times \vB f\big)\cnp g \td^3\bx \td^3\bv \td s\\
\intertext{for any compactly supported, scalar-valued test function $g\in\bC^1_c([-1,T+1]\times\mTx\times\mRp)$;}
\nonumber
&\int_\mTx \vu(t,\bx) \ccdot \vv(t,\bx) \td^3\bx - \int_\mTx \mr \vu(\bx) \ccdot \vv(0,\bx) \td^3\bx -\int_0^t \int_{\mTx} \vu\ccdot\pt\vv \td^3\bx \td s \\
\nonumber
&=\int_0^t \int_{\mTx}\big( \vu\otimes\vu- \vB\otimes\vB\big)\! : \!\nabla_{\bx}\vv \td^3\bx \td s- \int_0^t \int_\mTx \nabla_{\bx}\vu \!: \!\nabla_{\bx}\vv \td^3\bx \td s
\label{u:wk}\\
&\quad+ \int_0^t \int_{\mTx} \bigg(\vu\times \vB\,\bigg(\intp{f\,} \bigg) - \vB\times\intp{\bv f\,}  \bigg)\ccdot\vv \td^3\bx \td s\\
\intertext{for any  $\mR^3$-valued test function $\vv\in\bC^1([-1,T+1]\times\mTx;\mathbb{R}^3)$ with $\nabla_{\bx}\ccdot \vv=0$; and }
\nonumber
&\int_\mTx \vB(t,\bx) \ccdot \vv(t,\bx)\td^3\bx - \int_\mTx \mr\vB (\bx) \ccdot \vv(0,\bx) \td^3\bx -\int_0^t \int_\mTx \vB \ccdot\pt\vv \td^3\bx \td s\\
\label{B:wk}
&=\int_0^t \int_\mTx \big( \vu\otimes\vB- \vB\otimes\vu\big)\! : \!\nabla_{\bx}\vv \td^3\bx \td s- \int_0^t \int_\mTx \nabla_{\bx}\vB\! :\! \nabla_{\bx}\vv \td^3\bx \td s
\end{align}
\end{subequations}
for any  $\mR^3$-valued test function $\vv\in\bC^1([-1,T+1]\times\mTx;\mathbb{R}^3)$ with $\nabla_{\bx}\ccdot \vv=0$.
Moreover, for almost every $t\in[0,T]$, including $t=0$,
\begin{align}
\label{wk:f:Lr:pos:def}
f(t,\ccdot,\ccdot)\ge 0,\qquad \|\fb(t,\ccdot,\ccdot)\|_{ L^r(\mTx \times \mRp))}\le\|\mr f\|_{ L^r(\mTx \times \mRp)} &\quad\text{ for all $r\in[1,\infty]$},\\
\label{energy:inequality}
{1\over2}\|( \vu(t),\vB(t))\|^2_{L^2(\mTx)} +\Epar {f}(t)+ \|(\nabla_{\bx}\vu,\nabla_{\bx}\vB)\|^2_{L^2((0,t); L^{2}(\mTx))}&\le {1\over2}\| (\mr \vu,\mr\vB) \|^2_{L^2(\mTx)} +\Epar {  \mr   f}.
\end{align}
By setting $t=0$ in \eqref{wk}, we have $(f,\vu,\vB)\big|_{t=0} = (\mr f,\mr\vu,\mr\vB)$  by the du Bois-Reymond lemma, ensuring that the initial conditions at $t=0$ are satisfied.
\end{definition}

\begin{remark}
We emphasize that \eqref{wk} is valid for \emph{every} $t\in[0,T]$, although one normally sees  ``almost every $t$''  in the literature. The rationale seems to be lack of uniform-in-time convergence and the lack of (strong) compactness for $\fe$.  We will remedy this  by using a version of the Lebesgue differentiation theorem in time, and then redefining the solution at the exceptional times, which form of a set of zero Lebesgue measure in $(0,T)$, using the weak formulation.  The only adverse effect of such a redefinition is that the everywhere nonnegativity of $f$ in \eqref{wk:f:Lr:pos} is weakened to $f(t,\ccdot,\ccdot)\ge 0$  for \textit{almost} every $t\in[0,T]$.
\end{remark}

\begin{remark}\label{rm:everyT:cont}
Then, the validity of \eqref{wk} for every $t \in [0,T]$ also implies that any weak solution $(f,\vu,\vB)$ is right-continuous at $t=0$ when regarded as a continuous linear functional over the space $\bC^1_c(\mTx \times \mRp)\times \bC^1(\mTx;\mR^3) \times \bC^1(\mTx;\mR^3)$.
In fact, more is true: as the expressions appearing on the right-hand sides of \eqref{f:wk}, \eqref{u:wk} and \eqref{B:wk}
are absolutely continuous functions of $t \in [0,T]$, the same is true of the expressions on their left-hand sides, for any admissible choice of the test functions $g$ and $\vv$. By considering in particular admissible test functions $g$ and $\bV$ such that $g(t,\bx,\bv)=g_1(t)\,g_2(\bx,\bv)$ where $g_1(t)\equiv1$ for all $t \in [0,T]$, and
$\bV(t,\bx)=v_1(t)\,\bV_2(\bx)$ such that $v_1(t)\equiv1$ for all $t \in [0,T]$, we deduce that
\begin{align*}
t\in [0,T] &\mapsto \int_{\mTx \times \mRp} f(t,\bx,\bv) g_2(\bx,\bv) \td^3\bx \td^3\bv,\\
t\in [0,T]  &\mapsto \int_\mTx \vu(t,\bx)\cdot \bV_2(\bx) \td^3\bx,\\
t\in [0,T]  &\mapsto \int_\mTx \vB(t,\bx)\cdot \bV_2(\bx) \td^3\bx
\end{align*}
are absolutely continuous for any scalar-valued $g_2 \in C^1_c(\mTx \times \mRp)$,
and any $\mR^3$-valued $\bV_2 \in C^1(\mTx)$ satisfying $\nabla_{\bx} \cdot \bV_2=0$.
\end{remark}
\begin{remark}\label{rm:aeT}
Similarly as in the case of the  three-dimensional incompressible Navier--Stokes equations, it is unclear whether the energy inequality \eqref{energy:inequality} can be an equality, and whether it holds for every, rather than almost every, $t \in [0,T]$.
\end{remark}

We are now ready to prove our main result: the existence of large-data finite-energy global weak solutions to the hybrid Vlasov-MHD system.

\begin{theorem}\label{thm:incomp}
For any  $(\mr\vu,\mr\vB)\in \incL(\mTx;\mR^6)$, both of which are divergence-free in the sense of distributions, any pointwise nonnegative $ \mr f \in L^\infty(\mTx\times\mRp)\cap L^1(\mTx\times\mRp)$ with finite $\Epar{\mr f}$, and any $T>0$, there exists a weak solution $( f,\vu, \vB)$ to \eqref{eq} in the sense of Definition \ref{def:wk:inc}.
\end{theorem}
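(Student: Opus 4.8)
The plan is to pass to the limit $n\to\infty$ in the weak formulation \eqref{mn:wk} of the mollified system, along the subsequence $\ep=\ep_n$ furnished by Lemmas \ref{lm:comp:uB} and \ref{lm:comp:f}, and to verify that the resulting triple $(f,\vu,\vB)$ satisfies \eqref{wk} together with the structural properties \eqref{wk:f:Lr:pos:def} and \eqref{energy:inequality}. A preliminary observation makes everything run: since the mollified fields are $\ueen=\uen\mo$ and $\Been=\Ben\mo$, the mollifier-continuity property \eqref{mo:prop:conv:b} upgrades the strong convergence of $(\uen,\Ben)$ in $L^2((0,T);L^5(\mTx;\mR^6))$ from Lemma \ref{lm:comp:uB} to strong convergence of $(\ueen,\Been)$ towards the \emph{same} limit $(\vu,\vB)$ in the same space. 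Thus both the fields and their mollifications converge strongly in $L^2((0,T);L^5)$, and it is this fact, combined with the weak and weak$^*$ convergences of $\{\fen\}$ and of its moments from Lemma \ref{lm:comp:f}, that drives each passage to the limit.

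The linear terms pass to the limit at once: the time-derivative and initial-data pairings by the weak$^*$/weak convergences and the data limits \eqref{conv:mr}, and the dissipation terms $\int\nabla_\bx\uen:\nabla_\bx\vv$ by the weak convergence in $L^2((0,T);H^1)$ tested against the fixed smooth $\vv$. For the quadratic MHD terms, e.g. $\ueen\otimes\uen-\Been\otimes\Ben$ in \eqref{u:wk} and $\ueen\otimes\Ben-\Been\otimes\uen$ in \eqref{B:wk}, I would use that a product of two sequences each converging strongly in $L^2((0,T);L^5(\mTx))$ converges strongly in $L^1((0,T);L^{5/2}(\mTx))$: writing $a_nb_n-ab=(a_n-a)b_n+a(b_n-b)$ and applying H\"older in space (with $\tfrac15+\tfrac15=\tfrac{1}{5/2}$) and in time together with the uniform energy bound \eqref{uni:bound:mn} gives convergence, after which testing against the bounded $\nabla_\bx\vv$ completes the limit.

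The genuinely delicate passages, and the point where I expect the main difficulty, are the trilinear and quadrilinear coupling terms in \eqref{u:mn:wk} and the Lorentz force term in the Vlasov relation \eqref{f:mn:wk}, where only weak$^*$-convergent moments of $\fen$ are paired against fields that converge merely in $L^2((0,T);L^5)$ (not in $L^\infty$, since the mollification constants blow up as $\ep\to0$). For the trilinear term $(\uen\times\Been)\big(\intp{\fen}\big)\cdot\vv$ I would pair the strongly convergent factor $\uen\times\Been\to\vu\times\vB$ in $L^1((0,T);L^{5/2})$ against the zeroth moment, which converges weak$^*$ in $L^\infty((0,T);L^{5/3})$ by \eqref{conv:mo:f}; the H\"older balance $\tfrac25+\tfrac35=1$ is exactly what makes this strong--times--weak$^*$ product converge. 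For the quadrilinear term $\big(\Been\times\intp{\bv\fen}\big)\mo\cdot\vv$ I would first transfer the outer mollifier onto the test function, rewriting it as $\big(\Been\times\intp{\bv\fen}\big)\cdot\vv\mo$, and then regroup the scalar triple product so that the first moment $\intp{\bv\fen}$ (weak$^*$ in $L^\infty((0,T);L^{5/4})$ by \eqref{conv:mo:pf}) is paired against $\vv\mo\times\Been\to\vv\times\vB$ strongly in $L^1((0,T);L^5)$, the exponent $5$ being conjugate to $5/4$, with $\vv\mo\to\vv$ uniformly handling the mollifier. For the Vlasov force term $\big((\ueen-\bv)\times\Been\,\fen\big)\cnp g$ I would expand it and integrate in $\bv$ so that it becomes $(\ueen\times\Been)\cdot\intp{\nabla_\bv g\,\fen}$ minus $\Been\cdot\intp{(\nabla_\bv g\times\bv)\,\fen}$; since $\nabla_\bv g$ and $\bv\times\nabla_\bv g$ are bounded and compactly supported in $\bv$, these weighted moments converge weak$^*$ directly from \eqref{wk:conv:Linf}, and are then paired against the strongly convergent $\ueen\times\Been$ and $\Been$ respectively.

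Once \eqref{wk} is established for almost every $t$, the energy inequality \eqref{energy:inequality} follows by passing to the limit in the mollified energy equality \eqref{energy:mollify}: its right-hand side converges to $\tfrac12\|(\mr\vu,\mr\vB)\|^2_{L^2(\mTx)}+\Epar{\mr f}$ by \eqref{IC:en:bound} and \eqref{conv:mr}, while on the left the weak (respectively weak$^*$) lower semicontinuity of the $L^2$ norms of $(\vu,\vB)$, of $\nabla_\bx(\vu,\vB)$ over $(0,t)\times\mTx$, and of $\Epar{f}$ turns the equality into the required inequality. The $L^r$ bounds and nonnegativity \eqref{wk:f:Lr:pos:def} are inherited directly from \eqref{wk:f:Lr:pos} and \eqref{fb:Lr:le}. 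Finally, to upgrade \eqref{wk} from almost every $t$ to \emph{every} $t\in[0,T]$, I would argue as in the remark following Definition \ref{def:wk:inc}: the right-hand sides of \eqref{f:wk}--\eqref{B:wk} are absolutely continuous in $t$, hence the left-hand sides admit absolutely continuous representatives, and one redefines $(f,\vu,\vB)$ on the Lebesgue-null exceptional set through the weak formulation, at the sole cost of weakening pointwise nonnegativity of $f$ to hold only for almost every $t$. The du Bois-Reymond lemma applied at $t=0$ then recovers the initial data.
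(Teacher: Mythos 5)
Your treatment of the right-hand sides coincides with the paper's own proof: the same strong-times-weak$^*$ pairings, the same H\"older exponents ($\tfrac{2}{5}+\tfrac{3}{5}=1$ and $\tfrac{1}{5}+\tfrac{4}{5}=1$), and the same device of transferring the outer mollifier onto the test function in the quadrilinear term. However, there is a genuine gap at the heart of the argument: you never establish convergence of the \emph{first} (``snapshot'') terms $\int_{\mTx\times\mRp}\fen(t,\bx,\bv)\,g(t,\bx,\bv)\td^3\bx\td^3\bv$ and $\int_{\mTx}\uen(t)\ccdot\vv(t)\td^3\bx$, $\int_{\mTx}\Ben(t)\ccdot\vv(t)\td^3\bx$ at a fixed time $t$, yet you write ``Once \eqref{wk} is established for almost every $t$\dots'' as if this were already done. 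For $(\uen,\Ben)$ the fix is easy: strong $L^2((0,T);L^5)$ convergence yields, along a further subsequence, strong $L^5(\mTx)$ convergence for a.e.\ $t$, which is precisely \eqref{L2:ae:conv}. But for $\fen$ there is \emph{no} strong compactness whatsoever, and the weak$^*$ convergence \eqref{wk:conv:Linf} is a space-time statement that gives no information about $\fen(t,\cdot,\cdot)$ at any fixed $t$. This is exactly the point the paper flags as delicate, and it resolves it by a separate argument: averaging \eqref{f:mn:wk} over $[\tau,\tau+\delta]$, passing $n\to\infty$ in the averaged identity (space-time weak$^*$ convergence applies to the averaged first term, dominated convergence to the averaged right-hand side), and then letting $\delta\to0$ via the Lebesgue differentiation theorem applied simultaneously to a countable family of test functions dense in the $\bC$ norm (Proposition \ref{separ-append1}), so as to obtain a null set ${\mathcal Z}_{f}$ \emph{independent} of $g$. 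Without some such argument, your limit object $f$, defined only as a space-time weak$^*$ limit, cannot be matched to the fixed-time traces appearing in \eqref{f:wk}, and the subsequent redefinition-on-a-null-set step has nothing to start from.

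The same gap propagates into your energy-inequality step. You invoke ``weak$^*$ lower semicontinuity of $\Epar{f}$'' at fixed $t$, but (i) no weak$^*$ convergence of $\fen(t,\cdot,\cdot)$ at a fixed $t$ is available until the snapshot identity above is proved, and (ii) $\Epar{\cdot}$ is a linear functional with the unbounded weight $\tfrac{1}{2}|\bv|^2$, not a norm, so lower semicontinuity is not automatic: the paper derives it from the already-established a.e.-$t$ identity $\int f(t)\,g_k=\lim_{n\to\infty}\int \fen(t)\,g_k$ for a sequence $g_k\in\bC^1_c(\mRp)$ increasing pointwise to $|\bv|^2$, followed by the monotone convergence theorem (nonnegativity of $f$ being essential). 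Similarly, for the fluid kinetic energy at fixed $t$ you need the a.e.\ pointwise-in-time strong convergence \eqref{L2:ae:conv}, which gives genuine convergence rather than mere semicontinuity; weak$^*$ convergence in $L^\infty((0,T);L^2(\mTx;\mR^6))$ alone says nothing at a fixed time. The remainder of your outline --- the coupling terms, the redefinition at exceptional times, and the recovery of the initial data via the du Bois-Reymond lemma --- is sound and matches the paper.
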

\begin{proof}

We have already found a sequence $\{(\fen,\uen,\Ben)\}_{n \geq 1}$ satisfying the assertions of Lemmas \ref{lm:comp:uB} and \ref{lm:comp:f}. This leads to the regularity and integrability properties of the limit $(f,\vu,\vB)$, as required in \eqref{def:wk:reg:int}.

The divergence-free property of $(\vu,\vB)$ required by Definition \ref{def:wk:inc}  is the consequence of the weak convergence in $L^2((0,T);H^1(\mTx;\mR^6))$, as shown in Lemma \ref{lm:comp:uB} (so that $(0,0)=(\nabla_{\bx}\ccdot\uen,\nabla_{\bx}\ccdot\Ben)$ converges weakly to $(\nabla_{\bx}\ccdot\vu,\nabla_{\bx}\ccdot\vB) $ in $L^2((0,T);L^2(\mTx;\mR^6))$ as well).

In the rest of the proof, the strong convergence properties of mollifiers asserted in Lemma \ref{lem:mo} are implicitly  used without specific referencing.

We consider any compactly supported scalar-valued function $g\in\bC^1_c([-1,T+1]\times\mTx\times\mRp)$  and   any $\mR^3$-valued  function $\vv\in\bC^1([-1,T+1]\times\mTx)$ with
$\nabla_{\bx}\cdot \vv=0$.
We shall now proceed to confirm that as $n \rightarrow \infty$ (and therefore $\ep_n\to0$) the limit of each term in the mollified weak formulation
\eqref{mn:wk} is equal to its counterpart in the weak formulation \eqref{wk}.
\begin{itemize}
\item
To prove the convergence towards the right-hand side  of \eqref{f:wk} we proceed as follows. Thanks to the strong convergence result in the function space   $L^\infty((0,T);L^5(\mTx;\mR^6))$ stated in Lemma \ref{lm:comp:uB}, we have
\[ \ueen\times\Been\to\vu\times\vB  \quad\text{strongly in $L^1((0,T);L^{5\over2}(\mTx;\mR^3))$}.\]
We then recall that $g$ is, by hypothesis, a smooth function and has compact support; therefore, both
$\int_{\mRp}|\nabla_{\bv} g|\td^3\bv$ and $\int_{\mRp}|\bv\times \nabla_{\bv} g|\td^3\bv$ are uniformly bounded
in $[0,T]\times \mTx$ as functions of $t$ and $\bx$. Thus we have that, as $n\to\infty$,
\[
\big(( \ueen -\bv)\times \Been \big)\cnp g\to \big(( \vu -\bv)\times \vB\big)\cnp g\;\;\text{ strongly in }L^1((0,T)\times \mTx \times \mRp),
\]
which, together with  the weak* convergence result \eqref{wk:conv:Linf} for $\fen$, implies that
\[
\lim_{n\to\infty}\text{(the right-hand side of \eqref{f:mn:wk})}= \text{(the right-hand side of \eqref{f:wk})}.
\]
\item
To prove the convergence towards the last integral on the right-hand side of  \eqref{u:wk}, we first rewrite the last triple product term appearing in \eqref{u:mn:wk} as  $\big( \vv^{\langle\ep \rangle}\times \Bee   \big) \ccdot \displaystyle\intp{\bv\fe}$; then by the strong convergence result stated in Lemma \ref{lm:comp:uB}, we have that, as $n\to\infty$,
\[
\begin{aligned}
\vv^{\langle\ep \rangle}\times\Been&\to\vv\times \vB &&\text{strongly in $L^2((0,T);L^{5}(\mTx;\mR^3))$, \;\; and}\\
(\uen\times\Been)\ccdot \vv&\to(\vu\times\vB)\ccdot \vv &&\text{strongly in $L^1((0,T);L^{5\over2}(\mTx))$}.
\end{aligned}
\]
Consequently, by the weak* convergence results \eqref{conv:mo:pf}, \eqref{conv:mo:f} for the moments of $\fen$ and noting that  H\"older's inequality can be applied to the difference between the last integrals in \eqref{u:mn:wk} and \eqref{u:wk}
(note that  ${1\over5}+{4\over5}={1\over5}+{1\over5}+{3\over5}=1$), we have that
\[
\lim_{n\to\infty}\text{(last integral of \eqref{u:mn:wk})}= \text{(last integral of \eqref{u:wk})}.
\]
\item The rest of right-hand side of \eqref{u:wk} and the entire right-hand side of \eqref{B:wk} involve terms which are
of one of the following two types:
\begin{itemize}
\item[1.] For terms involving the operation $\otimes$, the convergence of their counterparts in \eqref{mn:wk} follows from the strong $L^\infty((0,T);L^5(\mTx;\mR^6))$ convergence stated in Lemma \ref{lm:comp:uB}.
\item[2.] For the terms including $(\nabla_{\bx}\vu \!:\! \nabla_{\bx}\vv)$ and $(\nabla_{\bx}\vB\! : \!\nabla_{\bx}\vv)$, the convergence of their counterparts in \eqref{mn:wk} follows from the weak convergence in  $L^2((0,T);H^1(\mTx;\mR^6))$, as  stated in Lemma \ref{lm:comp:uB}.
\end{itemize}
\item To prove the convergence towards the terms appearing on the left-hand side of \eqref{wk}, we proceed by combining the strong convergence of the mollified initial data, as in \eqref{conv:mr} and Lemmas \ref{lm:comp:uB} and \ref{lm:comp:f}. We then obtain \emph{all terms on the left-hand side of \eqref{wk}, except the first,} as limits of their counterparts in \eqref{mn:wk} (recall  the hypothesis that $g$ is compactly supported).
\end{itemize}
To summarize our conclusions so far, we have
\be
\label{conv:except}
\begin{aligned}
&\text{for any }\;\;t\in[0,T]\;\;  \text{ and test functions }  g,v  \text{ as specified above,}\\
&\lim_{n\to\infty}\text{(\eqref{f:mn:wk},\,\eqref{u:mn:wk},\,\eqref{B:mn:wk})}= \text{(\eqref{f:wk},\,\eqref{u:wk},\,\eqref{B:wk}),
except the first term in each equation.}
\end{aligned}
\ee
Thus we shall now focus on the first term in each of \eqref{f:mn:wk}, \eqref{u:mn:wk}, \eqref{B:mn:wk}.
First, thanks to the strong $L^2((0,T); L^5(\mTx;\mR^6))$ convergence stated in Lemma \ref{lm:comp:uB}, there exists a subset ${\mathcal Z}_{(\vu,\vB)}\subset[0,T]$
of zero Lebesgue measure  and a subsequence of $\{\ep_n\}_{n \geq 1}$ (not indicated), such that
\begin{align}
\label{L2:ae:conv}
&\text{for any }\;\;t\in[0,T]\backslash {\mathcal Z}_{(\vu,\vB)}, \;\;  \lim_{ n\to\infty}\big\|(\uen(t),\Ben(t))-(\vu(t), \vB(t))\big\|_{L^5(\mTx)}=0,\;\;\text{ and hence,}\\
\label{uB:ae}
&\text{for   $t \in [0,T]\backslash {\mathcal Z}_{(\vu,\vB)}$,  }\lim_{n\to\infty}\text{ (first terms of \eqref{u:mn:wk}, \eqref{B:mn:wk})}= \text{ (first terms of   \eqref{u:wk}, \eqref{B:wk})}.
\end{align}
It then remains to consider the first term in \eqref{f:wk}. In the absence of a strong convergence result for the sequence $\{\fen\}_{n \geq 1}$ the argument in this case is more delicate.

To this end, we take any $\tau \in [0,T)$ and an arbitrarily small $\delta \in (0,T-\tau)$, and consider the integral average of  \eqref{f:mn:wk} over $[\tau,\tau+\delta]$:
\begin{align}
\begin{aligned}\label{time:ave-prior}
&\phantom{ =}{1\over \delta}\dint_{\tau}^{\tau+\delta}\Big[ \int_{\mTx \times \mRp} \big(\fen   \gc \big)(t,\bx,\bv) \td^3\bx \td^3\bv \Big]\td t- \int_{\mTx \times \mRp}
\mr{f}_{\ep_n}(\bx,\bv)\,   \gc(0,\bx,\bv) \td^3\bx \td^3\bv\\
&\qquad ={1\over \delta}\dint_{\tau}^{\tau+\delta} \Big\{\int_0^t \int_{\mTx \times \mRp}\fen\pt g+(\bv  \fen)\cdot \nabla_{\bx} \gc-\big(( \ueen -\bv)\times \Been \fen\big)\cnp \gc \td^3\bx \td^3\bv \td s\Big\} \td t.
\end{aligned}
\end{align}
The first term on the left-hand side here involves integration with respect the $t,\bx,\bv$ variables, so the weak* convergence \eqref{wk:conv:Linf} applies and we thus obtain the desired limit as $n \rightarrow \infty$ (and therefore as $\ep_n\to0$). The second term on the left-hand side also converges to the appropriate limit involving the given initial datum $\mr f$ for $f$. Concerning the right-hand side, the expression in the curly brackets converges pointwise, for every $t\in[\tau,\tau+\delta]$,  as stated in \eqref{conv:except};  it is dominated by a constant that is independent of $\ep_n$, thanks to the uniform energy bound \eqref{uni:bound:mn} and the fact that $g$ is smooth and compactly supported. As the constant function is, trivially, integrable over $[\tau,\tau+\delta]$, by Lebesgue's dominated convergence theorem the right-hand side above also has the desired limit as $n \rightarrow \infty$ (and $\ep_n\to0$). In short, the limit of \eqref{time:ave-prior} is
\be
\label{time:ave}
\begin{aligned}
&{1\over \delta}\dint_{\tau}^{\tau+\delta}\Big[ \int_{\mTx \times \mRp} (f\,\gc)(t,\bx,\bv)\td^3\bx \td^3\bv \Big]\td t- \int_{\mTx \times \mRp} \mr f(\bx,\bv)\,\gc(0,\bx,\bv) \td^3\bx \td^3\bv  \\
&\qquad  ={1\over \delta}\dint_{\tau}^{\tau+\delta}\Big\{\int_0^t \int_{\mTx \times \mRp} f\pt g+(\bv  f )\cdot \nabla_{\bx} \gc-\big(( \vu -\bv)\times \vB f \big)\cnp \gc\td^3\bx \td^3\bv \td s \Big\}\td t\\
&\qquad =:{1\over \delta}\dint_{\tau}^{\tau+\delta} \Big\{\int_0^t F(s)\td s\Big\}.
\end{aligned}
\ee

At the start of the proof we showed the integrability properties of the limit $(f,\vu,\vB)$, as required in \eqref{def:wk:reg:int} of  Definition \ref{def:wk:inc}. The newly defined function $F(s)$ is therefore in $L^1(0,T)$, which immediately implies the  absolute continuity of the mapping  $t \in [0,T] \mapsto\int_0^t F(s)\td s$ appearing in the right-hand side of the above equation. Thus,
\[\lim_{\delta\to0}{1\over \delta}\dint_{\tau}^{\tau+\delta} \Big\{\int_0^t F(s)\td s\Big\}=\int_0^\tau F(s)\td s\;\;\text{ for every }\;\tau\in[0,T).\]
To pass to the limit $\delta \rightarrow 0$ in the first term on the left-hand side of \eqref{time:ave}, we take a countable dense   subset $\{g_n\}_{n \geq 1}$ of  $\bC^1_c([-1,T+1]\times\mTx\times\mRp)$ consisting of compactly supported, scalar-valued test functions. Note
that we only need this countable subset to be  dense with respect to the $\bC([-1,T+1]\times\mTx\times\mRp)$ norm;
the existence of such a sequence $\{g_n\}_{n \geq 1}$ follows from Proposition \ref{separ-append1}.

Then, for a fixed $n$,  by the integrability of $f\gc_n$ and Lebesgue's differentiation theorem,
\[
\lim_{\delta\to0}{1\over \delta}\dint_{\tau}^{\tau +\delta}\Big[ \int_{\mTx \times \mRp}  (f    \gc_n )(t,\bx,\bv) \td^3\bx \td^3\bv \Big]\td t =\int_{\mTx \times \mRp}   (f    \gc_n )(\tau,\bx,\bv)\;\;\text{ for a.e. }\;\tau \in [0,T).
\]
Since the countable union of sets of zero Lebesgue measure is a set of zero Lebesgue measure, we deduce from the countability of $\{\gc_n\}_{n \geq 1}$ the existence of a set ${\mathcal Z}_{f}\subset[0,T)$ of zero Lebesgue measure such that
\be
\label{delta:gn}
\lim_{\delta\to0}{1\over \delta}\dint_{\tau }^{\tau +\delta}\Big[ \int_{\mTx\times\mRp} (f\gc_n \big)(t,\bx,\bv)\td^3\bx \td^3\bv  \Big]\td t=\int_{\mTx \times \mRp}   \big(f\gc_n \big)(\tau,\bx,\bv)\td^3\bx \td^3\bv
\;\;\;\;\forall\, n \in \mathbb{N},\;\; \forall\, \tau\in[0,T)\backslash {\mathcal Z}_{f}.
\ee
Recall that $\{\gc_n\}_{n \geq 1}$ is a dense (with respect to the $\bC$ norm) subset of $\bC^1_c([-1,T+1]\times\mTx\times\mRp)$. Consequently, for each fixed $g$ in the latter space, we can extract a subsequence  $\{\gc_{n_j}\}_{j \geq 1}$ from
$\{g_n\}_{n \geq 1}$ such that $$\|g_{n_j}-g\|_{\bC([-1,T+1]\times\mTx\times\mRp)}\le {1\over j}.$$
This implies that,  for any positive integer $j$ and any $\tau \in [0,T)\setminus {\mathcal Z}_f$,
\[
\begin{aligned}
&\left|{1\over \delta}\dint_{\tau}^{\tau +\delta}\Big[ \int_{\mTx \times \mRp} (f g)(t,\bx,\bv) \td^3\bx \td^3\bv\Big]\td t-\int_{\mTx \times \mRp} (f g)(\tau,\bx,\bv)\td^3\bx \td^3\bv \right|\\
&\quad \leq
\left|{1\over \delta}\dint_{\tau}^{\tau +\delta}\Big[ \int_{\mTx \times \mRp} (f \gc_{n_j})(t,\bx,\bv) \td^3\bx \td^3\bv\Big]\td t-\int_{\mTx \times \mRp} (f \gc_{n_j})(\tau,\bx,\bv)\td^3\bx \td^3\bv \right| +{2\over j}\|f\|_{L^\infty((0,T);L^1(\mTx \times \mRp))}.
\end{aligned}
\]
By \eqref{delta:gn} we then have, for any positive integer $j\in \mathbb{N}$ and all $\tau\in[0,T)\backslash {\mathcal Z}_{f}$, that
\[
\limsup_{\delta\to0}\left|{1\over \delta}\dint_{\tau}^{\tau +\delta}\Big[ \int_{\mTx \times \mRp} (f g)(t,\bx,\bv) \td^3\bx \td^3\bv\Big]\td t-\int_{\mTx \times \mRp} (f g)(\tau,\bx,\bv)\td^3\bx \td^3\bv \right| \leq {2\over j}\|f\|_{L^\infty((0,T);L^1(\mTx \times \mRp))}.
\]
Passing to the limit $j \rightarrow \infty$, we deduce that
\[
\lim_{\delta\to0}{1\over \delta}\dint_{\tau}^{\tau +\delta}\Big[ \int_{\mTx \times \mRp} (f g)(t,\bx,\bv) \td^3\bx \td^3\bv\Big]\td t=\int_{\mTx \times \mRp} (f g)(\tau,\bx,\bv)\td^3\bx \td^3\bv
\quad \mbox{for all $\tau \in [0,T)\backslash {\mathcal Z}_{f}$.}
\]
Combining this with the convergence shown below  \eqref{time:ave},
we have proved that the $\delta\to0$ limit of   \eqref{time:ave} is indeed \eqref{f:wk} for every $t=\tau\in[0,T)\backslash  {\mathcal Z}_{f}$ and a fixed $g$. As the set ${\mathcal Z}_f$ that is chosen in the line above \eqref{delta:gn} is
independent of $g$, we have that
\eqref{f:wk} holds on $[0,T)\setminus {\mathcal Z}_f$ for all admissible test functions $g$.

By combining this assertion with \eqref{conv:except}, \eqref{uB:ae} we finally deduce that
\[
\text{\eqref{wk} holds for every $\tau \in[0,T)\backslash({\mathcal Z}_{(\vu,\vB)}\cup {\mathcal Z}_{f})$;}
\]
in other words, \eqref{wk} hold a.e. on $[0,T]$.

Regarding the ``exceptional times'', at every  $t\in  {\mathcal Z}_{f}$ (resp. $t\in  {\mathcal Z}_{(\vu,\vB)}$), we use \eqref{f:wk} (resp. \eqref{u:wk} and \eqref{B:wk}) to redefine $f$ (resp.  $\vu$ and $\vB$)  as an element in the dual space of $\bC^1_c( \mTx\times\mRp)$ (resp. $\bC^1 (\mTx;\mR^3)$).

In the final part of the proof, we show the physically relevant properties \eqref{wk:f:Lr:pos:def}, \eqref{energy:inequality} for a.e. $t\in[0,T]$; we note that they hold at $t=0$, since the initial conditions are precisely satisfied, which is shown immediately after the inequality \eqref{energy:inequality}.

The nonnegativity of $f$ stated in \eqref{wk:f:Lr:pos:def} follows from \eqref{wk:f:Lr:pos} and the fact that we have only redefined $f(t,\ccdot,\ccdot)$ on a subset of $[0,T]$ of zero Lebesgue measure. The estimate in   \eqref{wk:f:Lr:pos:def} is just a duplication of \eqref{fb:Lr:le}, which is unaffected by the redefinition procedure.

To show the energy inequality \eqref{energy:inequality}, we choose the test function in \eqref{f:wk} from the sequence of nonnegative, $(t,\bx)$-independent functions $\{g_k\}_{k \geq 1}\subset \bC^1_c(\mRp)$, which are nondecreasing pointwise (i.e., $0\le g_1(\bv)\le g_2(\bv)\le\cdots$) and converge to $|\bv|^2$ pointwise. Since we have established that each term but the first one in \eqref{f:wk} is the limit of its counterpart in  \eqref{f:mn:wk} as $n\to\infty$, then so is the first term in \eqref{f:wk}; thus, for a.e. $t\in[0,T]$ we have
\begin{align*}
\int_{\mTx \times \mRp} f(t,\bx,\bv)\,g_k(\bv) \td^3\bx \td^3\bv &=\lim_{n\to\infty} \int_{\mTx \times \mRp} \fen(t,\bx,\bv)\, g_k(\bv) \td^3\bx \td^3\bv\\
&\le \liminf_{n\to\infty} \int_{\mTx \times \mRp} \fen(t,\bx,\bv)\,   |\bv|^2 \td^3\bx \td^3\bv.
\end{align*}
Hence, by the monotone convergence theorem (noting the nonnegativity of $f$ in \eqref{wk:f:Lr:pos:def}), for a.e. $t\in[0,T]$,
\[
\int_{\mTx \times \mRp}   f(t,\bx,\bv)   |\bv|^2 \td^3\bx \td^3\bv \le \liminf_{n\to\infty} \int_{\mTx \times \mRp}   \fen(t,\bx,\bv)\,|\bv|^2 \td^3\bx \td^3\bv.
\]
Combining this with the mollified version of the energy equality \eqref{energy:mollify}, the bound on the initial energy \eqref{IC:en:bound}, the convergence of the fluid energy, which follows from \eqref{L2:ae:conv} and H\"older's inequality, and the weak $L^2((0,T);H^1(\mTx;\mR^6))$ convergence result from Lemma \ref{lm:comp:uB}, we deduce the energy inequality \eqref{energy:inequality} for a.e. $t\in[0,T]$.
\end{proof}

\subsection{Additional properties of weak solutions}
By relying on Definition \ref{def:wk:inc} {\it only}, it is possible to show that weak solutions possess the following additional properties.
\begin{proposition}
Let $(f,\vu,\vB)$ be a weak solution in the sense of Definition \ref{def:wk:inc}. Then, the following properties hold:
\begin{enumerate}
\item[(a)] $t \in [0,T] \mapsto f(t,\ccdot,\ccdot) \in \big(\bC^1_c( \mTx\times\mRp)\big)^*$ and $t \in [0,T] \mapsto (\vu,\vB)(t,\ccdot,\ccdot) \in \big(\bC^1(\mTx;\mR^6)\big)^*$ are absolutely continuous mappings.
\item[(b)] The zeroth and first moment of $f$ satisfy, respectively,
\be
\label{prop:f:pf}
\intp{\fb(\cdot,\cdot,\bv)\,}\in L^{\infty}((0,T); L^{5\over3}(\mTx))\quad \mbox{and}\quad
\intp{|\bv|\,\fb(\cdot,\cdot,\bv)\,}\in L^{\infty}((0,T); L^{5\over4}(\mTx)).
\ee
\item[(c)] The total momentum is conserved, i.e., for almost every $t\in[0,T]$,
\be
\label{prop:cons:mm}
\int_{\mTx \times \mRp} \bv\,f(t,\bx,\bv)\td^3\bx \td^3\bv  + \int_\mTx  \vu(t,\bx) \td^3\bx =  \int_{\mTx \times \mRp} \bv\,\mr f(\bx,\bv) \td^3\bx \td^3\bv  + \int_\mTx \mr \vu(\bx) \td^3\bx.
\ee
\item[(d)] The total mass of energetic particles is conserved, i.e., for almost every $t\in[0,T]$,
\be
\label{prop:cons:mass}
\int_{\mTx \times \mRp} f(t,\bx,\bv) \td^3\bx \td^3\bv  =  \int_{\mTx \times \mRp} \mr f(\bx,\bv) \, p \td^3\bx \td^3\bv.
\ee
\end{enumerate}
\end{proposition}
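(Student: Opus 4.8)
Properties (a) and (b) are quick consequences of the estimates already in hand, whereas the substance lies in (c) and (d), which require testing the weak formulation \eqref{wk} against functions that are not compactly supported in $\bv$---namely $g\equiv1$ for (d) and $g=v_i$ for (c)---so the plan is to approximate these by admissible cut-off test functions and pass to the limit with the help of the moment bounds.

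For (a) I would elaborate Remark \ref{rm:everyT:cont}. Fixing a time-independent $g_2\in\bC^1_c(\mTx\times\mRp)$, equation \eqref{f:wk} reads $\int_{\mTx\times\mRp} f(t)\,g_2=\int_{\mTx\times\mRp}\mr f\,g_2+\int_0^t F_{g_2}(s)\,\td s$ with $F_{g_2}(s)=\int_{\mTx\times\mRp}(\bv f)\ccdot\nabla_\bx g_2-((\vu-\bv)\times\vB f)\cnp g_2$. It suffices to produce a single $\Phi\in L^1(0,T)$, independent of $g_2$, with $|F_{g_2}(s)|\le\Phi(s)\,\|g_2\|_{\bC^1}$, for then $\|f(b)-f(a)\|_{(\bC^1_c)^*}\le\int_a^b\Phi$, which is precisely absolute continuity of $t\mapsto f(t)$ into the dual space. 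Such a $\Phi$ exists because the first moment $\int_{\mTx\times\mRp}|\bv|f$ lies in $L^\infty(0,T)$ by (b), while the coupling part is dominated by $\int_{\mTx}(|\vu|+|\bv|)|\vB|f$, controlled via $\|\vu\|_{L^6}\|\vB\|_{L^6}\|\intp f\|_{L^{3/2}}+\|\vB\|_{L^6}\|\intp{\bv f}\|_{L^{6/5}}$; since $\vu,\vB\in L^2((0,T);H^1(\mTx))\hookrightarrow L^2((0,T);L^6(\mTx))$ and the moments are in $L^\infty_t$, this product is in $L^1(0,T)$. The identical computation with \eqref{u:wk}, \eqref{B:wk} tested against a time-independent divergence-free $\vv$ settles $(\vu,\vB)$. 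Property (b) is then immediate from Proposition \ref{prop:Lr}: by \eqref{wk:f:Lr:pos:def} and \eqref{energy:inequality} one has $\|f(t)\|_{L^\infty(\mTx\times\mRp)}\le\|\mr f\|_{L^\infty(\mTx\times\mRp)}$ and $\Epar f(t)\le\tfrac12\|(\mr\vu,\mr\vB)\|_{L^2(\mTx)}^2+\Epar{\mr f}$ for a.e.\ $t$, so applying \eqref{bd:p0f} and \eqref{bd:pf} to $f(t,\cdot,\cdot)$ and taking the supremum in $t$ gives \eqref{prop:f:pf}.

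The core of (c) and (d) is that the Lorentz coupling force $\intp{(\vu-\bv)\times\vB f}$ enters the fluid momentum balance \eqref{u:wk} and the particle momentum balance with opposite signs, so that adding them cancels it. For (c) I would test \eqref{u:wk} with a constant vector $\vv\equiv\mathbf{c}$ (admissible, as $\nabla_\bx\ccdot\mathbf{c}=0$): the $\otimes$-terms, the viscous term and the $\pt\vv$-term all drop out, leaving $\int_\mTx\vu(t)\ccdot\mathbf{c}-\int_\mTx\mr\vu\ccdot\mathbf{c}=\int_0^t\int_\mTx\big(\intp{(\vu-\bv)\times\vB f}\big)\ccdot\mathbf{c}$. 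On the kinetic side I would test \eqref{f:wk} with $g(\bv)=(\bv\ccdot\mathbf{c})\,\phi_R(\bv)$, where $\phi_R\in\bC^1_c(\mRp)$ equals $1$ on $|\bv|\le R$ and is supported in $|\bv|\le2R$; here $\nabla_\bx g=0$ and $\nabla_\bv g=\mathbf{c}\,\phi_R+(\bv\ccdot\mathbf{c})\nabla_\bv\phi_R$, and letting $R\to\infty$ yields $\int_{\mTx\times\mRp}(\bv\ccdot\mathbf{c})f(t)-\int_{\mTx\times\mRp}(\bv\ccdot\mathbf{c})\mr f=-\int_0^t\int_\mTx\big(\intp{(\vu-\bv)\times\vB f}\big)\ccdot\mathbf{c}$. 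Adding the two displays and letting $\mathbf{c}$ run over the coordinate directions gives \eqref{prop:cons:mm}. For (d) the same cut-off scheme with $g=\phi_R$ (so $\nabla_\bv g=\nabla_\bv\phi_R$) gives $\int_{\mTx\times\mRp}f(t)-\int_{\mTx\times\mRp}\mr f=-\lim_{R\to\infty}\int_0^t\int_{\mTx\times\mRp}((\vu-\bv)\times\vB f)\cnp\phi_R$, and the limit is $0$, which is \eqref{prop:cons:mass}.

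The main obstacle is justifying these two limit passages, i.e.\ showing that the cut-off error is negligible. The key point is the global integrability of the coupling force, $\int_0^T\int_{\mTx\times\mRp}|(\vu-\bv)\times\vB|\,f<\infty$, which follows, exactly as in (a), from $|(\vu-\bv)\times\vB|\le(|\vu|+|\bv|)|\vB|$ together with $\|\vu\|_{L^6}\|\vB\|_{L^6}\|\intp f\|_{L^{3/2}}\in L^1(0,T)$ and $\|\vB\|_{L^6}\|\intp{\bv f}\|_{L^{6/5}}\in L^1(0,T)$ (using $\vu,\vB\in L^2_tL^6_\bx$, the moment bounds of (b), and the torus embeddings $L^{5/3}\hookrightarrow L^{3/2}$, $L^{5/4}\hookrightarrow L^{6/5}$). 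Since $|\nabla_\bv\phi_R|\le C/R$ on the shell $R\le|\bv|\le2R$ and vanishes elsewhere, both cut-off errors---$\int_0^t\int((\vu-\bv)\times\vB f)\ccdot(\bv\ccdot\mathbf{c})\nabla_\bv\phi_R$ for (c) and $\int_0^t\int((\vu-\bv)\times\vB f)\cnp\phi_R$ for (d)---are bounded by $C\int_0^t\int_{|\bv|>R}|(\vu-\bv)\times\vB|\,f$, which tends to $0$ as $R\to\infty$ by absolute continuity of the integral. Dominated convergence (for (c)) and monotone convergence (for (d), using $f\ge0$) dispose of the remaining terms, the left-hand limits using $f(t),\mr f\in L^1(\mTx\times\mRp)$ together with $\intp{|\bv|f}\in L^1(\mTx)$ for a.e.\ $t$ in the case of (c). I would carry out (c) and (d) only for a.e.\ $t\in[0,T]$---the full-measure set on which \eqref{wk} holds for all admissible test functions---which is consistent with the ``almost every $t$'' in \eqref{prop:cons:mm} and \eqref{prop:cons:mass}.
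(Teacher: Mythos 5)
Your proposal is correct and takes essentially the same route as the paper: (a) via the absolute-continuity observation of Remark \ref{rm:everyT:cont}, (b) from Proposition \ref{prop:Lr} combined with the energy inequality and the $L^\infty$ bound, and (c), (d) by testing the kinetic equation with compactly supported approximations of $\bv\cdot\mathbf{c}$ and of $1$, testing the fluid equation with a constant vector, passing to the limit by dominated/monotone convergence, and letting the Lorentz coupling terms cancel upon adding the two balances. The only difference is presentational: where the paper works with an abstract dominated sequence $\{\tilde g_k\}$ and simply asserts that every integrand is dominated by an integrable function, you exhibit the explicit cut-offs $(\bv\cdot\mathbf{c})\phi_R$, $\phi_R$ and verify the key integrability $\int_0^T\!\int |(\vu-\bv)\times\vB|\,f<\infty$ via H\"older, the moment bounds and the embedding $H^1(\mTx)\hookrightarrow L^6(\mTx)$, which makes the domination argument fully explicit.
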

\begin{proof}
We prove the assertions item by item.
   \begin{enumerate}
\item[(a)] The stated absolute continuity properties directly follow from Remark \ref{rm:everyT:cont}.
\item[(b)] \eqref{prop:f:pf} is a consequence of Proposition \ref{prop:Lr}, Definition \ref{def:wk:inc} and the energy inequality \eqref{energy:inequality}.
\item[(c)] We choose the test function appearing in \eqref{f:wk} from the sequence of $(t,\bx)$-independent functions $\{\tilde g_k\}_{k \geq 1}\subset \bC^1_c(\mRp)$   so that $\{\tilde g_k\}_{k \geq 1}$ converges to $v_1$ (the first coordinate of $\bv=(v_1,v_2,v_3)$) pointwise, $\{\nabla_{\bv}\tilde g_k\}_{k \geq 1}$ converges to  $(1,0,0)^{\rm T}$  pointwise, and
\be
\label{tld:gk}| \tilde g_k(\bv)|\le|\bv_1|, \quad|\nabla_{\bv}\tilde g_k(\bv)|\le1 .
\ee
Then, by Definition \ref{def:wk:inc}, for a.e. $t\in[0,T]$, the measurable function $f(t,\ccdot,\ccdot)$ satisfies
\begin{align*}
\int_{\mTx \times \mRp} f(t,\bx,\bv)\,  \tilde g_k(\bv) \td^3\bx \td^3\bv - \int_{\mTx \times \mRp} \mr f(\bx,\bv)\, \tilde  g_k(\bv) \td^3\bx \td^3\bv\\
= -\int_0^t \int_{\mTx \times \mRp} \big(( \vu -\bv)\times \vB f\big)\cnp \tilde g_k \td^3\bx \td^3\bv \td s.
\end{align*}
By  \eqref{def:wk:reg:int}, \eqref{prop:f:pf},  \eqref{tld:gk},  each of the integrands is uniformly bounded by an integrable
function, so that by taking the $k\to\infty$ limit and applying Lebesgue's dominated convergence theorem we have
\begin{align*}
\qquad\qquad\int_{\mTx \times \mRp}f(t,\bx,\bv)\, v_1 \td^3\bx \td^3\bv  - \int_{\mTx \times \mRp} \mr f(\bx,\bv)\, v_1 \td^3\bx \td^3\bv \\
= -\int_0^t \int_{\mTx \times \mRp} \big(( \vu -\bv)\times \vB f\big)\cnp v_1 \td^3\bx \td^3\bv \td s.
\end{align*}
We then choose the test function $\bv=(1,0,0)^{\rm T}$ in \eqref{u:wk} and add the resulting equality to the one above to deduce, for a.e. $t\in[0,T]$, that
\[
\int_{\mTx} f(t,\bx,\bv)\, v_1 \td^3\bx \td^3\bv  +\int_\mTx U_1(t,\bx) \td^3\bx   =  \int_{\mTx \times \mRp} \mr f(\bx,\bv)\, v_1 \td^3\bx \td^3\bv + \int_\mTx \mr  \vu_1(\bx) \td^3\bx,
\]
where $U_1$ is the first component of the velocity vector $\bU=(U_1, U_2, U_3)^{\rm T}$. Likewise, we can show the conservation of the second and third components of the total momentum, hence proving \eqref{prop:cons:mm}.
\item[(d)]
To show   the conservation of the mass of energetic particles, as stated in \eqref{prop:cons:mass}, we change the sequence $\{\tilde g_k\}_{k \geq 1} \subset \bC^1_c(\mTx\times\mRp)$, used in the proof of item (c) above, so that its elements are dominated by, and converge to, $1$ pointwise, with $\nabla_{\bv}\tilde g_k$ dominated by $1$ and converging to $0$ pointwise. We skip the remaining steps, as they are an easier version of the proof of item (c) above.
\end{enumerate}
That completes the proof of the proposition.
\end{proof}

\subsection*{Acknowledgments} C.T. acknowledges financial support from the Leverhulme Trust Research Project Grant No. 2014-112, and by the London Mathematical Society Grant No. 31439 (Applied Geometric Mechanics Network). The authors thank the anonymous referees for their insightful comments and careful proofreading, which greatly helped to improve the structure and rigor of this paper.

\appendix

\section{\bf Product rules}\label{app:prod}
The following identities are useful variants of the product rule, where $f$ and $g$ are scalar-valued functions while $\vec u$ and $\vec v$ are $\mR^3$-valued functions:
 %
\begin{align}
\label{A1}
\nabla (fg) &= f\nabla g + g\nabla f;\\
\label{A2}
\nabla ({\bu} \cdot  \bv) &= \bu \times (\nabla \times \bv) + \bv \times (\nabla \times \bu) + ( \bu \cdot \nabla_{\bx}) \bv + (\bv \cdot\nabla )\bu;\\
\label{A3}
\nabla\cdot (f \bv) &= f (\nabla\cdot\bv) + \bv \cdot (\nabla f);\\
\label{A4}
\nabla\cdot (\bu \times \bv)& = \bv \cdot (\nabla \times \bu) - \bu \cdot (\nabla \times \bv);\\
\label{A5}
\nabla \times (f \bv) &= (\nabla f) \times \bv + f (\nabla \times \bv);\\
\label{A6}
\nabla \times (\bu \times \bv) &= \bu \, (\nabla \cdot \bv) - \bv \, (\nabla \cdot \bu) + (\bv \cdot \nabla) \, \bu - (\bu \cdot \nabla) \, \bv;\\
\label{A7}
(\nabla \times \bu)\times\bu&=(\bu\cdot \nabla)\bu-{1\over2}\nabla |\bu|^2.
\end{align}

\section{\bf The linear(ized) incompressible MHD system}

Let $\pair{\cdot,\cdot}$ denote the usual $L^2(\mTx)$ inner product of scalar-valued or $\mR^3$-valued   functions. We shall prove the following
result.
\begin{lemma}
\label{thm:linearMHD}
Given any $T>0$, consider the following forced, linear(ized) incompressible MHD system over the domain $[0,T]\times\mTx$:
\be
\label{fMHD}
\left\{
\begin{aligned}
\pt\vu+\ba\cdot \nabla_{\bx}\vu-\bb \cdot \nabla_{\bx}\vB- \Delta_{\bx}\vu &=\vu\times \bg+\bh -\nabla_{\bx} \mathcal{P}\quad &&\quad\textnormal{(subject to }\nc\vu=0), \\
\pt \vB+\ba\cdot \nabla_{\bx}\vB-\bb\cdot \nabla_{\bx}\vu- \Delta_{\bx}\vB&=\bh_1-{ \nabla_{\bx} \mathcal{P}_\vB}\quad&&\quad\textnormal{(subject to }\nc\vB=0),
\end{aligned}
\right.
\ee
\[
\textnormal{with divergence-free initial data}\quad ( \vu, \vB)\IC=  (\mr\vu,\mr\vB) \in\Cinf(\mTx;\mathbb{R}^6),
\]
where $\ba$, $\bb$, $\bg$, $\bh$, $\bh_1 \in \bigcap_{m \geq 0}\mathcal{C}([0,T];\mathcal{C}^m(\mathbb{T}^3; \mathbb{R}^3))$
are given $\mR^3$-valued functions with $\nc \ba=\nc \bb=0$.

Then, this system admits a   classical solution
\[(\vu,\vB) \in \bigcap_{m \geq 0}\mathcal{C}([0,T];\mathcal{C}^m(\mathbb{T}^3; \mathbb{R}^6)) \]
that satisfies
\be
\label{energy:fMHD}
\|(\vu,\vB) \|^2_{L^2(\mTx)}\Big|_0^T +2\int_0^T  \|(\nabla_{\bx}\vu,\nabla_{\bx}\vB) \|^2_{L^2(\mTx)} \td t=
 2\int_0^T\pair{\vu ,\bh}+\pair{\vB ,\bh_1}\td t,
\ee
\begin{align}
\|(\nabla_{\bx}\vu,\nabla_{\bx}\vB) \|^2_{L^2(\mTx)}\Big|_0^T &+ \int_0^T\!  \|(\pt\vu,\pt\vB) \|^2_{L^2(\mTx)} \td t\nonumber\\
\label{energy:pt}
&\le2\sup_{[0,T]\times\mTx}\{|\ba|^2,|\bb|^2,|\bg|^2,1\}\int_0^T \!\|(\nabla_{\bx}\vu,\nabla_{\bx}\vB,\vu,\bh,\bh_1)\|_{L^2(\mTx)}^2\td t.
\end{align}
\end{lemma}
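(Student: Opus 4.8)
The plan is to treat \eqref{fMHD} as a coupled \emph{linear} parabolic system whose principal part is the componentwise heat operator $\pt-\Delta_{\bx}$, with the cross terms $\bb\cdot\nabla_{\bx}(\cdot)$ and the Lorentz-type term $\vu\times\bg$ entering only at lower order, and to establish existence, regularity, and the two energy identities separately. For existence I would apply the Leray--Helmholtz projector $\mathbb P$ onto divergence-free fields, which annihilates $\nabla_{\bx}\mathcal P$ and $\nabla_{\bx}\mathcal P_\vB$ and, since $\mathbb P$ commutes with $\Delta_{\bx}$ on $\mTx$, turns \eqref{fMHD} into an abstract evolution equation $\pt(\vu,\vB)=\mathbb P[\Delta_{\bx}(\vu,\vB)+\text{lower order}]$ on the space of divergence-free $L^2$ fields. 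I would then run a Galerkin scheme in a basis of divergence-free trigonometric polynomials on $\mTx$, derive the uniform a priori bounds furnished by the energy identity below, and extract a weakly convergent subsequence; passing to the limit is immediate because the system is linear and the coefficients $\ba,\bb,\bg,\bh,\bh_1$ are fixed smooth functions. Once a weak solution is in hand, I would bootstrap: since all coefficients and data lie in $\bigcap_m\bC([0,T];\bC^m(\mTx))$, differentiating in $\bx$ and repeating the energy argument on $D^k_\bx(\vu,\vB)$ yields uniform bounds on all spatial derivatives, whence $(\vu,\vB)\in\bigcap_m\bC([0,T];\bC^m(\mTx;\mR^6))$ by Sobolev embedding, with the time-regularity read off from the equation and the pressure recovered from $\Delta_{\bx}\mathcal P=\nc(\text{RHS})$.

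For \eqref{energy:fMHD} I would test the first equation of \eqref{fMHD} with $\vu$ and the second with $\vB$ in $L^2(\mTx)$ and add. The decisive structural cancellations are: (i) $\pair{\nabla_{\bx}\mathcal P,\vu}=\pair{\nabla_{\bx}\mathcal P_\vB,\vB}=0$ since $\nc\vu=\nc\vB=0$; (ii) $\pair{\ba\cdot\nabla_{\bx}\vu,\vu}=\tfrac12\int_{\mTx}\ba\cdot\nabla_{\bx}|\vu|^2=0$ and likewise for $\vB$, because $\nc\ba=0$; (iii) the two cross terms combine as $-\pair{\bb\cdot\nabla_{\bx}\vB,\vu}-\pair{\bb\cdot\nabla_{\bx}\vu,\vB}=-\int_{\mTx}\bb\cdot\nabla_{\bx}(\vu\cdot\vB)=0$, again by $\nc\bb=0$; and (iv) $\pair{\vu\times\bg,\vu}=0$ by pointwise orthogonality. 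What survives is $\tfrac12\ddt(\|\vu\|^2_{L^2(\mTx)}+\|\vB\|^2_{L^2(\mTx)})+\|\nabla_{\bx}\vu\|^2_{L^2(\mTx)}+\|\nabla_{\bx}\vB\|^2_{L^2(\mTx)}=\pair{\vu,\bh}+\pair{\vB,\bh_1}$, which upon integration over $[0,T]$ is exactly \eqref{energy:fMHD}.

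For \eqref{energy:pt} I would instead test the two equations with $\pt\vu$ and $\pt\vB$. The pressure terms again drop because $\nc\pt\vu=\pt(\nc\vu)=0$, while $-\pair{\Delta_{\bx}\vu,\pt\vu}=\tfrac12\ddt\|\nabla_{\bx}\vu\|^2_{L^2(\mTx)}$ produces the boundary-in-time contribution on the left. Every remaining term carries one factor of $\pt\vu$ or $\pt\vB$; bounding these by Cauchy--Schwarz, using the pointwise estimates $|\ba|,|\bb|,|\bg|\le(\sup_{[0,T]\times\mTx}\{|\ba|^2,|\bb|^2,|\bg|^2,1\})^{1/2}$, and applying Young's inequality to absorb the $\|(\pt\vu,\pt\vB)\|^2_{L^2(\mTx)}$ contributions into the left-hand side yields, after integration in time, an inequality of the form \eqref{energy:pt} (the precise numerical constant being a matter of routine bookkeeping that is irrelevant to the subsequent compactness argument).

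I expect the only genuine work to lie in the regularity bootstrap: making the Galerkin estimates uniform in the truncation level at every differentiation order, and confirming that the first-order coupling $\bb\cdot\nabla_{\bx}(\cdot)$ and the zeroth-order term $\vu\times\bg$ do not obstruct the parabolic gain of smoothness. By contrast, the two energy identities are essentially immediate once the cancellations (ii)--(iii), which rest on $\nc\ba=\nc\bb=0$, are recognized; indeed these very cancellations are what make the a priori bounds close, and so they also underpin the existence argument itself.
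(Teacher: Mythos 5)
Your proposal is correct and follows essentially the same route as the paper: a Galerkin scheme in a divergence-free eigenbasis of $\Delta_{\bx}$ on $\mTx$ (the paper's projectors $\scP_N$ playing the role of your Leray projector), energy estimates at every differentiation order made uniform in the truncation level, passage to the limit, and then the two energy statements obtained exactly as you describe, by testing with $(\vu,\vB)$ and with $(\pt\vu,\pt\vB)$ and exploiting the cancellations (i)--(iv). The only organizational difference is that the paper extracts the smooth limit via an Arzel\`a--Ascoli equicontinuity argument (using uniform bounds on both $(\vu^N,\vB^N)$ and $\pt(\vu^N,\vB^N)$ in high Sobolev norms) rather than by weak compactness followed by a posteriori bootstrapping, which sidesteps the issue, acknowledged in your last paragraph, of differentiating an equation satisfied only by a weak solution.
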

\begin{proof}
Let $\{\bW_j\}_{j\geq 1}\subset\Cinf(\mTx;\mR^3)$ be an orthonormal basis of eigenfunctions of $\Delta_{\bx}$ in the space of divergence-free $\mathbb{R}^3$-valued vector fields in $L^2(\mTx;\mathbb{R}^3)$.
For any, not necessarily divergence-free, vector field $\vv\in \bC(\mTx;\mR^3)$ and an integer $N \geq 1$, we define the projection
\[
\scP_N[\vv]:=\sum_{j=1}^N  \pair{  \vv,\bW_j }\bW_j.
\]
Further, we define an approximation  of $(\vu,\vB)$ by
\[
\bpm\vu^\N(t,\bx)\\\vB^\N(t,\bx)\epm=\sum_{j=1}^N\bpm \fru_j(t)\\\frB_j(t)\epm\bW_j(\bx),\qquad t \in [0,T], \;\; \bx \in \mTx,
\]
that satisfies, for $i=1,2,\dots,N$,
\[
\left\{
\begin{aligned}
\ddt \fru_i(t)&=\sum_{j=1}^N  \left[\pair{\bW_i,\ba\cdot \nabla_{\bx} \bW_j }\fru_j-\pair{\bW_i,\bb\cdot \nabla_{\bx} \bW_j}\frB_j+\pair{\bW_i,\Delta_{\bx} \bW_j }\fru_j+\pair{\bW_i,\bW_j\times \bg }\fru_j \right]+\pair{\bW_i,  \bh }, \\
\ddt \frB_i(t)&=\sum_{j=1}^N  \left[\pair{\bW_i,\ba\cdot \nabla_{\bx} \bW_j }\frB_j-\pair{\bW_i,\bb\cdot \nabla_{\bx} \bW_j}\fru_j+ \pair{\bW_i,\Delta_{\bx} \bW_j }\frB_j\right]+\pair{\bW_i,  \bh_1 },
\end{aligned}
\right.
\]
\[
\textnormal{with initial data}\quad ( \vu^\N, \vB^\N)\IC =\scP_N (\mr\vu,\mr\vB).
\]
This is a closed, $2N$-by-$2N$ system of linear ordinary differential equations for the unknowns $\{ \fru_j(t),\frB_j(t)\}_{j=1}^N$ with coefficients depending on $\ba, \bb, \bg, \bh$, $\bh_1$, which are in $\bC([0,T]; \bC^m(\mTx;\mR^3))$ for any $m\ge0$. Therefore, it admits a  solution satisfying
\be
\label{uB:N:Cm}
(\vu^\N,\vB^\N)\in\bC([0,T]; \bC^m(\mTx;\mR^6)),\quad \pt(\vu^\N,\vB^\N)\in\bC([0,T]; \bC^m(\mTx;\mR^6)),\quad
\text{ for all }m\ge0.
\ee
We note that the above system of linear ordinary differential equations is equivalent to
\be
\label{uB:cPN}
\left\{
\begin{aligned}
\pt\vu^\N&= \scP_N\left[-\ba\cdot \nabla_{\bx}\vu^\N+\bb\cdot \nabla_{\bx}\vB^\N+ \Delta_{\bx}\vu^\N+\vu^\N\times \bg+  \bh\right],\\
\pt \vB^\N&=\scP_N\left[  -\ba\cdot \nabla_{\bx}\vB^\N+\bb\cdot \nabla_{\bx}\vu^\N+ \Delta_{\bx}\vB^\N+\bh_1\right].
\end{aligned}
\right.
\ee

The regularity of  $(\vu^\N,\vB^\N) $ in \eqref{uB:N:Cm} and the $\bW_j$'s being eigenfunctions of $\Delta_{\bx}$ allow us to apply the energy method to \eqref{uB:cPN} and integrate by parts (noting that $\nc \ba=\nc \bb=0$) to obtain
\[
 \ddt\big(\|\vu^\N\|^2_{L^2(\mTx)}+\|\vB^\N\|^2_{L^2(\mTx)}\big)=
 -2\big(\|\nabla_{\bx}\vu^\N\|^2_{L^2(\mTx)}+\|\nabla_{\bx}\vB^\N\|^2_{L^2(\mTx)}\big)+2\pair{\vu^\N,\bh}+2\pair{\vB^\N,\bh_1}.
\]
We then apply the energy method to the action of the differential operator $\Delta_{\bx}^m$ on \eqref{uB:cPN},
noting that spatial differential operators commute with the projector $\mathcal{P}_N$ thanks to the periodic boundary conditions with respect to $\bx$, and we integrate by parts and apply the Cauchy--Schwarz inequality to obtain
\[
\begin{aligned}
\ddt\bigg(\|\Delta_{\bx}^m\vu^\N\|^2_{L^2(\mTx)}+\|\Delta_{\bx}^m\vB^\N\|^2_{L^2(\mTx)}\bigg)&\le  C\|(\ba,\bb)\|_{H^m(\mTx)}\|(\vu^\N,\vB^\N)\|_{H^m(\mTx)}\\
&\quad+C\|\vu^\N\|_{H^m(\mTx)}\|\bh\|_{H^m(\mTx)}+C\|\vB^\N\|_{H^m(\mTx)}\|\bh_1\|_{H^m(\mTx)},
\end{aligned}
\]
where  $m\ge 4$ and also $C$ denotes a generic constant that may depend on $m$ but is independent of $N$.
Since $\ba,\bb \in \mathcal{C}^m(\mTx;\mathbb{R}^3)$ and $(\vu,\vB)\big|_{t=0} \in \mathcal{C}^m(\mTx;\mathbb{R}^6)$
for all $m \geq 0$, we can
integrate the sum of the above equality and inequality in time to obtain
\[
\big\|(\vu^\N,\vB^\N)\big\|_{ \bC([0,T],H^m(\mTx;\mR^6))} \leq F_1\Big(T,(\vu,\vB)\big|_{t=0},\ba,\bb,\bg,\bh,\bh_1,m\Big)\qquad \forall\,N \in \mathbb{N}.
\]
Here and below, $F_1,F_2$ are functions that are independent of $N$.  Also, take the $L^2(\mTx;\mR^3)$ inner product of $\pt \vu^\N$ and $\pt \vB^\N$ with the respective equations of \eqref{uB:cPN}, apply the Cauchy--Schwarz
inequality and combine the resulting bound with the previous estimate to deduce that
\[
\big\|\pt(\vu^\N,\vB^\N)\big\|_{ \bC([0,T],H^{m-2}(\mTx))} \leq F_2\Big(T,(\vu,\vB)\big|_{t=0},\ba,\bb,\bg,\bh,\bh_1,m\Big)\qquad \forall\,N \in \mathbb{N}.
\]
Thus, these two  uniform-in-$N$ estimates imply that   the family $\{(\vu^\N,\vB^\N)\}_{N\geq 1}$, viewed as a sequence of continuous mappings from
$[0, T ]$ into $H^{m-2}(\mTx;\mR^6)$, is equicontinuous. Therefore, by the Arzel\`a--Ascoli theorem,
there exists a pair
\[
(\vu,\vB)\in \bC([0, T ];\bC^{m-4}(\mTx;\mR^6))
\]
such that upon subtracting a subsequence and using the embedding $H^{m-2}(\mTx;\mR^3) \subset \bC^{m-4}(\mTx;\mR^3)$,
\be
\label{limit:uB}
\lim_{N\to\infty}( \vu^\N, \vB^\N) = (\vu,\vB) \;\;\text{ strongly and uniformly in }  \bC([0, T ];\bC^{m-4}(\mTx;\mR^6)).
\ee
By recalling that $\{\bW_j\}_{j \geq 1}$ are, by definition, divergence-free, we deduce that the above limit $(\vu,\vB)$ also satisfies the divergence-free condition for all $t\in[0,T]$. Moreover, thanks to the choice of initial data $( \vu^\N, \vB^\N)\IC = \scP_N (\mr\vu,\mr\vB)$ and the completeness of the basis $\{\bW_j\}_{j \geq 1}$, we have  $( \vu , \vB )\IC = (\mr\vu,\mr\vB)$.

In order to show that $(\vu,\vB)$, found in this way, does indeed satisfy the equation \eqref{fMHD}, we only take $\vB$ for example.
We integrate the second equation of \eqref{uB:cPN} with respect to $t$ from 0 to $\tau\le T$,  apply the projection $\scP_{N_1}$ and use $\scP_{N_1}\scP_N=\scP_{\min\{N,N_1\}}$ to deduce that
\[
\scP_{N_1} \vB \Big|_0^{\tau} =\int_0^{\tau}\scP_{N_1}  [  -\ba\cdot \nabla_{\bx}\vB^\N+\bb\cdot \nabla_{\bx}\vu^\N+ \Delta_{\bx}\vB^\N +\bh_1]\td t\quad\text{ for any }N\ge N_1.
\]
%
By holding $N_1 \geq 1$ fixed and letting  $N\to\infty$,  we have (thanks to  \eqref{limit:uB}),
\[
\scP_{N_1}\vB \Big|_0^{\tau} = \int_0^{\tau} \scP_{N_1} [-\ba\cdot \nabla_{\bx}\vB +\bb\cdot \nabla_{\bx}\vu + \Delta_{\bx}\vB+\bh_1] \td t
\]
for any $N_1\geq1$ and any $\tau \in [0,T]$, and therefore $\vB$ is a solution to the second equation in \eqref{fMHD}.

The energy equality \eqref{energy:fMHD} is simply a consequence of taking  the $L^2(\mTx;\mR^3)$ inner product of the first and second equation in \eqref{fMHD} with $\vu$ and $\vB$, respectively, adding up, performing integrations by parts (thanks to the spatial regularity of every term), and cancellation using $\nc\vu=\nc\vB=0$.

Finally, for \eqref{energy:pt}, we use the first equation (which is the more difficult one) of \eqref{fMHD} as an example. The regularity of each term allows us to take its  $L^2(\mTx;\mR^3)$ inner product with $\pt\vu$ and perform integrations by parts to arrive at
\[
\begin{aligned}
\|\pt\vu\|_{L^2(\mTx)}^2+{1\over2}\ddt\|\nabla_{\bx}\vu\|_{L^2(\mTx)}^2&=\pair{\pt\vu,-\ba\cdot \nabla_{\bx}\vu+\bb\cdot \nabla_{\bx}\vB +\vu\times \bg+\bh }\\
&\le\|\pt\vu\|_{L^2(\mTx)}\|(\nabla_{\bx}\vu,\nabla_{\bx}\vB,\vu,\bh)\|_{L^2(\mTx)}\,\sup\{|\ba|,|\bb|,|\bg|,1\}\\
&\le{1\over2}\|\pt\vu\|_{L^2(\mTx)}^2+{1\over2}\|(\nabla_{\bx}\vu,\nabla_{\bx}\vB,\vu,\bh)\|_{L^2(\mTx)}^2\,\sup\{|\ba|^2,|\bb|^2,
|\bg|^2,1\}.
\end{aligned}
\]
We treat the second equation of \eqref{fMHD} similarly and add the resulting inequality to the one above, and finally integrate the resulting sum in time to deduce \eqref{energy:pt}.
\end{proof}

\section{\bf Separability of $\bC_c^1([T_1,T_2] \times \mTx\times\mRp)$}\label{app:C}

For $T_1 < T_2$, denote by $\bC_c^1([T_1,T_2] \times \mTx\times\mRp)$ the set of all real-valued, compactly supported functions in $\bC^1([T_1,T_2] \times \mathbb{R}^3 \times \mathbb{R}^3)$, which are $2\pi$-periodic with respect to their second argument, $\bx$, for all $(t,\bv) \in [T_1,T_2] \times \mathbb{R}^3$. Our goal is to show that $\bC_c^1([T_1,T_2] \times \mTx\times\mRp)$ is separable with respect to the $\bC$ norm.  We shall rely on the classical Stone--Weierstrass theorem.

\begin{theorem}[Stone--Weierstrass theorem]
Suppose that $\mU$ is a compact Hausdorff space and $\sA$ is a subalgebra of the space of real-valued continuous functions $\bC(\mU; \mR)$, which contains a nonzero constant function. Then $\sA$ is dense in $\bC(\mU; \mR)$ if, and only if, it separates points.
\end{theorem}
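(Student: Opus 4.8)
The plan is to prove the nontrivial (``if'') implication—that a point-separating subalgebra containing a nonzero constant is dense—since the converse is immediate: on a compact Hausdorff space $\mU$, Urysohn's lemma supplies, for any distinct $x,y$, a function $\phi\in\bC(\mU;\mR)$ with $\phi(x)\ne\phi(y)$, and if $\sA$ is dense one may approximate $\phi$ within $\tfrac13|\phi(x)-\phi(y)|$ by an element of $\sA$, which then still separates $x$ and $y$. Throughout I would replace $\sA$ by its uniform closure $\overline{\sA}$, which is again a subalgebra (addition and multiplication are continuous for the sup norm) and is closed; the goal then becomes $\overline{\sA}=\bC(\mU;\mR)$. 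Note also that, since $\sA$ contains a nonzero constant and is closed under scalar multiplication, it in fact contains \emph{every} constant function.

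The crucial analytic step—and the one I expect to be the main obstacle—is to show that $\overline{\sA}$ is closed under absolute value, i.e. $f\in\overline{\sA}\Rightarrow|f|\in\overline{\sA}$. I would obtain this from a uniform polynomial approximation of $t\mapsto|t|$ on a bounded interval. Concretely, to approximate $\sqrt{s}$ on $[0,1]$ I would use the recursion $p_0\equiv0$, $p_{n+1}(s)=p_n(s)+\tfrac12\bigl(s-p_n(s)^2\bigr)$, prove by induction that $0\le p_n(s)\le\sqrt{s}$, deduce that $p_n(s)$ increases monotonically to $\sqrt{s}$ pointwise, and upgrade to uniform convergence on the compact interval $[0,1]$ by Dini's theorem. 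Writing $|t|=\sqrt{t^2}$ then yields polynomials $q_n$ with $q_n(t)\to|t|$ uniformly on any interval $[-M,M]$. Applying this with $t=f/\|f\|_\infty$ shows that $q_n(f)\in\overline{\sA}$ (as $\overline{\sA}$ is an algebra containing constants) converges uniformly to $|f|$, whence $|f|\in\overline{\sA}$.

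Once absolute values are available, $\overline{\sA}$ becomes a lattice via the identities $\max(f,g)=\tfrac12(f+g+|f-g|)$ and $\min(f,g)=\tfrac12(f+g-|f-g|)$, and the remainder is a compactness argument. Given $h\in\bC(\mU;\mR)$ and $\ep>0$, for each pair $x,y$ I would use point-separation together with the presence of constants to produce $g_{x,y}\in\overline{\sA}$ with $g_{x,y}(x)=h(x)$ and $g_{x,y}(y)=h(y)$ (an affine combination $\alpha\phi+\beta$ of a separating function $\phi$ does the job when $x\ne y$, and a constant when $x=y$). Fixing $x$, the open sets $\{z:g_{x,y}(z)<h(z)+\ep\}$ cover $\mU$; a finite subcover and the lattice operation $g_x:=\min_i g_{x,y_i}$ yield $g_x\in\overline{\sA}$ with $g_x<h+\ep$ everywhere and $g_x(x)=h(x)$. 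Then the open sets $\{z:g_x(z)>h(z)-\ep\}$ cover $\mU$; a finite subcover and $g:=\max_j g_{x_j}$ give $g\in\overline{\sA}$ with $\|g-h\|_\infty\le\ep$. Since $\ep>0$ is arbitrary and $\overline{\sA}$ is closed, $h\in\overline{\sA}$, so $\overline{\sA}=\bC(\mU;\mR)$ and the proof is complete.
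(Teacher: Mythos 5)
Your proposal is correct, but note that the paper does not actually prove this statement: the Stone--Weierstrass theorem is quoted in Appendix C as a classical black-box result, used only to establish the separability facts (Propositions \ref{separ-append-compact}--\ref{separ-append1}) needed for the density argument in the main existence proof. So there is no paper proof to compare against; what you have supplied is the canonical textbook argument, and it is complete. Each of the potentially delicate steps checks out: for the converse you correctly invoke Urysohn's lemma (legitimate since a compact Hausdorff space is normal) together with a $\tfrac13$-triangle-inequality argument; the induction for the recursion $p_{n+1}(s)=p_n(s)+\tfrac12\bigl(s-p_n(s)^2\bigr)$ works because $\sqrt{s}-p_{n+1}(s)=\bigl(\sqrt{s}-p_n(s)\bigr)\bigl(1-\tfrac12(\sqrt{s}+p_n(s))\bigr)$ with the second factor nonnegative on $[0,1]$, so $0\le p_n\uparrow\sqrt{s}$ and Dini's theorem applies; writing $|t|=\sqrt{t^2}$ and rescaling by $\|f\|_\infty$ is fine (and since $p_n(0)=0$ the approximating polynomials even have zero constant term, though your observation that $\overline{\sA}$ contains all constants makes this irrelevant); the lattice identities and the two successive finite-subcover arguments (first a $\min$ over $y$'s to get $g_x<h+\ep$ with $g_x(x)=h(x)$, then a $\max$ over $x$'s to get $h-\ep<g<h+\ep$) are exactly right, and closedness of $\overline{\sA}$ finishes the proof. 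One could quibble that the affine interpolation $g_{x,y}=\alpha\phi+\beta$ silently uses that a subalgebra of $\bC(\mU;\mR)$ is in particular an $\mR$-linear subspace, but you state this convention explicitly when deducing that all constants lie in $\sA$, so the argument is airtight.
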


A set $\sA\subset \bC(\mU; \mR)$ is said to separate points in $\mU$ if, for any $u\ne u'\in \mU$, there exists at least one element $g\in \sA$ such that $g(u)\ne g(u')$. The set $\sA$ may or may not be countable.

Now, for any positive integer $N$, let ${{\mathsf B}_{\mRp}(\mathbf{0},N)}$ denote the {\it open} ball in $\mRp$ centered at the origin and of radius $N$, and define  $\mU_N:=(\mTx\times {{\mathsf B}_{\mRp}(\mathbf{0},N)})\subset(\mTx\times\mRp)$. We then have the following natural consequence of the Stone--Weierstrass theorem.
\begin{proposition}\label{separ-append-compact}Let $\bC(\overline{\mU_N})$   denote the space of all continuous functions that are {\it defined} on the compact domain $\overline{\mU_N}$. Then, $\bC(\overline{\mU_N})$  is separable with respect to  the $\bC$ norm.
\end{proposition}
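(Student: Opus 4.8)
The plan is to apply the Stone--Weierstrass theorem to the compact Hausdorff space $\overline{\mU_N}=\mTx\times\overline{{\mathsf B}_{\mRp}(\mathbf{0},N)}$: first I would exhibit a point-separating subalgebra that is dense in $\bC(\overline{\mU_N};\mR)$, and then thin it down to a \emph{countable} dense subset by restricting the coefficients to $\mQ$. To begin, I would record that $\overline{\mU_N}$ is a compact metric (hence Hausdorff) space, being the product of the compact torus $\mTx$ with the closed ball $\overline{{\mathsf B}_{\mRp}(\mathbf{0},N)}$, so that the theorem quoted above applies.

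Next I would introduce the family of generators
\[
G:=\{\cos(\mathbf{k}\cdot\bx),\ \sin(\mathbf{k}\cdot\bx):\mathbf{k}\in\mZ^3\}\cup\{v_1,v_2,v_3\},
\]
where $v_1,v_2,v_3$ denote the coordinate functions $\bv=(v_1,v_2,v_3)$ on the ball, and let $\sA$ be the algebra of all finite real-linear combinations of finite products of elements of $G$. Each element of $\sA$ is a genuine continuous function on $\overline{\mU_N}$: the trigonometric factors descend to the torus because they are $2\pi$-periodic in $\bx$, while the polynomial factors in $\bv$ are continuous on the closed ball. The key point to check is that $G$ uses trigonometric modes in $\bx$ rather than coordinate polynomials, and this is precisely where the only genuine (if modest) obstacle lies.

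I would then verify the three hypotheses of Stone--Weierstrass. The algebra $\sA$ contains the nonzero constant $1=\cos(\mathbf{0}\cdot\bx)$ and is closed under sums and products by construction. For separation of points: given $(\bx,\bv)\neq(\bx',\bv')$ in $\overline{\mU_N}$, either the torus coordinates differ, in which case some mode $\cos(\mathbf{k}\cdot\bx)$ or $\sin(\mathbf{k}\cdot\bx)$ distinguishes them, or else $\bv\neq\bv'$, in which case some coordinate function $v_i$ does. The torus claim is the elementary fact that two points of $\mTx$ not identified modulo $(2\pi\mZ)^3$ differ in at least one coordinate and are therefore separated by a one-dimensional mode $\cos(x_i)$ or $\sin(x_i)$, since $(\cos,\sin)$ is injective on a fundamental period. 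Hence $\sA$ separates points, and Stone--Weierstrass yields that $\sA$ is dense in $\bC(\overline{\mU_N};\mR)$ in the sup-norm.

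Finally, to obtain separability I would pass to the subalgebra $\sA_{\mQ}$ of finite $\mQ$-linear combinations of finite products of elements of $G$, with $\mathbf{k}$ ranging over $\mZ^3$. Since $\mQ$ and $\mZ^3$ are countable, $\sA_{\mQ}$ is countable; and since $\mQ$ is dense in $\mR$, the set $\sA_{\mQ}$ is sup-norm dense in $\sA$, hence dense in $\bC(\overline{\mU_N};\mR)$. This exhibits a countable dense subset and completes the proof. As noted, the only nonroutine input is handling the periodicity in $\bx$ by replacing coordinate polynomials with the trigonometric system; everything else is a direct application of Stone--Weierstrass together with the countability of rational approximations.
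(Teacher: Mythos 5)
Your proof is correct and follows essentially the same route as the paper: apply Stone--Weierstrass on the compact set $\overline{\mU_N}$ to a point-separating algebra built from trigonometric functions in $\bx$ and the coordinate functions in $\bv$, then pass to the countable subalgebra with rational coefficients. The only difference is that you take all lattice modes $\cos(\mathbf{k}\cdot\bx),\sin(\mathbf{k}\cdot\bx)$, $\mathbf{k}\in\mZ^3$, as generators where the paper uses just the six one-dimensional modes $\sin(x_i),\cos(x_i)$ --- a harmless enlargement, since your separation argument reduces to those one-dimensional modes anyway.
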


\begin{proof}
Our proof consists of two steps.

\textbf{\textit{Step 1.}} In this step we construct an uncountable dense subset $\sA$ of $\bC(\overline{\mU_N})$. Clearly, $\bC(\overline{\mU_N})$  is an algebra over the field $\mR$. By the Stone--Weierstrass theorem, we need at least one nonzero constant function $1\in\sA$, and some other elements to ``separate'' point-pairs in $\mU_N$. To this end, consider two different points $u=(x_1,x_2,x_3,v_1,v_2,v_3)$, $u'= (x_1',x_2',x_3',v_1',v_2',v_3')$. If     $v_1\ne v_1'$, then  obviously $g(u)=v_1$ ``separates''  $u$ and $u'$. So we include $v_1\in\sA$, and likewise for $v_2$ and $v_3 $. If  $x_1\ne x_1'$ as elements of $\mT=\mR/(2\pi\mZ)$, then $\sin(x_1)$ and $\cos(x_1)$ together  ``separate''  $u$ and $u'$. Otherwise, having both \(\sin(x_1)=\sin(x_1')$ and   $\cos(x_1)=\cos(x_1')\) would imply that
\[
0=(\sin(x_1)-\sin(x_1'))^2+(\cos(x_1)-\cos(x_1'))^2=2-2\cos(x_1-x_1'),
\]
which would mean that $x_1$ and $x_1'$ are identical elements of $\mT$, thus contradicting the assumption that $x_1 \neq x_1'$ as elements of $\mT$. Thus, we include $\sin(x_1)$ and $\cos(x_1)$ in $\sA$, and likewise for the $x_2$ and $x_3$ coordinates.

In summary, the smallest subalgebra $\sA$ of $\bC(\overline{\mU_N})$  that contains
\[
\{1,v_1,v_2,v_3,\sin(x_1),\cos(x_1),\sin(x_2),\cos(x_2),\sin(x_3),\cos(x_3)\}
\]
is dense in $\bC(\overline{\mU_N})$. However, this algebra is over the field $\mR$, and therefore it is not countable.

{\textbf{\textit{Step 2.}}} Let $\sA_{\mQ}$ therefore be the smallest algebra over the field $\mQ$ of rational numbers that contains the above 10 functions. Since $\mQ$ is dense in $\mR$, and the above 10 functions are clearly bounded over the {\it compact} domain $\overline{\mU_N}$, we have that $\sA_{\mQ}$ is dense in $\sA$ and thus also dense in $\bC(\overline{\mU_N})$ . Therefore, we have shown that $\bC(\overline{\mU_N})$  is separable with respect to  the $\bC$ norm and hence the proof is complete.
\end{proof}

We also need the following proposition.
\begin{proposition}\label{separ-append-subspace} Given a separable \emph{metric} space $\bX$,
 any set $\sfS\subset \bX$ has a countable subset that is dense in $\sfS$ with respect to the $\bX$ metric.
\end{proposition}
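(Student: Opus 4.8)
The plan is to use the separability of $\bX$ to manufacture a countable ``base'' of open balls, and then to select one representative point of $\sfS$ from each such ball that meets $\sfS$; the resulting countable subset of $\sfS$ will be shown to be dense in $\sfS$. The only mild subtlety is that the given countable dense subset of $\bX$ need not have any points in common with $\sfS$, so one cannot simply intersect it with $\sfS$; the remedy is to approximate points of $\sfS$ through the balls of the base rather than through the dense points of $\bX$ themselves.

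Concretely, write $d$ for the metric on $\bX$ and let $\{x_n\}_{n\ge1}$ be a countable dense subset of $\bX$, which exists by hypothesis. For each pair of positive integers $(n,m)$, consider the open ball $B(x_n,1/m):=\{y\in\bX : d(x_n,y)<1/m\}$; this is a countable family of balls, being indexed by $\mathbb{N}\times\mathbb{N}$. Whenever $B(x_n,1/m)\cap\sfS\neq\varnothing$, fix (by countable choice) a point $s_{n,m}\in B(x_n,1/m)\cap\sfS$, and let $\sfS_0$ denote the collection of all points so chosen. Since $\sfS_0$ is indexed by a subset of $\mathbb{N}\times\mathbb{N}$, it is a countable subset of $\sfS$.

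It then remains to check that $\sfS_0$ is dense in $\sfS$. Given any $s\in\sfS$ and any $\varepsilon>0$, I would choose $m$ with $2/m<\varepsilon$. By density of $\{x_n\}_{n\ge1}$ in $\bX$ there is an index $n$ with $d(s,x_n)<1/m$, so that $s\in B(x_n,1/m)\cap\sfS$; in particular this intersection is nonempty, whence the representative $s_{n,m}$ was indeed selected and satisfies $d(x_n,s_{n,m})<1/m$. The triangle inequality then yields $d(s,s_{n,m})\le d(s,x_n)+d(x_n,s_{n,m})<2/m<\varepsilon$. Thus every point of $\sfS$ lies in the $\bX$-closure of $\sfS_0$, which is exactly the assertion that $\sfS_0$ is dense in $\sfS$.

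There is no genuine obstacle here: the argument is the standard proof that separability is hereditary in metric (indeed, second-countable) spaces, and all steps are elementary once the countable family of balls has been introduced. The one point worth emphasizing is the order of quantifiers in the construction --- selecting the representatives $s_{n,m}$ \emph{before} fixing $s$ and $\varepsilon$ --- which is precisely what keeps the chosen set countable while still guaranteeing density.
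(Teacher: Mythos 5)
Your proposal is correct and follows essentially the same argument as the paper: both select one representative of $\sfS$ from each nonempty intersection of $\sfS$ with a ball of radius $1/k$ centered at a point of the countable dense subset of $\bX$, and both conclude density via the triangle inequality, giving the distance bound $2/k$. There is no substantive difference between the two proofs.
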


\begin{proof}
To prove this, let $d(\ccdot,\ccdot)$ be the metric of $\bX$, and let $\{a_n\}_{n\geq 1}$ be dense in $\bX$.  Let ${\mathsf B}(a_n,{{1\over k}})$ be the open ball centered at $a_n$ of radius ${1\over k}$ in the $\bX$ metric. We then have countably many sets of the form
\[\sfS\cap {\mathsf B}\bigg(a_n,{{1\over k}}\bigg),\quad n,k\in\mathbb{N}.\]
For any nonempty such set, we pick a ``representative'' \(a_{n,k}\in \sfS\cap {\mathsf B}(a_n,{{1\over k}})\).
We claim that the countable set
\[\bigg\{a_{n,k}\in \sfS\cap {\mathsf B}\bigg(a_n,{{1\over k}}\bigg)\,\Big|\, \sfS\cap {\mathsf B}\bigg(a_n,{{1\over k}}\bigg)\ne\emptyset,\,{n,k\in\mathbb{N}}\bigg\}\subset \sfS\]
is dense in $\sfS$. Indeed, given any $k\in\mathbb{N}$ and $b\in \sfS$, by the   density of $\{a_n\}_{n \geq 1}$, we can find an $a_n$ such that
\[
d(a_n,b)<{1\over k}\implies b\in {\mathsf B}\bigg(a_n,{{1\over k}}\bigg).
\]
Hence, $b\in\sfS\cap {\mathsf B}(a_n,{{1\over k}})\ne\emptyset$, so the  ``representative'' $a_{n,k}\in\sfS\cap {\mathsf B}(a_n,{{1\over k}})$ exists. Now, since $b$ and $a_{n,k}$ are both contained in the ball ${\mathsf B}(a_n,{{1\over k}})$, it follows that    $d(a_{n,k},b)<2/k$. Thus, by the arbitrariness of $k$, we complete the proof.
\end{proof}

We are ready to state and prove the main proposition of this appendix.
\begin{proposition}\label{separ-append1}For any $T_1<T_2$,
the  space $\bC_c^1([T_1,T_2]\times\mTx\times\mRp)$ is separable with respect to  the $\bC$ norm.
\end{proposition}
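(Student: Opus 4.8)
The plan is to reduce the claim to the compact setting already treated in Proposition~\ref{separ-append-compact} by exploiting the compact support of the test functions, and then to assemble the full space as a countable union of separable pieces. The only structural difficulty is that the velocity domain $\mRp$ is not compact; however, every $g\in\bC_c^1([T_1,T_2]\times\mTx\times\mRp)$ vanishes for all sufficiently large $|\bv|$, so its support is contained in a bounded slab in the $\bv$ variable, and this is precisely what makes the reduction possible.

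First I would introduce, for each positive integer $N$, the compact metric space
\[
\mathcal V_N:=[T_1,T_2]\times\mTx\times\overline{\mathsf B_{\mRp}(\mathbf 0,N)},
\]
which is compact as a product of compact spaces, and observe that $\bC(\mathcal V_N)$ is separable with respect to the $\bC$ norm. This follows by exactly the Stone--Weierstrass argument of Proposition~\ref{separ-append-compact}: one simply adjoins the coordinate function $t$ to the finite separating family $\{1,v_1,v_2,v_3,\sin x_i,\cos x_i\}$, which then separates points of $\mathcal V_N$ (the interval $[T_1,T_2]$ being a compact subset of $\mR$), so that the algebra over $\mQ$ generated by these finitely many bounded functions is a countable dense subset.

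Next I would set
\[
\sfS_N:=\big\{g\in\bC_c^1([T_1,T_2]\times\mTx\times\mRp)\ :\ g(t,\bx,\bv)=0\ \text{whenever}\ |\bv|\ge N\big\},
\]
so that $\bigcup_{N\ge1}\sfS_N=\bC_c^1([T_1,T_2]\times\mTx\times\mRp)$, since any element of the latter space has compact, hence bounded, support in $\bv$. The key observation is that the restriction map $g\mapsto g\big|_{\mathcal V_N}$ sends $\sfS_N$ into $\bC(\mathcal V_N)$ and is an \emph{isometry} for the $\bC$ norm: because each $g\in\sfS_N$ is supported in $\{|\bv|\le N\}\subset\mathcal V_N$, its supremum over the whole (unbounded) domain is attained already on $\mathcal V_N$, whence $\|g\|_{\bC([T_1,T_2]\times\mTx\times\mRp)}=\|g|_{\mathcal V_N}\|_{\bC(\mathcal V_N)}$; this map is also injective, since two elements of $\sfS_N$ that agree on $\mathcal V_N$ vanish everywhere off $\mathcal V_N$ and hence coincide. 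Thus $\sfS_N$ may be regarded as a subset of the separable metric space $\bC(\mathcal V_N)$, and Proposition~\ref{separ-append-subspace} yields a countable subset $D_N\subset\sfS_N$ that is dense in $\sfS_N$ with respect to the $\bC$ norm.

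Finally I would take $D:=\bigcup_{N\ge1}D_N$, which is countable. Given any $g\in\bC_c^1([T_1,T_2]\times\mTx\times\mRp)$ and $\ep>0$, there is an $N$ with $g\in\sfS_N$, and by density of $D_N$ in $\sfS_N$ there is $h\in D_N\subset D$ with $\|g-h\|_{\bC([T_1,T_2]\times\mTx\times\mRp)}<\ep$; hence $D$ is dense, and the space is separable. I expect the main point to get right to be exactly this matching of the global and truncated $\bC$ norms via compact support, together with the countable-union step handling the noncompactness of $\mRp$; once the slab decomposition $\bigcup_N\sfS_N$ is in place, the separability of each piece is inherited from Proposition~\ref{separ-append-compact} and the subset result Proposition~\ref{separ-append-subspace}, and nothing further is required.
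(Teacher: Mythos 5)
Your proof is correct and takes essentially the same route as the paper's: a countable union over velocity-truncated slabs, Stone--Weierstrass on the compact closures (Proposition \ref{separ-append-compact}), and the subset result (Proposition \ref{separ-append-subspace}) applied to each piece. The only differences are presentational: you handle the time variable and the $\bC^1$ condition directly, by adjoining the coordinate function $t$ to the separating family and observing that restriction to the slab is an isometry, whereas the paper disposes of these via preliminary reductions and the remark that the argument with the extra variable is identical.
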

\begin{proof}
By Proposition \ref{separ-append-subspace}, it suffices to prove the separability of $\bC_c^1(\mR\times\mTx\times\mRp)$. Since for functions defined on $\mR\times\mTx\times\mRp$ the proof is identical to the one in the case of functions defined on $\mTx\times\mRp$, for the sake of simplicity we shall only show that the space $\bC_c^1( \mTx\times\mRp)$ is separable with respect to the $\bC$ norm. To this end, according to Proposition \ref{separ-append-subspace} again, it suffices to show that:
\be\label{sep:x:p}\text{The  space $\bC_c( \mTx\times\mRp)$ is separable with respect to  the $\bC$ norm.}\ee

For any  positive integer $N$, the space $\bC_c(\mU_N)$ can be regarded as a subspace of $\bC(\overline{\mU_{N}})$. Then, by combining Propositions \ref{separ-append-compact} and \ref{separ-append-subspace}, we obtain that $\bC_c(\mU_N)$ has a countable  subset that is dense in $\bC_c(\mU_N)$ with respect to the $\bC$ norm.

On the other hand, any element of $\bC_c(\mU_N)$ can be naturally extended from  $\mU_N$ to the whole of $\mTx\times\mRp$ and
it can be therefore viewed as an element of $\bC_c( \mTx\times\mRp)$. Hence,
\[\bC_c( \mTx\times\mRp)= \bigcup_{N \geq 1}\bC_c(\mU_N).\]
By a countability argument and the previous step we then deduce \eqref{sep:x:p}.
\end{proof}

\end{document}